\documentclass[a4paper, 11pt]{article}
\usepackage[english]{babel}
\usepackage[width=16cm, top=2.5cm, bottom=2cm]{geometry}

\usepackage{tikz}
\usepackage{gauPDE}
\usepackage{amssymb}
\usepackage{amsmath}
\usepackage{stmaryrd}
\usepackage{empheq}
\usepackage[mathcal]{euscript}
\usepackage{mathrsfs}

\usepackage[shortlabels]{enumitem}
\setlist[enumerate, 1]{label=(\roman*)}
\setlist[enumerate, 2]{label=(\alph*)}
\setlist[enumerate, 3]{label=(\arabic*)}

\usepackage{amsthm}
\theoremstyle{plain}
\newtheorem{theorem}{Theorem}[section]
\theoremstyle{definition}
\newtheorem{definition}[theorem]{Definition}
\theoremstyle{plain}
\newtheorem{lemma}[theorem]{Lemma}
\theoremstyle{plain}
\newtheorem{proposition}[theorem]{Proposition}
\theoremstyle{plain}
\newtheorem{corollary}[theorem]{Corollary}
\theoremstyle{definition}
\newtheorem{example}[theorem]{Example}
\theoremstyle{remark}
\newtheorem{remark}[theorem]{Remark}
\theoremstyle{definition}
\newtheorem{assumption}[theorem]{Assumption}

\usepackage[backend=biber, style=alphabetic, giveninits, maxnames=6, maxalphanames=4, isbn=false, url=false, alldates=year]{biblatex}
\usepackage{csquotes}

\usepackage[colorlinks, linkcolor=blue, citecolor=blue!50!cyan]{hyperref}

\SetSymbolFont{stmry}{bold}{U}{stmry}{m}{n}
\allowdisplaybreaks

\newcommand{\ve}{\varepsilon}
\newcommand{\Om}{\Omega} 
\newcommand{\n}{\mathrm{n}} 
\newcommand{\calM}{\mathcal{M}} 
\newcommand{\calX}{\mathcal{X}} 
\newcommand{\rangeM}{\mathcal{C}} 
\newcommand{\HHn}{\HH_\n}

\newcommand{\boldXi}{\boldsymbol{\Xi}} 
\newcommand{\CE}{\mathcal{C}\mathcal{E}} 
\newcommand{\calW}{\mathcal{W}} 
\newcommand{\calD}{\mathcal{D}} 

\newcommand{\calF}{\mathcal{F}} 
\newcommand{\calU}{\mathcal{U}} 
\newcommand{\calV}{\mathcal{V}} 
\newcommand{\scrE}{\mathscr{E}} 
\newcommand{\scrR}{\mathscr{R}} 
\newcommand{\calE}{\mathcal{E}} 
\newcommand{\calEmin}{\mathcal{E}_\mathrm{min}} 
\newcommand{\Fconc}{F_\mathrm{conc}} 
\newcommand{\Fconv}{F_\mathrm{conv}} 
\newcommand{\Faff}{F_\mathrm{aff}} 

\newcommand{\boldN}{\boldsymbol{\mathrm{N}}} 
\newcommand{\boldu}{\boldsymbol{u}} 
\newcommand{\boldv}{\boldsymbol{v}} 
\newcommand{\boldj}{\boldsymbol{j}} 
\newcommand{\boldzeta}{\boldsymbol{\zeta}} 
\newcommand{\boldeta}{\boldsymbol{\eta}} 

\newcommand{\frakm}{\mathfrak{m}} 
\newcommand{\taumin}{\tau_\mathrm{min}} 
\newcommand{\taumax}{\tau_\mathrm{max}} 
\newcommand{\Amax}{A_*} 

\newcommand{\olbfu}{\overline{\boldsymbol{u}}} 
\newcommand{\olz}{\overline{z}}
\newcommand{\olu}{\overline{u}}
\newcommand{\ulbfu}{\underline{\boldsymbol{u}}} 
\newcommand{\ulz}{\underline{z}}
\newcommand{\ulu}{\underline{u}}
\newcommand{\whz}{\widehat{z}} 
\newcommand{\wtbfu}{\widetilde{\boldsymbol{u}}} 
\newcommand{\wtu}{\widetilde{u}}
\newcommand{\wtz}{\widetilde{z}}
\newcommand{\wtj}{\boldsymbol{\tilde{\jmath}}} 
\newcommand{\olj}{\boldsymbol{\bar{\jmath}}}
\newcommand{\kappaell}{{\kappa_\ell}}

\title{Dissipative solutions for viscoelastic phase separation with degenerate mobility: a dynamic transport approach}
\author{
	Moritz Immanuel Gau\footnote{Weierstrass Institute for Applied Analysis and Stochastics, Anton-Wilhelm-Amo-Str.\ 39, 10117 Berlin, Germany, \\\texttt{moritzimmanuel.gau@wias-berlin.de}; \;\texttt{katharina.hopf@wias-berlin.de}; \;\texttt{matthias.liero@wias-berlin.de}} \and Katharina Hopf$^*$ \and Matthias Liero$^*$
}
\numberwithin{equation}{section}
\date{}

\begin{document}
\maketitle

\begin{abstract}
	We study the convergence of minimising movement approximations to weak solutions of a fourth-order viscoelastic phase separation system with degenerate concave mobility.
	The underlying semi-implicit variational scheme combines a dynamic transport distance for the phase-field variable with a weighted $\LL^2$-distance for a transformed stress variable.
	We establish subsequential convergence of the scheme and show that the limiting solutions satisfy the energy-dissipation inequality.
	To this end, we derive a coercivity estimate for the metric slope, which yields the required space-time regularity for the De\,Giorgi interpolant, and construct constitutive fluxes whose dissipation is controlled by the squared metric slope.
	These estimates allow us to pass to the limit in the discrete energy-dissipation inequality.
	The weak formulation of the evolution equation for the phase-field variable is derived using the flow interchange technique.
	Under additional assumptions on the data, we further show that the limiting solutions satisfy an entropy-dissipation inequality.
\end{abstract}

\emph{Keywords: }Cahn–Hilliard equation, viscoelastic phase separation, gradient-flow structure, minimising movements scheme, degenerate mobility, generalized Wasserstein distance

\emph{MSC 2020: }
35K65, 
35A15, 
49Q22, 
35K41, 
49J45, 
35A01, 
76A10

\section{Introduction}

Viscoelastic phase separation (VPS) in polymer mixtures has attracted increasing attention in recent years; we refer to~\cite{Tana2000VPS,ZhouZhangE06,BDEHLS2021AVPS,Tana2022VPSB,CL2023PSRV} and the references therein.
It describes phase separation in binary fluids whose components evolve on different characteristic time and length scales.
In polymer solutions, this asymmetry reflects the contrast between solvent diffusion and polymer relaxation.
The resulting viscoelastic stresses influence the phase-separation process, which can feature a transient polymer network that later relaxes and breaks into smaller clusters.

For $d\geq 1$ let $\Omega\subset\R^d$ denote a bounded, smooth, convex domain.
We consider VPS models of the form
\begin{PDEsystem}\label{eq:VPSoriginal}
	\PDElineIn{\pa_tu}{-\divv(\boldj)}{(0,\infty)\times\Om},\\
	\PDElineIn{\pa_tq}{-\tfrac{1}{\tau(u)}q+A(u)\divv(\boldj)}{(0,\infty)\times\Om},\\
	\PDElineIn{\boldj}{-m(u)\nabla\bigparen{-\Delta u+f(u)-A(u)q}}{(0,\infty)\times\Om},
\end{PDEsystem}
which are variants of the purely dissipative models proposed in~\cite[Section III]{ZhouZhangE06} to describe VPS in polymeric mixtures.
The system is complemented by no-flux and homogeneous Neumann boundary conditions (see \eqref{eq:boundCondU}).
Here, $u:[0,\infty)\times\Om\to[0,\infty)$ denotes the phase-field variable (e.g.\ the concentration of polymers in a polymer solution) satisfying the physical constraints $0\leq u(t,x)\leq M$, where $M\in(0,+\infty]$.
In particular, the case $M=+\infty$ allows for unbounded phase-field variables, whereas finite $M$ corresponds to a volume fraction constraint.
The scalar variable $q:[0,\infty)\times\Om\to\R$ models the spherical part of the stress tensor.
Moreover, the mobility $m:\overline{(0,M)}\to\R_+$ is a non-negative concave function such that $m(0)=0$ and $m(M)=0$ if $M<+\infty$.
We write $f=F'$, where $F:\overline{(0,M)}\to\R$ is the free energy potential, while $A:\overline{(0,M)}\to\R$ and $\tau:\overline{(0,M)}\to\R_+$ are the bulk modulus and the relaxation time of the bulk stress, respectively, which couple the evolution of $u$ and $q$.
The system satisfies mass conservation $\int_\Om u(t,x)\dd x=\frakm \Leb^d(\Om)$ for some fixed $\frakm\in(0,M)$.

System~\eqref{eq:VPSoriginal} has a formal gradient-flow structure with driving energy
\[\scrE(u,q):=\int_\Om\Bigparen{\frac12\abs{\nabla u}^2+F(u)}\dd x+\frac12\int_\Om q^2\dd x.\]
The first term is the usual Cahn--Hilliard energy, while the second describes the contribution of the bulk stress.
The associated dual dissipation potential is
\[\scrR^*(u;\xi,\eta):=\frac12\int_\Om m(u)\abs{\nabla(\xi-A(u)\eta)}^2\dd x +\frac12\int_\Om\frac{\eta^2}{\tau(u)}\dd x.\]
With this choice, the system \eqref{eq:VPSoriginal} can formally be rewritten as 
\[\dot u = \D_{\xi}\scrR^*(u,-\D\scrE(u,q)) \quad\text{and}\quad \dot q = \D_{\eta}\scrR^*(u,-\D\scrE(u,q)).
\]
Formal sharp-interface asymptotics for related VPS models with a double-obstacle potential were studied in~\cite{HKMW_2026}, leading to non-local lower-order counterparts of the surface diffusion flow, with the gradient-flow structure of the diffuse-interface model reflected in that of the resulting geometric evolution law.

The purpose of the present work is to study the convergence of variational approximations for system~\eqref{eq:VPSoriginal} with a class of degenerate concave mobilities.
Our approach is based on a semi-implicit minimising movement scheme that combines dynamic transport for the phase-field variable with a weighted $\LL^2$-distance for a transformed stress variable.
A specific aim is to show that the limiting solutions satisfy the energy-dissipation inequality, ensuring their consistency with the thermodynamic structure of the model.

Our construction builds on two related works.
For degenerate Cahn--Hilliard and thin-film equations, Lisini, Matthes and Savar\'e~\cite{LisiniMatthesSavare2012} constructed weak solutions by minimising movements with respect to dynamic transport metrics associated with nonlinear mobilities.
A notable feature of this construction is that the physical bounds on the phase-field variable are preserved already at the discrete level.
For a simplified VPS model with constant mobility, a gradient-flow approach was recently developed by the first two authors in~\cite{GauHopf2025}, where a change of variables leads to an $\HH^{-1}\times \LL^2$-type Hilbertian gradient structure with an almost convexity property of the transformed energy.
This enabled a well-posedness theory for suitably regular initial data, including uniqueness and stability estimates, together with an energy-dissipation balance obtained by means of a chain rule.
In this paper, we use the same change of variables and a dynamic transport metric to account for the degenerate mobility.
A comparable geodesic convexity property for the Dirichlet energy is, however, not available in this transport setting, and a chain rule yielding the energy-dissipation balance does not appear to be known at the regularity of the weak solutions considered here.

To identify a metric structure that is well suited to the variational construction, we consider the change of variables from~\cite{GauHopf2025},
\[(u,q)=\Psi(u,z):=(u,z-K(u)),\qquad K(u):=\int_0^u A(r)\dd r,\]
and pull back the gradient structure of system~\eqref{eq:VPSoriginal} via $\Psi$.
Writing $\boldu=(u,z)$, the transformed energy and dual dissipation potential are given by
\begin{align*}
	\calE(\boldu)&:=\scrE(\Psi(\boldu))=\int_\Om\Bigparen{\frac12\abs{\nabla u}^2+F(u)}\dd x
	+\frac12\int_\Om(z-K(u))^2\dd x,\\
	\mathcal R^*(u;\tilde\xi,\tilde\eta)
	&:=\scrR^*(u;\tilde\xi+A(u)\tilde\eta,\tilde\eta)=\frac12\int_\Om\Bigparen{m(u)\abs{\nabla\tilde\xi}^2+\frac{\tilde\eta^2}{\tau(u)}}\dd x.
\end{align*}
Note that this change of variables renders the dual dissipation potential diagonal in the driving forces, at the expense of introducing a coupling between $u$ and $z$ in the energy.
The evolution equations in the new variables take the form
\begin{PDEsystem}\label{eq:evoTrans}
	\PDElineIn{\pa_tu}{-\divv\boldj}{(0,\infty)\times\Om},\\
	\PDElineIn{\pa_tz}{-\frac{1}{\tau(u)}(z-K(u))}{(0,\infty)\times\Om},\\
	\PDElineIn{\boldj}{-m(u)\nabla\bigl(-\Delta u+f(u)-A(u)(z-K(u))\bigr)}{(0,\infty)\times\Om}.
\end{PDEsystem}
This formulation exhibits the mixed parabolic--ODE structure of the system, coupling a fourth-order Cahn--Hilliard-type equation for $u$ to a relaxation equation for $z$.
In particular, there is no spatial regularisation mechanism in the equation for $z$.

The transport part of $\mathcal R^*$ is associated with a geodesic distance $\calW_m$ for the conserved variable $u$.
Such distances extend the Benamou--Brenier dynamic formulation of optimal transport~\cite{BenamouBrenier2000} to nonlinear concave mobilities and were introduced and developed in~\cite{DolbeaultNazaretSavare2009,CarrilloLisiniSavareSlepcev2010,LisiniMarigonda2010}.
We recall their definition and the properties needed here in Subsection~\ref{subsec:transportMetric}.
The $\calW_m$-minimising movement approach of~\cite{LisiniMatthesSavare2012} has been developed further for more general free energies~\cite{LoiblMatthesZinsl2016,Zinsl2017}, time-dependent mobilities~\cite{Zinsl2019} and systems~\cite{ZinslMatthes2015}.
Since no static Monge--Kantorovich formulation of $\calW_m$ in terms of transport maps or plans is known for general nonlinear
mobilities, the derivation of a weak PDE formulation is more delicate.
In the approach of~\cite{LisiniMatthesSavare2012}, the flow interchange technique introduced in~\cite{MatthesMcCannSavare2009} plays a crucial role not only in obtaining regularity estimates and compactness but also in showing that the limiting curves satisfy the evolution law.

In the present setting, the dependence of $\tau$ on $u$ suggests considering a fully coupled dynamic distance for $(u,z)$.
However, extending the flow interchange arguments to this distance would require establishing evolution variational inequalities for suitable auxiliary flows with respect to that distance (cf.~Section~\ref{ssec:flow-interchange}), which seems difficult in the coupled setting.
As in~\cite{GauHopf2025}, we therefore freeze the relaxation time $\tau$ at the preceding time step and work with the product metric 
\[\calD_v^2\bigparen{(u_0,z_0),(u_1,z_1)}:=\calW_m^2(u_0,u_1)+\bignorm{\sqrt{\tau(v)}(z_0-z_1)}_{\LL^2(\Om)}^2,\]
where $v$ is an admissible phase-field density.
For a time step $\kappa>0$, the resulting minimising movement scheme (MMS) reads
\[\boldu_\kappa^{n+1}\in\argmin_{\boldu\in\calX_\frakm^M\times\LL^2(\Om)}
\biggbraces{\frac{1}{2\kappa}\calD_{u_\kappa^n}^2(\boldu,\boldu_\kappa^n)+\calE(\boldu)},\]
where $\calX_\frakm^M$ denotes the set of densities with the prescribed mass and physical bounds; see Section~\ref{sec:setupAndStrategy}.
In contrast to the classical approach of~\cite{ElliottGarcke1996}, no regularisation of the mobility or the energy is introduced in the minimisation problem and, by construction, the discrete solutions preserve the prescribed mass and physical bounds at every time step.

Under the assumptions stated in Section~\ref{sec:setupAndStrategy}, our main result establishes subsequential convergence, as $\kappa\to0$, of the corresponding time interpolants to global weak solutions of~\eqref{eq:evoTrans} satisfying the energy-dissipation inequality
\[\calE(\boldu(t))+\int_0^t\int_\Om\Bigparen{\frac{\abs{\boldj}^2}{m(u)}
+\frac{\abs{z-K(u)}^2}{\tau(u)}}\dd x\dd s\leq\calE(\boldu^0).\]
Here, $\boldj$ is the constitutive diffusive flux appearing in both the continuity equation and the weak formulation of the last equation in~\eqref{eq:evoTrans}, and $\boldu^0$ denotes the initial state.
For the precise statement, including an entropy-dissipation inequality under additional assumptions, we refer to Theorem~\ref{th:mainTheorem}.
The energy-dissipation inequality reflects the formal gradient-flow structure of the model at the level of weak solutions and provides a starting point for further analysis, including the study of singular limits and weak--strong uniqueness.
Earlier variational constructions for fourth-order equations, such as~\cite{LisiniMatthesSavare2012,LoiblMatthesZinsl2016,Zinsl2017,CancesMatthesNabet2019, CancesMatthes2023, CarrilloEspositoFalcoFernandez2024}, yield weak solutions with non-increasing energy, but do not establish an energy-dissipation inequality of the above form.
To obtain such an inequality, we exploit the discrete energy-dissipation balance associated with the De\,Giorgi interpolant, which provides time-integrated control of the squared metric slope; an approach based on this interpolant is also considered in~\cite{KroemerLaux2022} for a Cahn--Hilliard model with linear mobility.
In the present setting, expressing the dissipation in this balance in terms of the constitutive fluxes and passing to the limit requires a suitable time-integrated $\HH^2(\Om)$-regularity of the phase-field component of the interpolant that is uniform in $\kappa$.
However, the telescoping argument used to obtain this regularity for the piecewise constant interpolant does not directly apply to the De\,Giorgi interpolant, and we instead derive a $\kappa$-uniform $\LL^2(0,T;\HH^2(\Om))$-bound for the phase-field component from a coercivity estimate for the metric slope.
With this regularity at hand, suitable variations in the definition of the slope allow us to construct constitutive fluxes for both interpolants and bound their dissipation by the corresponding squared metric slopes.
Combined with the discrete energy-dissipation balance, these estimates yield an inequality in terms of the constitutive fluxes, which the uniform regularity allows us to preserve in the limit.
An overview of this argument is given in Subsection~\ref{ssec:edi.overview}, and the main ingredients of the proof are outlined after Theorem~\ref{th:mainTheorem}.

Finally, let us mention that global weak solutions for a related VPS model with hydrodynamic coupling and stress diffusion were constructed by Brunk and Luk\'a\v{c}ov\'a-Medvid'ov\'a~\cite{BrunkLukacova2022} using regularisation and compactness arguments.
The present variational construction treats the purely dissipative system~\eqref{eq:VPSoriginal} without stress diffusion.

\paragraph{Outline.}
The paper is organised as follows.
In Section~\ref{sec:setupAndStrategy}, we introduce the minimising movement scheme, define the interpolants, state our main results and explain the strategy of the proofs.
Section~\ref{sec:preliminaries} contains a review of the dynamical transport metric and the flow interchange lemma.
Section~\ref{sec:discreteSolutions} is devoted to the proof of our first main result, which concerns the regularity of the time-discrete interpolants and the approximate system they satisfy.
Finally, we perform the limit passage in the time step parameter in Section~\ref{sec:mainTheorem} which concludes the proof of our second main result.

\section{Main Results}\label{sec:setupAndStrategy}

\subsection{Minimising movement scheme for VPS}
As highlighted in the introduction, for the rigorous analysis of the system in \eqref{eq:VPSoriginal}, it is advantageous to introduce a change of variables $q=z-K(u)$, with $K'=A$, and simplify the underlying variational structure. In particular, we rewrite the system~\eqref{eq:VPSoriginal} equivalently as the following fourth-order parabolic PDE coupled to an ODE, namely
\begin{PDEsystemTag}{\textsf{VPS}}\label{eq:VPS}
	\PDElineIn{\pa_tu}{-\divv(\boldj)}{(0,\infty)\times\Om},\\[0.5em]
	\PDElineIn{\pa_tz}{-\tfrac{1}{\tau(u)}(z{-}K(u))}{(0,\infty)\times\Om},\\[0.5em]
	\PDElineIn{\boldj}{-m(u)\nabla\bigparen{-\Delta u+f(u)-A(u)(z{-}K(u))}}{(0,\infty)\times\Om}.
\end{PDEsystemTag}
The system is complemented by no-flux and homogeneous Neumann boundary conditions, i.e.
\begin{equation}\label{eq:boundCondU}
	\boldj\cdot\n=0,\quad\nabla u\cdot\n=0\qquad\text{on }(0,\infty)\times\boundary{\Om},
\end{equation}
and initial conditions $u(0)=u^0$, $z(0)=z^0$. Here, $\n:\boundary{\Om}\to \mathbb{S}^{d-1}$ denotes the outer normal unit vector.

To set up the minimising movement scheme associated with~\eqref{eq:VPS}, we introduce the notation for the admissible measures and densities
\begin{align*}
	\rangeM&:=[0,M]\text{ if }M<+\infty,\qquad\rangeM:=[0,\infty)\text{ if }M=+\infty,\\
	\calM_\frakm^+(\varcl{\Om})&:=\bigset{\mu\in\calM^+(\varcl{\Om})}{\mu(\varcl{\Om})=\frakm\Leb^d(\Om)},\\
	\calX_\frakm^M&:=\bigset{u\in\LL^1(\Om)}{u\in\rangeM\text{ a.e.\ and }\textstyle\int_\Om u\dd x=\frakm\Leb^d(\Om)},\\
	\HHn^2(\Om)&:=\bigset{u\in\HH^2(\Om)}{\nabla u\cdot\n=0\text{ on }\boundary{\Om}}.
\end{align*}
The energy $\calE:\calX_\frakm^M\times\LL^2(\Om)\to \R_\infty:=\R\cup \{\infty\}$ is given via
\[\calE(\boldu)=\begin{cases}
	\int_\Om \bigparen{{\frac12\abs{\nabla u}^2+F(u)}}\dd x+\frac12\int_\Om(z-K(u))^2\dd x&\text{if }u\in\HH^1(\Omega),\\
	+\infty&\text{otherwise}.
\end{cases}\]
We now fix a time step $\kappa>0$ and choose the initial datum such that $\boldu^0=(u^0,z^0)\in\dom(\calE)$.
Then we construct iteratively $(\boldu_\kappa^n)_{n\in\N_0}$, starting from $\boldu_\kappa^0:=\boldu^0$ and letting
\begin{equation}\label{eq:TIM}
	\boldu_\kappa^{n+1}\in\argmin_{\boldu\in\calX_\frakm^M\times\LL^2(\Om)}\Psi_{\kappa,\boldu_\kappa^n}(\boldu) \quad\text{with}\quad \Psi_{\kappa,\boldu_\kappa^n}(\boldu):=\frac{1}{2\kappa}\calD_{u_\kappa^n}^2(\boldu,\boldu_\kappa^n)+\calE(\boldu).\tag{\textsf{MMS}}
\end{equation}
Here, the phase-dependent (pseudo-)metric $\calD_v$ is defined as
\[\calD_v(\boldu_1,\boldu_2):=\bigparen{\calW_m^2(u_1,u_2)+\bignorm{\sqrt{\tau(v)}(z_1-z_2)}_{\LL^2(\Om)}^2}^{1/2}\]
for $v\in\LL^2(\Om)$, $\boldu_1,\boldu_2\in\calX_\frakm^M\times\LL^2(\Om)$, and the distance $\calW_m$ denotes the dynamic transport metric associated with the mobility $m$. Its definition and properties used below are reviewed in Subsection~\ref{subsec:transportMetric}. The existence of minimisers in \eqref{eq:TIM} is ensured by Lemma~\ref{le:existenceTIM}.

As usual, we use these discrete values to define the piecewise constant forward and backward as well as affine interpolants $\olbfu_\kappa,\ulbfu_\kappa:[0,\infty)\to\calX_\frakm^M\times\LL^2(\Om)$, $\whz_\kappa:[0,\infty)\to\LL^2(\Om)$ as
\begin{equation}\label{eq:piecewiseInterpolants}
	\begin{alignedat}{1}
		&\olbfu_\kappa(t):=\boldu_\kappa^{\lceil t/\kappa\rceil}, \qquad \ulbfu_\kappa(t):=\boldu_\kappa^{\lfloor t/\kappa\rfloor}, \qquad \text{and}\\
		&\whz_\kappa(t):=\bigparen{\tfrac{t}{\kappa}-\bigl\lfloor\tfrac{t}{\kappa}\bigr\rfloor}z_\kappa^{\lceil t/\kappa\rceil}+\bigparen{1-\tfrac{t}{\kappa}+\bigl\lfloor\tfrac{t}{\kappa}\bigr\rfloor}z_\kappa^{\lfloor t/\kappa\rfloor}.
	\end{alignedat}
\end{equation}
In particular, $\olu_\kappa(n\kappa)=\ulu_\kappa(n\kappa)=u_\kappa^n$ and $\olz_\kappa(n\kappa)=\ulz_\kappa(n\kappa)=\whz_\kappa(n\kappa)=z_\kappa^n$ for every $n\in\N_0$. We mainly employ these interpolants to derive the time-discrete PDE.

Finally, as in \cite{ElliottGarcke1996, LisiniMatthesSavare2012}, we introduce the entropy functional $\calU:\calX_\frakm^M\to \R_+$ defined via
\begin{equation}\label{eq:defEntropy}
	\calU(u):=\int_\Omega U(u(x))\dd x,\quad\text{where}\quad U''(u)= \frac{1}{m(u)},\quad U(\frakm)=U'(\frakm)=0.
\end{equation}

\subsection{The energy-dissipation inequality}\label{ssec:edi.overview}
A central aim of our analysis is to show that solutions constructed by the above minimising movement scheme satisfy the energy-dissipation inequality 
\begin{align}\label{eq:edi.intro}
	\calE[u_t,z_t]+\int_0^t\int_\Om\Bigparen{\frac{\abs{\boldj}^2}{m(u)}+\frac{1}{\tau(u)}\abs{z-K(u)}^2}\dd x\dd s\leq\calE[u_0,z_0].\tag{\textsf{EDI}}
\end{align}
Due to the unavailability of a rigorous chain rule that would allow us to conclude~\eqref{eq:edi.intro} for typical weak solutions, we will establish this inequality using the variational nature of the time-discrete scheme. To this end, we introduce the variational interpolant, also known as the \emph{De\,Giorgi interpolant}, which is obtained by setting $\wtbfu_\kappa(n\kappa):=\boldu_\kappa^n$, and choosing, for $0<\theta<\kappa$ and $t:=n\kappa+\theta$,
\begin{equation}\label{eq:deGiorgiInterpolant}
	\wtbfu_\kappa(t)\in\argmin_{\boldu\in\calX_\frakm^M\times\LL^2(\Om)}\Bigparen{\frac{1}{2\theta}\calD_{u_\kappa^n}^2(\boldu,\boldu_\kappa^n)+\calE(\boldu)}.\tag{\textsf{DeG}}
\end{equation}
Note that, while minimisers of~\eqref{eq:deGiorgiInterpolant} may not be unique, we can invoke classical measurable choice theory (see e.g.~\cite[Theorem~6.9.6]{Bogachev_2007}) to ensure the existence of a Borel measurable selection $\wtbfu_\kappa:[0,\infty)\to\calX_\frakm^M\times \overline B_R^{\LL^2}$, for suitable $R=R(\boldu^0)<\infty$, where $\calX_\frakm^M$ is endowed with the strong $\LL^1$-topology and $\overline B_R^{\LL^2}$ with the weak $\LL^2$-topology. Since $\wtbfu_\kappa(t)\in \HH^1(\Om)\times \LL^2(\Om)$ for every $t\ge0$, the Lusin--Souslin theorem~\cite[Theorem~6.8.6]{Bogachev_2007} then implies Borel measurability in the strong topology of $\HH^1(\Om)\times\LL^2(\Om)$.
Fundamental properties of this interpolant are captured in Ambrosio et al.~\cite[Sections 3.1, 3.2]{AmbrosioGigliSavare2008}. In particular, the following discrete energy-dissipation balance is satisfied for every $n\in\N_0:$
\begin{equation}\label{eq:discreteEDB}
	\frac{\calD_{u_\kappa^n}^2(\boldu_\kappa^n,\boldu_\kappa^{n+1})}{2\kappa}+\int_0^\kappa\frac{\calD_{u_\kappa^n}^2(\boldu_\kappa^n,\wtbfu_\kappa(n\kappa+\theta))}{2\theta^2}\dd\theta=\calE(\boldu_\kappa^n)-\calE(\boldu_\kappa^{n+1}).
\end{equation}
By the slope inequality~\cite[Lemma 3.1.3]{AmbrosioGigliSavare2008}
\begin{equation}\label{eq:slopeEstimate}
	\abs{\pa\calE}_{\calD_{u_\kappa^n}}^2(\wtbfu_\kappa(t))\leq\frac{\calD_{u_\kappa^n}^2(\boldu_\kappa^n,\wtbfu_\kappa(t))}{\theta^2},
\end{equation}
the second term on the left-hand side of~\eqref{eq:discreteEDB} dominates the dissipation term involving the metric slope of $\calE$.
However, passing directly to the limit in the slope term in order to derive the abstract EDI, where dissipation is expressed in terms of metric speed and metric slope, would require a suitable lower semi-continuity property, which, to the best of our knowledge, is not available in the present setting.
In this paper, we therefore derive the version~\eqref{eq:edi.intro} of the energy-dissipation inequality, formulated in terms of the PDE fluxes.
We expect this formulation, combined with an entropy-dissipation inequality, to encode sufficient information for a weak--strong uniqueness principle.

\subsection{Data qualification}

The overall assumptions are the following:

\begin{assumption}[Domain]\label{as:domain}
	For the whole text, we assume that $\Om\subset\R^d$, $d\in\N$ is a bounded, convex domain (i.e.\ open and connected) that has a $\CC^l$-boundary for some $l\in\N$ satisfying
	\begin{equation}\label{eq:domainBoundaryRegularity}
		l\geq\frac{(d+2)^2+4}{2(d+2)}.
	\end{equation}
\end{assumption}

\begin{remark}
	Condition~\eqref{eq:domainBoundaryRegularity} is used for regularity results in Lemma~\ref{le:freeEnergyAlongHeatFlow}. In the most relevant cases $1\leq d\leq 3$, \eqref{eq:domainBoundaryRegularity} is satisfied for $l=3$.
\end{remark}

\begin{assumption}[Mobility]\label{as:mobility}
	Assume that $m\in\CC^0(\rangeM)\cap\CC^2((0,M))$ is concave, that $m>0$ on $(0,M)$, $m(0)=0$, and if $M<+\infty$ then additionally $m(M)=0$. Moreover, we assume
	\begin{equation}\label{eq:mobilityLipschitzConditions}
		\sup_{u\in(0,M)}\abs{m'(u)}<\infty\quad\text{and}\quad\sup_{u\in(0,M)}(-m''(u)m(u))<\infty.
	\end{equation}
\end{assumption}

In the case $M<+\infty$, we note that $m$ attains its global maximum in $(0,M)$. Concavity and non-negativity moreover imply
\begin{equation}\label{eq:mobilityGrowthEstimates}
	m(u)\geq\begin{cases}
		\frac{m(\frakm)}{\frakm}u&\text{if }u\leq\frakm,\\
		\frac{m(\frakm)}{M-\frakm}(M-u)&\text{if }u\geq\frakm\text{ and }M<+\infty,\\
		m(\frakm)&\text{if }u\geq\frakm\text{ and }M=+\infty.
	\end{cases}
\end{equation}

Besides positivity and concavity of $m$, which are required for $\calW_m$ to be well-defined, we also assume that $m$ is degenerate and satisfies the conditions~\eqref{eq:mobilityLipschitzConditions}, which imply in particular that $m$ vanishes linearly at the finite endpoints of $(0,M)$. Both properties play a crucial role in constructing the flow map associated with the so-called \emph{potential energy}, which we define later in~\eqref{eq:defPotentialEnergy}.

\begin{assumption}[Potential]\label{as:potential}
	We assume $F\in\CC^2(0,M)$ subject to
	\begin{subequations}\label{eq:potentialBounds}
		\begin{align}
			\abs{F(u)}&\leq C_1(1+u^\gamma),\label{eq:potentialUpperBound}\\
			m(u)F''(u)&\geq-c_1(1+m(u))\quad\text{for all }u\in(0,M)\label{eq:potentialLowerBound}
		\end{align}
	\end{subequations}
	for some constants $c_1,C_1\in(0,\infty)$ and an exponent $\gamma\in[1,\infty)$ with $\gamma\leq\frac{2d}{d-4}$ if $d\geq5$. With the \emph{pressure} $P:(0,M)\to\R$ defined by $P(u):=\int_\frakm^um(v)F''(v)\dd v$, we require a continuous extension $P\in\CC^0(\rangeM)$. In the case $M=+\infty$, we further impose the asymptotic decay
	\begin{equation}\label{eq:asymptoticBehaviorP}
		\lim_{u\to\infty}\frac{P(u)}{u^\omega+\abs{F(u)}}=0.
	\end{equation}
	for an exponent $\omega\in(2,\infty)$ satisfying $\omega\leq\min\{4\frac{d-1}{d-2},2\frac{d+2}{d-2}\}$ whenever $d\geq3$.
\end{assumption}

Observe that \eqref{eq:potentialLowerBound} can be simplified in the case $M<+\infty$ to $\inf_{u\in(0,M)}u(M{-}u)F''(u)>-\infty$, and \eqref{eq:potentialUpperBound} holds trivially.

Following~\cite[Subsection 2.4]{LisiniMatthesSavare2012}, $F$ decomposes into continuous convex, concave and affine parts $\Fconv,\Fconc,\Faff\in\CC^0(\rangeM)$, which satisfy for a constant $c_2\in(0,\infty)$:
\begin{equation}\label{eq:potentialDecompositionBounds}
	\Fconv(u)\geq\Fconv(\frakm)=0=\Fconc(\frakm)\geq\Fconc(u)\geq-c_2(1+u^2)\quad\text{for all }u\in\rangeM.
\end{equation}
Since $\Fconv$, $\Fconc$, $\Faff$ are continuous, this implies $F=\Fconv+\Faff+\Fconc\in\CC^0(\rangeM)$ as well.

We highlight that the constraints on $\omega$ guarantee the embedding
\begin{equation}\label{eq:interpolationExponentq}
	\LL^2(I;\HH^2(\Om))\cap\LL^\infty(I;\HH^1(\Om))\hookrightarrow\LL^\omega(I;\LL^\omega(\Om))
\end{equation}
for any compact interval $I\subset[0,\infty)$.

\begin{assumption}[Bulk modulus, relaxation coefficient]\label{as:coefficients}
	We assume $A,\tau\in\CC^0(\rangeM)$, and there are positive reals $\Amax$, $\taumax$, $\taumin$ such that
	\begin{align}
		\abs{A(u)}&\leq\Amax,\label{eq:assumptionA}\\
		\taumin\leq\tau(u)&\leq\taumax\qquad\text{for all }u\in\rangeM.\label{eq:assumptionTau}
	\end{align}
	In particular, the primitive $K(u):=\int_0^uA(r)\dd r$ is Lipschitz continuous.
\end{assumption}

\begin{remark}
	While condition~\eqref{eq:assumptionTau} is `optimal' for ensuring that the induced weight $\tau(u)$ yields an $\LL^2(\Om)$-scalar product equivalent to the standard one, it is possible to relax the boundedness~\eqref{eq:assumptionA} to allow more general polynomial growth. However, to carry out our analytical arguments, we would need to require the initial data $z^0$ to belong to some $\LL^q$-space with $q>2$, and some estimates would become more elaborate.

	Consider, for instance, the integral involving the decomposed term $A(u)(z-K(u))\Delta u$, which arises in the proof of Lemma~\ref{le:freeEnergyAlongHeatFlow}. One would need to estimate $A(u)K(u)\Delta u$ by a product of powers of $\HH^1$- and $\HH^2$-norms, so as to absorb the $\HH^2$-contribution with the appropriate exponent. In addition, for the mixed term $A(u)z\Delta u$, one would need to track the regularity of $z$ propagated from $z^0$ through the (discrete) ODE, similarly as in Step~3 of the proof of Theorem~\ref{th:mainTheorem}. All these estimates rely fundamentally on Sobolev embeddings for $\HH^1(\Om)$ and $\HH^2(\Om)$, leading to numerous dimension-dependent case distinctions. To streamline the presentation, we retain the uniform bound~\eqref{eq:assumptionA}.
\end{remark}

For the strong convergence of the variable $z$ and the limit passage in the entropy-dissipation inequality we make additional assumptions.

\begin{assumption}[Extra regularity]\label{as:entropyInequality}
	In addition to Assumptions~\ref{as:domain}, \ref{as:mobility}, \ref{as:potential} and~\ref{as:coefficients}, we impose
	\begin{enumerate}
		\item either $M<+\infty$ or ($M=+\infty$ and $d\leq3$).
		\item\label{it:potentialConcavePartVanishing} $\lim_{u\to a}m(u)\Fconc''(u)=0$ for $a\in\{0,M\}\setminus\{+\infty\}$.
		\item $\tau$ and $K/\tau$ are Lipschitz continuous.
	\end{enumerate}
\end{assumption}

\begin{example}[Typical choices for $m$, $F$, $A$ and $\tau$]
	In the case $M<+\infty$, the paradigmatic choice is the mobility
	\[m(u)=u(M-u)\]
	and the Flory--Huggins potential
	\[F(u)=\frac{1}{n_p}u\log(u)+\frac{1}{n_s}(M{-}u)\log(M{-}u)+\beta u(M{-}u),\]
	where $n_p,n_s>0$ are the molecular weights for the polymer and solvent molecules, and $\beta>0$ the Flory interaction parameter~\cite{ZhouZhangE06}. Semi-convex polynomial potentials such as the classical double-well potential
	\[F(u)=\frac14 u^2(M{-}u)^2\]
	also comply with Assumptions~\ref{as:potential}~and~\ref{as:entropyInequality}.
	To model dynamic asymmetry and an associated qualitative transition in the morphology of the polymeric structure, one may choose the bulk modulus $A$ and relaxation coefficient $\tau$ to be positive and monotonic functions of $u$. According to~\cite{ZhouZhangE06}, $A$ should be close to a minimum and maximum value in solvent rich and polymer rich regions characterised by $u\approx 0$ and $u\approx M$, respectively, with a rapid increase at a critical concentration $u_*\in(0,M)$ where polymers start to cross-link (e.g.\ $u_*=\frakm$). This leads, for instance, to
	\begin{align*}
		A(u)&=\frac{A_\mathrm{max}-A_\mathrm{min}}{2}\Bigparen{1+\tanh\Bigparen{\frac{\cot(\pi u_*/M)-\cot(\pi u/M)}{\ell}}}+A_\mathrm{min},\\
		\tau(u)&=\taumin+\frac{\taumax-\taumin}{M^2}u^2,
	\end{align*}
	where $0<A_\mathrm{min}<A_\mathrm{max}$, $0<\taumin\le\taumax$, and $0<\ell\ll1$.

	For $M=+\infty$, choosing a linear mobility $m(u)=u$ and a logarithmic potential with a concave quadratic perturbation
	\[F(u)=u\log(u)-\beta u^2,\qquad \beta>0,\]
	leads to thin-film equations. Note that singular concave potentials whose singularity is compensated by the mobility function are also admitted. This includes the example
	\[F(u)=-\beta u\log u,\qquad \beta>0,\]
	arising, for instance, in unstable thin-film equations studied in~\cite{LP_2002}.

	The degenerate Cahn--Hilliard equation from~\cite{LisiniMatthesSavare2012} can be obtained as a special case of our system by considering $A\equiv0$. Indeed, in this scenario, the $z$ variable decouples from the phase-field equation.
\end{example}

\subsection{Main results and strategy}

For vector-valued $\boldzeta\in\CC^1(\varcl{\Om};\R^d)$ and $\boldu=(u,z)\in\HH^2(\Om)\times\LL^2(\Om)$, we abbreviate
\begin{equation}\label{eq:defBoldN}
	\boldN[\boldu,\boldzeta]:=-\int_\Om\bigparen{\Delta u+A(u)(z-K(u))}\divv(m(u)\boldzeta)\dd x+\int_\Om P(u)\divv(\boldzeta)\dd x.
\end{equation}
Note that $(\psi,\boldzeta)\mapsto\int_0^\infty\psi\boldN[\boldu,\boldzeta]\dd t$ defines a linear, continuous map on $\CC_c^0((0,\infty))\times\CC^1(\varcl{\Om};\R^d)$ for $u\in\LL^\infty([0,\infty);\HH^1(\Om))\times\Lloc^2([0,\infty);\HH^2(\Om))$ and $z\in\Lloc^2([0,\infty);\LL^2(\Om))$. Indeed, we emphasise that in the case $M=+\infty$ the asymptotic condition~\eqref{eq:asymptoticBehaviorP} ensures $P(u)\in\Lloc^1([0,\infty);\LL^1(\Om))$.

\begin{theorem}[Regularity of discrete solutions and approximative system]\label{th:discreteSolutions}
	Assume $\boldu^0=(u^0,z^0)\in\dom(\calE)$ and let the interpolants $(\olu_\kappa,\ulu_\kappa,\wtu_\kappa)$, $(\olz_\kappa,\ulz_\kappa,\whz_\kappa,\wtz_\kappa)$ be defined as above.
	\begin{subequations}
		\begin{enumerate}
			\item The interpolants enjoy the regularity
			\begin{align*}
				\olu_\kappa,\ulu_\kappa(\cdot+\kappa),\wtu_\kappa&\in\LL^\infty([0,\infty);\HH^1(\Om))\cap\Lloc^2([0,\infty);\HHn^2(\Om)),\\
				\olz_\kappa,\ulz_\kappa,\whz_\kappa,\wtz_\kappa&\in\LL^\infty([0,\infty);\LL^2(\Om)),\quad\pa_t\whz_\kappa\in\LL^2([0,\infty);\LL^2(\Om)),
			\end{align*}
			All $\olu_\kappa,\ulu_\kappa,\wtu_\kappa$ stay in $\rangeM$ and have mean value $\frakm$ at any time $t\in[0,\infty)$.
			\item The following variational equations are fulfilled:
			The \emph{approximate equation for the order parameter}
			\begin{equation}\label{eq:approxEquationU}
				\int_0^\infty\int_\Om\psi'V\olu_\kappa\dd x\dd t=-\int_0^\infty\underline{\psi}_\kappa\boldN[\olbfu_\kappa,\nabla V]\dd t+O(\sqrt{\kappa})
			\end{equation}
			for all test functions $\psi\in\CC_c^1((0,\infty))$ and $V\in\CC^2(\varcl{\Om})$ with $\nabla V\cdot\n=0$ on $\boundary{\Om}$, where $\underline{\psi}_\kappa$ is the piecewise constant function defined as $\underline{\psi}_\kappa(t)=\psi\bigparen{\kappa\lfloor\frac{t}{\kappa}\rfloor}$, and the implicit constant depends on the data and the test functions $\psi$, $V$, but not on $\kappa$. Furthermore, it holds the \emph{discrete relaxation law}
			\begin{equation}\label{eq:approxEquationZ}
				\pa_t\whz_\kappa=-\frac{1}{\tau(\ulu_\kappa)}\bigparen{\olz_\kappa-K(\olu_\kappa)}\quad\text{in }\LL^2(\Om)
			\end{equation}
			for all $t\in[0,\infty)\setminus\set{n\kappa}{n\in\N}$.
			\item The \emph{discrete energy-dissipation inequality}
			\begin{align}\label{eq:approxFreeEnergyEDI}
				\calE(\olbfu_\kappa(t))+\int_s^t\int_\Om\biggbraces{&\frac12\Bigparen{\frac{\abs{\olj_\kappa}^2}{m(\olu_\kappa)}+\frac{1}{\tau(\ulu_\kappa)}\abs{\olz_\kappa-K(\olu_\kappa)}^2}\notag\\
					&+\frac12\Bigparen{\frac{\abs{\wtj_\kappa}^2}{m(\wtu_\kappa)}+\frac{1}{\tau(\ulu_\kappa)}\abs{\wtz_\kappa-K(\wtu_\kappa)}^2}}\dd x\dd r\leq\calE(\ulbfu_\kappa(s))
			\end{align}
			holds for all $0\leq s\leq t<\infty$, where $\olj_\kappa,\wtj_\kappa\in\LL^2([0,\infty);\LL^{4/3}(\Om))$ are determined by the \emph{constitutive flux relations}:
			\begin{equation}\label{eq:approxFluxConditions}
				\int_\Om\boldzeta\cdot\olj_\kappa(t)\dd x=\boldN[\olbfu_\kappa(t),\boldzeta],\qquad\int_\Om\boldzeta\cdot\wtj_\kappa(t)\dd x=\boldN[\wtbfu_\kappa(t),\boldzeta]
			\end{equation}
			for all $\boldzeta\in\CC^1(\varcl{\Om};\R^d)$ with $\boldzeta\cdot\n=0$ on $\boundary{\Om}$ and almost every $t\in[0,\infty)$.
			\item If Assumption~\ref{as:entropyInequality}~\ref{it:potentialConcavePartVanishing} holds true, then the \emph{discrete entropy-dissipation inequality} for the entropy functional defined in~\eqref{eq:defEntropy}
			\begin{equation}\label{eq:approxInternalEnergyEDI}
				\calU(\olu_\kappa(t))+\int_s^t\int_\Om\Bigparen{\abs{\Delta\olu_\kappa}^2+F''(\olu_\kappa)\abs{\nabla\olu_\kappa}^2+A(\olu_\kappa)(\olz_\kappa-K(\olu_\kappa))\Delta\olu_\kappa}\dd x\dd r\leq\calU(\ulu_\kappa(s)),
			\end{equation}
			is fulfilled for every $0\leq s\leq t<\infty$.
		\end{enumerate}
	\end{subequations}
\end{theorem}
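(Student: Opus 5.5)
We prove the four parts in the order (i)–(iv), by analysing a single step of \eqref{eq:TIM} (and of \eqref{eq:deGiorgiInterpolant}, which is the same problem with $\kappa$ replaced by $\theta$) and then summing over steps.

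\emph{Step 1: a priori bounds and the $z$-equation.} The minimality of $\boldu_\kappa^{n+1}$ compared with the competitor $\boldu_\kappa^n$ gives $\calE(\boldu_\kappa^{n+1})+\frac{1}{2\kappa}\calD^2_{u^n_\kappa}\le\calE(\boldu^n_\kappa)$. Telescoping these inequalities yields uniform bounds on $\calE$. With \eqref{eq:potentialDecompositionBounds} and the Lipschitz bound on $K$, this gives $\LL^\infty(\HH^1)\times\LL^\infty(\LL^2)$ bounds and $\sum_n\calD^2\le 2\kappa\calE(\boldu^0)$. The same comparison applies to the De\,Giorgi interpolant. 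Mass and the range $\rangeM$ are preserved by construction of $\calX^M_\frakm$. For fixed $u$, the $z$-problem is strictly convex and pointwise. Its Euler–Lagrange equation is $\tau(u^n_\kappa)(z^{n+1}-z^n)/\kappa=-(z^{n+1}-K(u^{n+1}))$, which is exactly \eqref{eq:approxEquationZ}, and the bound on $\pa_t\whz_\kappa$ in $\LL^2$ follows from the $\calD$-sum.

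\emph{Step 2: $\HH^2$-regularity and the entropy inequality via flow interchange.} We take the auxiliary flow to be the $\calW_m$-gradient flow of the entropy $\calU$, i.e.\ the heat flow $\pa_s w=\Delta w$ with Neumann conditions, which is an EVI$_0$ flow for $\calW_m$ by the results reviewed in Subsection~\ref{subsec:transportMetric}. We keep $z$ fixed along this flow. Comparing $\Psi_{\kappa,\boldu_\kappa^n}$ along this flow started at $\boldu^{n+1}_\kappa$, and invoking Lemma~\ref{le:freeEnergyAlongHeatFlow} for the dissipation $-\frac{d}{ds}\calE$, gives
\[\calU(u^{n+1})-\calU(u^n)\le-\kappa\int_\Om\bigl(\abs{\Delta u^{n+1}}^2+F''\abs{\nabla u^{n+1}}^2+A(z-K)\Delta u^{n+1}\bigr).\]
Summing yields \eqref{eq:approxInternalEnergyEDI}. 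Assumption~\ref{as:entropyInequality}\ref{it:potentialConcavePartVanishing} is needed so that the concave part contributes correctly at the boundary values. For the $\HH^2$ bound alone, without (iv), we use instead a regularised entropy and the semiconvexity \eqref{eq:potentialLowerBound}: we absorb $\abs{F''}\abs{\nabla u}^2$ and the $A(z-K)\Delta u$ term by Young's inequality, Gagliardo–Nirenberg, and \eqref{eq:assumptionA}. The De\,Giorgi interpolant does not telescope, so for it we instead derive a coercivity estimate for the slope, of the form $\norm{u}_{\HH^2}^2\lesssim\abs{\pa\calE}^2_{\calD}(\boldu)+C(\calE)$. We then combine this with \eqref{eq:slopeEstimate} and \eqref{eq:discreteEDB}, which give $\int\abs{\pa\calE}^2\le2\calE(\boldu^0)$.

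\emph{Step 3: the discrete weak equation and the fluxes.} For \eqref{eq:approxEquationU} we again use flow interchange, now with the flow generated by the potential energy $\calV(u)=\int Vu$. This is the $\calW_m$-flow $\pa_s w=\divv(m(w)\nabla V)$, which is $\lambda$-EVI under \eqref{eq:mobilityLipschitzConditions}. We take $\pm V$ to obtain two-sided bounds $\int V(u^{n+1}-u^n)=-\kappa\boldN[\boldu^{n+1},\nabla V]+O(\calD^2+\kappa^2)$. After integrating by parts, the derivative of $\calE$ along this flow is exactly $\boldN$, with $P$ arising from $\int F'(u)\divv(m\nabla V)$. Summing against $\psi$ and using $\sum\calD^2\lesssim\kappa$ together with Cauchy–Schwarz gives the $O(\sqrt\kappa)$ error. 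For (iii), we define the fluxes by \eqref{eq:approxFluxConditions}. The key claim is $\sup_{\boldzeta}\bigl(2\boldN[\boldu,\boldzeta]-\int m\abs{\boldzeta}^2\bigr)+\int\tau^{-1}\abs{z-K}^2\le\abs{\pa\calE}^2_{\calD}(\boldu)$. We prove it by perturbing $\boldu$ along the flow $\pa_s w=\divv(m(w)\boldzeta)$ with metric speed $\le\norm{\sqrt m\boldzeta}$, and $z$ in the direction $-(z-K)/\tau$. This variation identifies $\boldj$ as an element of $\LL^2(m^{-1})$ with $\int\abs{\boldj}^2/m\le$ the transport slope part, and the $\LL^{4/3}$ integrability comes from Hölder's inequality with $\sqrt m\in\LL^4$. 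For $\olbfu_\kappa$, the slope is bounded by $\calD/\kappa$ by the Euler–Lagrange/slope inequality, and for $\wtbfu_\kappa$ by \eqref{eq:slopeEstimate}. Inserting both bounds into \eqref{eq:discreteEDB}, each with weight $\frac12$, and summing gives \eqref{eq:approxFreeEnergyEDI}.

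\emph{Main obstacle.} We expect the hardest step to be the coercivity estimate for the slope at the De\,Giorgi interpolant, and its companion, the flux lower bound for the slope. Both require computing the derivative of $\calE$ along a transport flow at a point known only to lie in $\HH^1$. We plan to handle this by first proving $\HH^2$-regularity of each minimiser through the heat-flow interchange, applied to the $\theta$-problem, and only then justifying the variations. In that justification, the $P$ term must be controlled via \eqref{eq:asymptoticBehaviorP} together with \eqref{eq:interpolationExponentq}.
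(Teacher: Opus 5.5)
Your Steps~1--2 and the bookkeeping that turns slope bounds into \eqref{eq:approxFreeEnergyEDI} broadly match the paper. The gap is in Step~3: the derivation of \eqref{eq:approxEquationU} rests on a false claim. The flow $\pa_s w=\divv(m(w)\nabla V)$ of the plain potential energy $\calV$ is \emph{not} a $\lambda$-flow for $\calW_m$ unless $m$ is linear. For $M<+\infty$ it never is, since $m(0)=m(M)=0$, and the conditions \eqref{eq:mobilityLipschitzConditions} do not repair this (see Subsection~\ref{ssec:flow-interchange}). What they do give is EVI for the entropic regularisation $\calV_\ve=\calV+\ve\calU$, whose flow is the viscous equation \eqref{eq:viscousPotentialEquation}, but only with $\lambda_\ve\ge -C(1+1/\ve)$. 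The reason is that the extra term $-\frac{\ve}{2}\int m''(\gamma)|\nabla\gamma|^2|\nabla\varphi|^2\ge0$ is what absorbs the $mm''$ terms via Young, at the price of $1/\ve$. Lemma~\ref{le:flowInterchange} with Lemma~\ref{le:freeEnergyAlongPotentialFlow} (with $p=0$) then gives per step $\calV_\ve(u^{n+1}_\kappa)-\calV_\ve(u^n_\kappa)\le\kappa\boldN[\boldu^{n+1}_\kappa,\nabla V]+\kappa\ve C(1+\calE(\boldu^{n+1}_\kappa))+\frac{|\lambda_\ve|}{2}\calW_m^2(u^{n+1}_\kappa,u^n_\kappa)$. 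Summing against $\psi\ge0$, using $\sum_n\calW_m^2\lesssim\kappa$, and paying $O(\ve)$ for the $\ve\calU$-part on the left yields an error $C(\kappa/\ve+\ve+\kappa)$; the choice $\ve=\sqrt\kappa$ gives $O(\sqrt\kappa)$. Your own bookkeeping signals the gap: with a fixed $\lambda$ the error would be $\frac{|\lambda|}{2}\sum_n\calW_m^2=O(\kappa)$ directly, and no Cauchy--Schwarz step produces a $\sqrt\kappa$. The rate in the statement is exactly the footprint of this $\ve$-trade-off.

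A related but repairable problem affects your variations for the flux bound. The inviscid equation $\pa_s w=\divv(m(w)\boldzeta)$ is a nonlinear first-order conservation law with no smoothing. From $\HH^2$ data (not even Lipschitz for $d\ge2$) nothing guarantees $w_s\in\HH^2$, which you need to differentiate the Dirichlet term, and no strong maximum principle keeps $w_s$ away from the singularities of $F$. The paper instead works as follows:
\begin{enumerate}
\item it uses the viscous flow \eqref{eq:viscousPotentialEquation} (Proposition~\ref{pr:driftDiffusion}) and regularises $F$ to $F_\delta$;
\item it bounds the $\calD_w$-speed by $\int_\Om(m(u)|\boldzeta|^2+\tau(w)p^2)\,\mathrm{d}x+O(\ve)$ and takes $\sup_{\ve>0}$ at the end, which is harmless because no EVI is needed here;
\item it treats the degenerate case $m(u)\boldzeta\equiv0$ separately, so that the functional is well defined on the weighted $\LL^2$-space.
\end{enumerate}
Finally, in Step~2 and in the slope coercivity, two things hinge on Lemma~\ref{le:nabla2mEstimate}, i.e.\ on the Lions--Villani bound $\norm{\nabla\sqrt v}_{\LL^4}^2\lesssim\norm{\Delta v}_{\LL^2}$: absorbing the possibly singular negative part of $F''$ (which may behave like $-1/m$), and controlling the $\calW_m$-speed of the heat flow. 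Gagliardo--Nirenberg cannot handle the degenerate weight $1/m(v)$.
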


\begin{remark}[Higher integrability for $\olj$ and $\wtj$]\label{re:discreteSolutions}
	Let $\alpha\in[0,1]$ satisfy $m(u)\lesssim1+u^\alpha$. We in addition have $\olj_\kappa,\wtj_\kappa\in\LL^2([0,\infty);\LL^\frac{2q}{\alpha+q}(\Om))$ for every $q\in[2,+\infty]$ satisfying $\HH^1(\Om)\hookrightarrow\LL^q(\Om)$. This follows from $\olu_\kappa,\wtu_\kappa\in\LL^\infty([0,\infty);\HH^1(\Om))$ and H\"older's inequality
	\begin{align*}
		\int_0^\infty\Bigparen{\int_\Om\abs{\olj_\kappa}^\frac{2q}{\alpha+q}\dd x}^\frac{\alpha+q}{q}\dd t&\leq\int_0^\infty\Bigparen{\int_\Om\abs{m(\olu_\kappa)}^\frac{q}{\alpha}\dd x}^\frac{\alpha}{q}\Bigparen{\int_\Om\frac{\abs{\olj_\kappa}^2}{m(\olu_\kappa)}\dd x}\dd t\\
		&\leq2^{1-\alpha}\sup_{t\in[0,\infty)}\norm{1+\olu_\kappa(t)}_{\LL^q(\Om)}^\alpha\int_0^\infty\int_\Om\frac{\abs{\olj_\kappa}^2}{m(\olu_\kappa)}\dd x\dd t<\infty.
	\end{align*}
\end{remark}

\begin{theorem}\label{th:mainTheorem}
	For every sequence $(\kappaell)_{\ell\in\N}\downarrow0$, there exists a subsequence (non-relabelled) and a triple $(u,z,\boldj)$ with the regularity
	\begin{subequations}\label{eq:regularities}
		\begin{align}
			u&\in\CC_{w,b}([0,\infty);\HH^1(\Om))\cap\Lloc^2([0,\infty);\HH^2(\Om)),\label{eq:regularityU}\\
			z&\in\LL^\infty([0,\infty);\LL^2(\Om))\text{ with }\pa_tz\in\LL^2([0,\infty);\LL^2(\Om)),\label{eq:regularityZ}\\
			\boldj&\in\LL^2([0,\infty);\LL^{4/3}(\Om;\R^d)),\label{eq:regularityFluxes}
		\end{align}
	\end{subequations}
	such that the interpolants from Theorem~\ref{th:discreteSolutions} converge in the sense as specified in Proposition~\ref{pr:convergentSubsequences}, and the following properties hold true:
	\begin{subequations}\label{eq:solutionNotion}
		\begin{enumerate}
			\item \emph{Continuity equation} for the phase-field variable, i.e.\ for every $\eta\in\CC_c^1((0,\infty)\times\varcl{\Om})$
			\begin{equation}\label{eq:equationU}
				\int_0^\infty\int_\Om u\pa_t\eta\dd x\dd t=-\int_0^\infty\int_\Om\boldj\cdot\nabla\eta\dd x\dd t,\qquad u(0)=u^0,
			\end{equation}
			as well as the \emph{constitutive equation for the diffusive flux for a.e.\ $t>0$}
			\begin{equation}\label{eq:fluxCondition}
				\int_\Om\boldj(t)\cdot\boldzeta\dd x=\boldN[\boldu(t),\boldzeta]
			\end{equation}
			for every $\boldzeta\in\CC_c^1(\varcl{\Om};\R^d)$ with $\boldzeta\cdot\n=0$ on $\boundary{\Om}$.\\[1mm]
			\emph{Relaxation law for $z$}
			\begin{equation}\label{eq:equationZ}
				\pa_tz=-\frac{1}{\tau(u)}\bigparen{z-K(u)}\quad\text{in }\LL^2([0,\infty);\LL^2(\Om)),\qquad z(0)=z^0.
			\end{equation}
			\item The \emph{energy-dissipation inequality} for every $t\in[0,\infty)$
			\begin{equation}\label{eq:freeEnergyEDI}
				\calE(\boldu(t))+\int_0^t\int_\Om\Bigparen{\frac{\abs{\boldj}^2}{m(u)}+\frac{1}{\tau(u)}\abs{z-K(u)}^2}\dd x\dd s\leq\calE(\boldu^0),
			\end{equation}
			\item If Assumption~\ref{as:entropyInequality} and $z^0\in\LL^\infty(\Om)$ are satisfied in addition, then $z\in\Lloc^\infty([0,\infty);\LL^\infty(\Om))$, the convergence~\eqref{eq:convergenceCwIL2z} in $z$ holds strongly,
			and we have the \emph{entropy-dissipation inequality} for every $t\in[0,\infty)$
			\begin{equation}\label{eq:internalEnergyEDI}
				\calU(\boldu(t))+\int_0^t\int_\Om\Bigparen{\abs{\Delta u}^2+F''(u)\abs{\nabla u}^2+A(u)(z-K(u))\Delta u}\dd x\dd s\leq\calU(\boldu^0).
			\end{equation}
		\end{enumerate}
	\end{subequations}
\end{theorem}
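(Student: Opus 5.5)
The plan is a compactness-and-limit argument built on the uniform estimates of Theorem~\ref{th:discreteSolutions}.

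\textbf{Step 1: uniform bounds.} Taking $s=0$ in the discrete energy-dissipation inequality~\eqref{eq:approxFreeEnergyEDI}, together with the lower bound~\eqref{eq:potentialDecompositionBounds} on $F$ and mass conservation, gives $\kappa$-uniform bounds:
\begin{itemize}
\item for $\olu_\kappa,\wtu_\kappa$ in $\LL^\infty(\HH^1)$;
\item for the $z$-interpolants in $\LL^\infty(\LL^2)$;
\item for $\int\!\!\int |\olj_\kappa|^2/m(\olu_\kappa)$ and $\int\!\!\int |\wtj_\kappa|^2/m(\wtu_\kappa)$, hence on the fluxes in $\LL^2(\LL^{4/3})$ by Remark~\ref{re:discreteSolutions}.
\end{itemize}
The $\Lloc^2(\HH^2)$ bounds uniform in $\kappa$ are part of the construction in Theorem~\ref{th:discreteSolutions}: the telescoping entropy estimate~\eqref{eq:approxInternalEnergyEDI} for $\olu_\kappa$, and the slope coercivity for $\wtu_\kappa$. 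The relaxation law~\eqref{eq:approxEquationZ} bounds $\pa_t\whz_\kappa$ in $\LL^2(\LL^2)$.

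For time equicontinuity of $\olu_\kappa$, I use the $\calW_m$-estimate $\sum_n\calW_m^2(u^n_\kappa,u^{n+1}_\kappa)/\kappa\le C$ from~\eqref{eq:discreteEDB}. Combined with the comparison of $\calW_m$ with a negative Sobolev norm for concave $m$, this gives a refined Aubin--Lions/Arzel\`a--Ascoli argument (as in \cite{LisiniMatthesSavare2012}). The output is that $\olu_\kappa,\ulu_\kappa,\wtu_\kappa$ converge to a common limit $u$:
\begin{itemize}
\item strongly in $\Lloc^2(\HH^1)$ and pointwise a.e.;
\item weakly in $\Lloc^2(\HH^2)$;
\item weakly in $\HH^1$ for each $t$.
\end{itemize}
The three interpolants share the limit because $\calW_m(\olu_\kappa,\wtu_\kappa)\lesssim\sqrt\kappa$ in $\LL^2$-time average. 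For $z$, I obtain weak-$*$ convergence of all interpolants in $\LL^\infty(\LL^2)$ to a common $z$, since $\norm{\olz_\kappa-\whz_\kappa}_{\LL^2}\lesssim\sqrt\kappa$. Fluxes converge weakly to $\boldj$ and $\tilde\boldj$. I record all of this as the content of Proposition~\ref{pr:convergentSubsequences}.

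\textbf{Step 2: equations.} Passing to the limit in~\eqref{eq:approxEquationZ} is delicate because it is nonlinear in $u$ only: $1/\tau(\ulu_\kappa)$ and $K(\olu_\kappa)$ converge strongly, and $\olz_\kappa$ converges weakly. That gives~\eqref{eq:equationZ}.

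For the flux relation~\eqref{eq:approxFluxConditions}, the term $\int \Delta u\,\divv(m(u)\boldzeta)$ is weak-times-strong, because $\divv(m(u)\boldzeta)=m'(u)\nabla u\cdot\boldzeta+m\divv\boldzeta$ converges strongly in $\LL^2$ by dominated convergence and the boundedness of $m'$. $P(u)$ converges in $\LL^1$ by~\eqref{eq:asymptoticBehaviorP} and the embedding~\eqref{eq:interpolationExponentq} (Vitali). The term $A(u)(z-K(u))\divv(m(u)\boldzeta)$ is weak $z$ times a strongly convergent factor. This yields~\eqref{eq:fluxCondition} for both $\boldj$ and $\tilde\boldj$, so $\boldj=\tilde\boldj$.

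The continuity equation follows from~\eqref{eq:approxEquationU}, first with product test functions $\psi V$ using $\underline\psi_\kappa\to\psi$ uniformly, and then by density to $\eta$ (tangential Neumann test fields suffice). Weak continuity in $\HH^1$ and the initial data follow from the equicontinuity.

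\textbf{Step 3: EDI.} The obstacle is lower semicontinuity of the dissipation. For the flux term I use the joint convexity and lower semicontinuity of $(\boldj,m)\mapsto|\boldj|^2/m$ (Benamou--Brenier type functional) under weak convergence of $\boldj$ and strong convergence of $m(u)$. The two halves in~\eqref{eq:approxFreeEnergyEDI} have the same limit, so their sum gives the full factor. The $z$-dissipation is convex in $z$ with strongly converging coefficients $1/\tau(\ulu_\kappa)$ and $K$, so it is weakly lower semicontinuous. The energy is lower semicontinuous at each fixed $t$, by weak $\HH^1$ convergence, strong $\LL^2$ convergence of $u$ for $F$ (Fatou, using the decomposition bounds), and weak $\LL^2$ convergence of $z-K(u)$.

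\textbf{Step 4: entropy.} Under Assumption~\ref{as:entropyInequality} and $z^0\in\LL^\infty$, the discrete ODE~\eqref{eq:approxEquationZ} is a convex combination step, so it gives uniform $\LL^\infty$ bounds on $z$ by induction. Lipschitz continuity of $\tau$ and $K/\tau$ then lets me show that $\whz_\kappa$ is Cauchy in $\LL^2$ pointwise in $t$, via Gronwall, using strong convergence of $u$. Hence $z$ converges strongly.

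In~\eqref{eq:approxInternalEnergyEDI}:
\begin{itemize}
\item $|\Delta u|^2$ is handled by weak lower semicontinuity;
\item the term $A(u)(z-K(u))\Delta u$ becomes strong times weak;
\item the term $F''(u)|\nabla u|^2$ is split using the convex/concave decomposition. The convex part is handled by Fatou, and the concave part uses condition~\ref{it:potentialConcavePartVanishing} to control it near the degeneracies, together with strong $\HH^1$ convergence.
\end{itemize}
Finally, $\calU$ is lower semicontinuous.

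I expect the main difficulty to lie in the strong convergence of $z$ and in the concave-part term.
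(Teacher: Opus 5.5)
Most of your plan follows the paper: compactness, limits in the discrete equations, lower semicontinuity in \eqref{eq:approxFreeEnergyEDI}, and Gronwall for $z$. Your joint-convexity argument for $\iint\abs{\boldj}^2/m(u)$ is a valid replacement for the paper's identification of the weak limit of $\olj_\kappa/\sqrt{m(\olu_\kappa)}$. The genuine gap is the $\LL^\infty$ bound on $z$ in Step~4. The convex-combination induction only gives
\[
\norm{z_\kappa^n}_{\LL^\infty(\Om)}\le\max\Bigl\{\norm{z^0}_{\LL^\infty(\Om)},\,\Amax\max_{k\le n}\norm{u_\kappa^k}_{\LL^\infty(\Om)}\Bigr\}.
\]
This is uniform in $\kappa$ when $M<+\infty$ (or $d=1$). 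However, Assumption~\ref{as:entropyInequality} also allows $M=+\infty$ with $d\le3$. For $d\in\{2,3\}$, nothing then controls $\max_k\norm{u_\kappa^k}_{\LL^\infty(\Om)}$. You only know $\sup_k\norm{u_\kappa^k}_{\HH^1(\Om)}$, which gives no $\LL^\infty$-embedding, and $\kappa\sum_{k\kappa\le T}\norm{u_\kappa^k}_{\HH^2(\Om)}^2\le C(T)$.

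The missing idea is that $K(u_\kappa^k)$ enters with weight $O(\kappa)$. Pointwise, $\abs{z_\kappa^k}\le\abs{z_\kappa^{k-1}}+\frac{\kappa\Amax}{\taumin}\abs{u_\kappa^k}$, so after summation
\[
\norm{\olz_\kappa}_{\LL^\infty(0,T;\LL^\infty(\Om))}\le\norm{z^0}_{\LL^\infty(\Om)}+\frac{\Amax}{\taumin}\norm{\olu_\kappa}_{\LL^1(0,T;\LL^\infty(\Om))}.
\]
The right-hand side is bounded via $\HH^2(\Om)\hookrightarrow\LL^\infty(\Om)$ for $d\le3$ together with the $\LL^2(0,T;\HH^2(\Om))$ bound. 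This is exactly where $d\le3$ enters, and why only $z\in\Lloc^\infty([0,\infty);\LL^\infty(\Om))$ is claimed. The bound is needed both for that claim and to control the term $\olz_\sigma\bigl(\tau(\ulu_\kappa)^{-1}-\tau(\ulu_\sigma)^{-1}\bigr)$ in your Gronwall estimate.

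There are two smaller issues.

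\emph{Concave part of the entropy dissipation.} For the concave part of $F''(u)\abs{\nabla u}^2$, condition~\ref{it:potentialConcavePartVanishing} plus strong $\HH^1$ convergence is not enough, because $\Fconc''$ may blow up at the degeneracies. Write $\Fconc''(u)\abs{\nabla u}^2=m(u)\Fconc''(u)\cdot\abs{\nabla u}^2/m(u)$. Then use the uniform space-time bound on $\abs{\nabla\olu_\kappa}^2/m(\olu_\kappa)$ from Lemma~\ref{le:nabla2mEstimate} (the Lions--Villani estimate), together with the $\LL^2(0,T;\HH^2(\Om))$ bound. This gives both convergence in measure and equi-integrability.

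\emph{Source of the $\HH^2$ bound.} In parts (i)--(ii), the uniform $\LL^2(0,T;\HH^2(\Om))$ bound for $\olu_\kappa$ cannot be taken from \eqref{eq:approxInternalEnergyEDI}, since that inequality requires Assumption~\ref{as:entropyInequality}~\ref{it:potentialConcavePartVanishing}. Use instead the assumption-free telescoped flow-interchange estimate of Lemma~\ref{le:aPrioriH2}.
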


\begin{remark}
	\leavevmode
	\begin{enumerate}
		\item A posteriori, the time derivative $\pa_tu$ exists in $\LL^2([0,\infty);(\WW^{1,4}(\Om))^*)$, namely
		\[\product{\pa_tu(t)}{V}_{(\WW^{1,4}(\Om))^*,\WW^{1,4}(\Om)}:=\int_\Om\boldj(t)\cdot\nabla V\dd x.\]
		According to Remark~\ref{re:discreteSolutions}, the regularity for $\boldj$ improves if $m$ grows less than linearly. In particular, in the case $M<+\infty$ we always have $\pa_tu\in\LL^2([0,\infty);(\HH^1(\Om))^*)$.
		\item Helly's selection theorem tells us that, up to a further subsequence, the energies along the piecewise constant interpolants $\calE(\olbfu_\kappa)$ and $\calE(\ulbfu_\kappa)$ converge pointwise in time to non-increasing functions $\overline{E}$ and $\underline{E}$, respectively, and $\overline{E}=\underline{E}$ almost everywhere. Without Assumption~\ref{as:entropyInequality} we can only expect weak convergence of $\olz_\kappa$ and $\ulz_\kappa$, allowing to conclude $\calE(\boldu(t))\leq\overline{E}(t)\leq\underline{E}(t)$. If Assumption~\ref{as:entropyInequality} holds true and~\eqref{eq:potentialUpperBound} is satisfied for $\gamma\leq\frac{2d}{d-2}$ in case $d\geq3$, we also have $\calE(\olbfu_\kappa(t)),\calE(\ulbfu_\kappa(t))\to\calE(\boldu(t))$ for almost every $t\in[0,\infty)$, in particular, $\calE(\boldu)=\overline{E}=\underline{E}$ almost everywhere.
	\end{enumerate}
\end{remark}

\paragraph{Key ingredients.}
A crucial point of our analysis is the derivation of the energy-dissipation inequality~\eqref{eq:edi.intro} (cf.~\eqref{eq:freeEnergyEDI}).
To this end, we exploit the discrete energy-dissipation balance~\eqref{eq:discreteEDB} in two ways: first, for inferring uniform bounds in $\LL^2(0,T;\HH^2(\Om))$ for the De\,Giorgi interpolant $\wtu_\kappa$ and, second, for expressing the dissipation in terms of the PDE fluxes.
While for the piecewise constant interpolant, the bound in $\LL^2(0,T;\HH^2(\Om))$ follows from flow interchange with the entropy $\calU$ and a telescoping structure, for the De\,Giorgi interpolant we take advantage of the metric slope term in the discrete energy-dissipation balance, for which we establish a coercivity property of the form
\[\norm{\Delta u}^2_{\LL^2(\Om)}\lesssim 1+\calE(\boldu)+\abs{\pa\calE}_{\calD_w}^2(\boldu),\qquad \boldu=(u,z),\]
cf.\ Proposition~\ref{pr:aPrioriH2DeGiorgiPrelim}.
This is achieved by considering variations induced by the Neumann heat flow emanating from $u$ in the definition of $\abs{\pa\calE}_{\calD_w}(\boldu)$. A key ingredient in this step is the Lions--Villani square root estimate~\cite{LV_1995}
\begin{align}\label{eq:square-root.est}
	\norm{\nabla\sqrt{v}}_{\LL^4(\Om)}^2\lesssim \norm{\Delta v}_{\LL^2(\Om)}
\end{align}
for non-negative $v\in\HHn^2(\Om)$, see also~\cite[(3.4)]{GianazzaSavareToscani2009}, which, combined with the corresponding estimate for $M-v$ if $M<+\infty$, controls the $\calW_m$-action along the heat flow, since the non-negative concave function $m$ vanishes at most linearly at its points of degeneracy (cf.~\eqref{eq:mobilityGrowthEstimates}).
In order to express the dissipation in terms of the constitutive fluxes, we use variations along a diffusively regularised nonlinear drift equation in the definition of the metric slope. The resulting estimate shows that the first variation of $\calE$ defines a bounded linear functional on a product of weighted $\LL^2$-spaces. Riesz representation then provides the fluxes $\olj_\kappa$ and $\wtj_\kappa$, together with bounds on their dissipation in terms of the corresponding metric slopes.
At the same time, the slope inequality~\eqref{eq:slopeEstimate}, for the piecewise constant interpolant applied at $t=\kappa(n+1)$, bounds both dissipation terms in~\eqref{eq:discreteEDB} from below by the corresponding squared metric slopes of the piecewise constant and De\,Giorgi interpolants. Combining these estimates with the flux bounds yields the discrete energy-dissipation inequality~\eqref{eq:approxFreeEnergyEDI}.

The entropy-dissipation inequality~\eqref{eq:internalEnergyEDI} is obtained by a more classical argument using flow interchange with the entropy functional $\calU$. After carefully handling the possible singularities of $F''$, this first yields the discrete inequality~\eqref{eq:approxInternalEnergyEDI}. An additional difficulty specific to the VPS system is the passage to the limit in the coupling term $A(u)(z{-}K(u))\Delta u$. Under additional assumptions on the data, we use the discrete relaxation equation to derive uniform $\LL^\infty((0,T)\times\Om)$-bounds for the $z$-interpolants. A stability estimate then yields their strong convergence in $\LL^\infty(0,T;\LL^2(\Om))$. This allows us to pass to the limit in the coupling term and conclude~\eqref{eq:internalEnergyEDI}.

Finally, let us mention that the derivation of the weak formulation of the evolution equation for $u$ requires a separate argument, again based on flow interchange, but now involving entropically regularised potential energies. This step mostly follows~\cite{LisiniMatthesSavare2012} and, in particular, requires the mobility function $m$ to vanish linearly at the finite endpoints of $(0,M)$.

\section{Preliminaries}\label{sec:preliminaries}

\subsection{Transport distance with concave mobility}\label{subsec:transportMetric}

In the following, we review the definition and properties of the generalised Benamou--Brenier distance from~\cite{DolbeaultNazaretSavare2009, CarrilloLisiniSavareSlepcev2010, LisiniMarigonda2010}. To this end, let $\Om\subset\R^d$ be a convex domain, and $T\in(0,\infty)$. We denote by $\CE_\frakm(0,T)$ the set of all pairs $(\mu,\boldXi)=(\mu_t,\boldXi_t)_{t\in(0,T)}$ of Borel families of measures in $\calM_\frakm^+(\varcl{\Om})\times\calM(\varcl{\Om};\R^d)$ satisfying
\[\mu\in\CC_w([0,T];\calM(\varcl{\Om})),\qquad\int_0^T\norm{\boldXi_t}_\mathrm{TV}\dd t<+\infty,\]
as well as the continuity equation
\[\begin{alignedat}{2}
	\pa_t\mu_t+\divv(\boldXi_t)&=0\quad&\text{in }&(0,T)\times\Om,\\
	\boldXi_t\cdot\n&=0\quad&\text{on }&(0,T)\times\boundary{\Om}
\end{alignedat}\]
in the distributional sense, i.e.\ for every $\varphi\in\CC^1([0,T]\times\varcl{\Om})$ and $t_0<t_1$ it holds
\[\int_{\varcl{\Om}}\varphi_{t_1}\dd\mu_{t_1}-\int_{\varcl{\Om}}\varphi_{t_0}\dd\mu_{t_0}=\int_{t_0}^{t_1}\int_{\varcl{\Om}}\pa_t\varphi_t\dd\mu_t\dd t+\int_{t_0}^{t_1}\int_{\varcl{\Om}}\nabla\varphi_t\cdot\dd\boldXi_t\dd t.\]
Then, for $\mu^0,\mu^1\in\calM_\frakm^+(\varcl{\Om})$ fixed, we let $\CE_\frakm(0,T;\mu^0\to\mu^1)$ be the subset of $\CE_\frakm(0,T)$ whose elements $(\mu,\boldXi)$ satisfy in addition $\mu_0=\mu^0$ and $\mu_T=\mu^1$.

Next, we define the so-called \emph{action functional} $\Phi:\calM_\frakm^+(\varcl{\Om})\times\calM(\varcl{\Om};\R^d)\to[0,+\infty]$. To this end, we abbreviate
\[\phi:[0,\infty)\times\R^d\to[0,\infty],\qquad\phi(u,\boldj):=\begin{cases}
	\frac{\abs{\boldj}^2}{m(u)}&\text{if }u\in\rangeM,\\
	+\infty&\text{otherwise},
\end{cases}\]
where we used the convention $\frac00:=0$. Then, if $\mu=u\Leb^d\vert_\Om+\mu^\perp$, $\boldXi=\boldj\Leb^d\vert_\Om+\boldXi^\perp$ denote the Lebesgue decompositions, $\nu:=\mu^\perp+\abs{\boldXi^\perp}$, $\mu^\perp=u^\perp\nu$ and $\boldXi^\perp=\boldj^\perp\nu$, we put
\begin{equation}\label{eq:defActionFunctional}
	\Phi(\mu,\boldXi):=\int_\Om\phi(u,\boldj)\dd\Leb^d(x)+\int_{\varcl{\Om}}\phi^\infty(u^\perp,\boldj^\perp)\dd\nu(x).
\end{equation}
Here, $\phi^\infty$ denotes the recession function of $\phi$, i.e.\ $\phi^\infty(v,\boldzeta):=\lim_{\lambda\nearrow+\infty}\lambda\phi(\lambda v,\boldzeta)$.

In some circumstances the expression~\eqref{eq:defActionFunctional} simplifies. For instance, if $\Phi(\mu,\boldXi)<+\infty$ and $\mu\ll\Leb^d\vert_{\varcl{\Om}}$, it automatically follows $\boldXi\ll\Leb^d\vert_{\varcl{\Om}}$ (cf.~\cite[Lemma 3.7]{DolbeaultNazaretSavare2009}) which implies that the recession integral vanishes.
Another special case occurs when $M<+\infty$. Since $\phi^\infty(v,\boldzeta)=\infty$ whenever $v>0$ in this scenario, $\Phi(\mu,\boldXi)<+\infty$ always entails $\mu\ll\Leb^d\vert_{\varcl{\Om}}$, whence $\boldXi\ll\Leb^d\vert_{\varcl{\Om}}$ as well, and the recession integral is zero again.

The generalised Benamou--Brenier pseudo-distance $\calW_m$ is defined on $\calM_\frakm^+(\varcl{\Om})$ as
\begin{equation}\label{eq:defMetric}
	\calW_m(\mu^0,\mu^1):=\inf\biggset{\biggparen{T\int_0^T\Phi(\mu_t,\boldXi_t)\dd t}^{1/2}}{(\mu,\boldXi)\in\CE_\frakm(0,T;\mu^0\to\mu^1)}.
\end{equation}
The definition of $\calW_m$ is independent of the choice of $T$ and, when restricted to $\calX^M_{\mathfrak m}$ if $M<+\infty$, satisfies all metric axioms, except that it may take the value $+\infty$.

On a bounded convex domain and in the case $0\leq u\leq M<\infty$, convergence in $\calW_m$ is equivalent, under the assumptions on $m$ specified above, to narrow convergence of the associated measures $u\Leb^d$, see \cite[Theorem 6]{LisiniMarigonda2010}. Since the densities are uniformly bounded, this is also equivalent to weak-$*$ convergence in $\LL^\infty(\Omega)$.
When $M=+\infty$, convergence in $\calW_m$ implies narrow convergence of the associated measures $u\Leb^d$, under the assumptions on $m$ stated above. The converse is in general not true: without a uniform bound on the densities, narrow convergence alone need not imply convergence in $\calW_m$, cf.~\cite[Section 5.1]{DolbeaultNazaretSavare2009}.

The following properties of $\calW_m$ are relevant for our analysis:

\begin{lemma}\label{le:metricProperties}
	\leavevmode
	\begin{enumerate}
		\item\label{it:metricPropertyConvergence} If $\mu_n,\mu\in\calM_\frakm^+(\varcl{\Om})$, then
		\[\lim_{n\to\infty}\calW_m(\mu_n,\mu)=0\qquad\Longrightarrow\qquad\mu_n\starto\mu\text{ in }\calM(\varcl{\Om}).\]
		\item\label{it:metricPropertyWeak*LSC} If $\mu_n,\nu_n,\mu,\nu\in\calM_\frakm^+(\varcl{\Om})$ such that $\mu_n\starto\mu$ and $\nu_n\starto\nu$ in $\calM(\varcl{\Om})$, then
		\[\calW_m(\mu,\nu)\leq\liminf_{n\to\infty}\calW_m(\mu_n,\nu_n).\]
		\item\label{it:metricPropertySublevels} The sublevel sets of the form
		\[S_C^\calE:=\bigset{u\in\calX_\frakm^M}{\exists z\in\LL^2(\Om)\text{ such that }\calE[u,z]\leq C}\]
		for $C\in(0,\infty)$ are compact in $(\calX_\frakm^M,\calW_m)$.
	\end{enumerate}
\end{lemma}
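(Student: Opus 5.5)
The three parts of Lemma~\ref{le:metricProperties} are essentially results from \cite{DolbeaultNazaretSavare2009,CarrilloLisiniSavareSlepcev2010,LisiniMarigonda2010}, adapted to our setting. The plan is to reduce (i) and (ii) to those references and to derive (iii) from (i), (ii) and the coercivity of $\calE$ in the $u$-variable.

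\textbf{Part (i).} I would use the linear growth bound $m(u)\le C(1+u)$, which follows from concavity (cf.~\eqref{eq:mobilityGrowthEstimates}, and $m$ bounded if $M<\infty$). Take an admissible curve $(\mu,\boldXi)\in\CE_\frakm(0,1;\mu_n\to\mu)$ with action at most $\calW_m^2(\mu_n,\mu)+1/n$. Jensen/Cauchy--Schwarz gives
\[
\int_0^1\norm{\boldXi_t}_{\mathrm{TV}}\dd t\le\Bigl(\int_0^1\Phi(\mu_t,\boldXi_t)\dd t\Bigr)^{1/2}\sup_t\bigl(C(1+\mu_t(\varcl\Om))\bigr)^{1/2}.
\]
The last factor is controlled by the fixed mass. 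Testing the continuity equation with $\varphi\in\CC^1(\varcl\Om)$ then yields
\[
\Bigl|\int\varphi\dd\mu_n-\int\varphi\dd\mu\Bigr|\le\norm{\nabla\varphi}_\infty\,C\,\calW_m(\mu_n,\mu)\to0.
\]
Density of $\CC^1$ in $\CC^0(\varcl\Om)$, together with the uniform mass bound, gives weak-$*$ convergence.

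\textbf{Part (ii).} Take near-optimal curves $(\mu^n,\boldXi^n)$ on $[0,1]$ with bounded action. Step 1: the estimate from (i) gives uniform TV bounds on $\boldXi^n$ and equicontinuity of $t\mapsto\mu^n_t$ in a dual-$\CC^1$ norm. Step 2: Arzel\`a--Ascoli and Banach--Alaoglu then give $\mu^n_t\starto\mu_t$ for all $t$ and $\boldXi^n\starto\boldXi$ as measures on $[0,1]\times\varcl\Om$. Step 3: the continuity equation and the endpoint values pass to the limit. Step 4: the lower semicontinuity of the convex, $1$-homogeneous-in-recession integral functional $\int\Phi$ under joint weak-$*$ convergence (Reshetnyak/Ambrosio--Fusco--Pallara type, as in \cite[Thm.~2.3]{DolbeaultNazaretSavare2009}) concludes. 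This lower semicontinuity is the technical core, and I would cite it rather than reprove it.

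\textbf{Part (iii).} If $\calE[u,z]\le C$, then
\[
\tfrac12\norm{\nabla u}_{\LL^2}^2\le C-\int F(u)-\tfrac12\norm{z-K(u)}^2\le C+c\bigl(1+\norm{u}_{\LL^2}^2\bigr),
\]
using~\eqref{eq:potentialDecompositionBounds}, the nonnegativity of $\Fconv$, and the fact that $\Faff$ is affine. With Poincar\'e--Wirtinger and the fixed mean, this gives $\norm{\nabla u}^2\le C'+\tfrac14\norm{\nabla u}^2$ after Young's inequality. Hence $S_C^\calE$ is bounded in $\HH^1(\Om)$, so it is precompact in $\LL^2$, hence in $\LL^1$. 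It is also closed under this convergence, by lower semicontinuity of $\calE$ in $(u,z)$ with $z$ weak in $\LL^2$.

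The main obstacle is upgrading $\LL^1$ (narrow) compactness to $\calW_m$-compactness, since for $M=\infty$ narrow convergence need not imply $\calW_m$-convergence. Here I would use the $\HH^1$ bound: for $u_n\to u$ in $\LL^2$ with $\sup\norm{u_n}_{\HH^1}<\infty$, I would construct an explicit curve, namely linear interpolation $\mu_t=(1-t)u+tu_n$ with flux $\boldXi=\nabla\psi$, where $-\Delta\psi=u_n-u$ under Neumann conditions. Concavity gives $m(\mu_t)\ge(1-t)m(u)+tm(u_n)$, so the action is bounded by
\[
\int\frac{|\nabla\psi|^2}{m(\mu_t)},
\]
which is problematic where $m$ degenerates. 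I would therefore instead first mollify by the Neumann heat flow, $u\mapsto S_\ve u$. The square-root estimate~\eqref{eq:square-root.est} controls $\calW_m(u,S_\ve u)\lesssim\sqrt\ve\,\norm{u}_{\HH^1}$ uniformly, the maximum principle keeps values in $\rangeM$, and densities $S_\ve u$ are bounded away from the degeneracy points by positivity of the heat kernel. Between the mollified states the linear interpolation argument works, with $m(\mu_t)\ge c_\ve>0$. A triangle inequality then gives $\calW_m(u_n,u)\to0$. Alternatively, I would cite \cite[Thm.~5.6]{DolbeaultNazaretSavare2009} or \cite{LisiniMatthesSavare2012}, where this step appears.
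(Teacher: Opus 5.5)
For (i) and (ii) your plan matches the paper, which simply cites \cite[Theorems 5.5, 5.6]{DolbeaultNazaretSavare2009} and \cite[Theorem 2]{LisiniMarigonda2010}. Your Cauchy--Schwarz bound $\|\boldXi_t\|_{\mathrm{TV}}\le\Phi(\mu_t,\boldXi_t)^{1/2}\bigl(C(|\Omega|+\mu_t(\overline{\Omega}))\bigr)^{1/2}$, applied to both the absolutely continuous and the recession part, and your compactness/lower-semicontinuity sketch are the arguments given there.

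For (iii) you take a genuinely different route. The paper gets compactness in $\LL^2(\Omega)$ from \eqref{eq:freeEnergyLowerBound}. For $M=+\infty$ it then uses the comparison $\calW_m(u_0,u_1)\lesssim\calW_2(u_0,u_1)+\norm{u_0-u_1}_{\dot\WW^{-1,2}(\Om)}$ and the fact that $\calW_2$ metrises narrow convergence on $\overline{\Omega}$; for $M<+\infty$ it uses \cite[Theorem 6]{LisiniMarigonda2010}. You instead pass through heat-flow regularisations, and this works. By Lemma~\ref{le:nabla2mEstimate} and \eqref{eq:DirichletEnergyDissipationOfHeatEquation},
\[
\calW_m^2(u,S_\ve u)\le\ve\int_0^\ve\int_\Omega\frac{|\nabla v_t|^2}{m(v_t)}\,dx\,dt\le C_3\,\ve\Bigl(\sqrt{\ve/2}\,\|\nabla u\|_{\LL^2}+\ve\|\nabla u\|_{\LL^2}^2\Bigr),
\]
which is uniformly small on $\HH^1$-bounded sets. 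The rate is $\ve^{3/4}$ rather than your $\sqrt{\ve}$, which is immaterial. A uniform lower bound on the Neumann heat kernel keeps $S_\ve u\in[\delta_\ve,M-\delta_\ve]$ for all $u\in\calX_\frakm^M$. The constant flux $\nabla\psi$ along the linear interpolation then gives $\calW_m(S_\ve u_n,S_\ve u)\le c_\ve^{-1/2}C_{\mathrm{PW}}\|u_n-u\|_{\LL^2}$. Your argument is longer but explicit, uses only tools already in the paper, and treats $M<+\infty$ and $M=+\infty$ in one stroke. It also avoids having to produce a single competitor curve whose flux is controlled simultaneously in the Wasserstein and the $\dot\WW^{-1,2}$ action. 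The paper's argument is a two-line reduction to that comparison inequality and classical facts on $\calW_2$.

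One step fails as written: your $\HH^1$-bound on $S_C^\calE$. Poincar\'e--Wirtinger gives $\|u\|_{\LL^2}^2\le C_{\mathrm{PW}}^2\|\nabla u\|_{\LL^2}^2+\frakm^2|\Omega|$. Your inequality therefore becomes $\frac12\|\nabla u\|^2\le C+c\,C_{\mathrm{PW}}^2\|\nabla u\|^2$, with both sides quadratic. Since $c$ (coming from $c_2$ in \eqref{eq:potentialDecompositionBounds}) need not be small, Young's inequality has nothing to absorb, and for $M=+\infty$ the estimate does not close.

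The missing ingredient is that non-negativity and the mass constraint fix $\|u\|_{\LL^1}$. The Gagliardo--Nirenberg inequality $\|u\|_{\LL^2}\le C\|\nabla u\|_{\LL^2}^{d/(d+2)}\|u\|_{\LL^1}^{2/(d+2)}+C\|u\|_{\LL^1}$ then yields the sub-quadratic bound $\|u\|_{\LL^2}^2\lesssim1+\|\nabla u\|_{\LL^2}^{2d/(d+2)}$, which Young can absorb. This is exactly \eqref{eq:freeEnergyLowerBound} in Lemma~\ref{le:freeEnergyProperties}, on which the paper's proof of (iii) rests. It also supplies the $\LL^2$-bound on $z$ that your closedness argument uses tacitly. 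For $M<+\infty$ the issue does not arise, since $\|u\|_{\LL^2}^2\le M^2|\Omega|$. If you replace your coercivity computation by \eqref{eq:freeEnergyLowerBound}, your proof of (iii) is complete.
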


For assertions~\ref{it:metricPropertyConvergence} and~\ref{it:metricPropertyWeak*LSC} we refer to~\cite[Theorems 5.5, 5.6]{DolbeaultNazaretSavare2009} (for $M=+\infty$) and~\cite[Theorem 2 (b), (c)]{LisiniMarigonda2010} (if $M<+\infty$).
To see~\ref{it:metricPropertySublevels} for $M=+\infty$, we first note that $S_C^\calE$ is compact in $\LL^2(\Om)$ by~\eqref{eq:freeEnergyLowerBound} and Sobolev embedding. Moreover, by the estimates~\eqref{eq:mobilityGrowthEstimates} for $m$, it is easy to see that, 
\[\calW_m(u_0,u_1)\lesssim\calW_2(u_0,u_1)+\norm{u_0-u_1}_{\dot\WW^{-1,2}(\Om)},\]
where $\calW_2$ denotes the classical $\LL^2$-Monge--Kantorovich distance. Since $\calW_2$ is equivalent to the narrow convergence on the compact set $\varcl{\Om}$, this yields the desired compactness property of $S_C^\calE$.

\subsection[The flow interchange technique and \texorpdfstring{$\lambda$}{lambda}-flows]{The flow interchange technique and \texorpdfstring{$\boldsymbol{\lambda}$}{lambda}-flows}\label{ssec:flow-interchange}

The flow interchange method, introduced by Matthes et al.~\cite{MatthesMcCannSavare2009}, compares a minimising movement step with the short-time evolution generated by an auxiliary functional $\calF$. Under suitable assumptions on this auxiliary flow, the comparison yields a discrete estimate for the dissipation of the energy along that flow; summing the estimate over the time steps gives a priori bounds. In the present work, an adapted flow interchange argument for our semi-implicit scheme provides key regularity estimates for the discrete approximants and helps deriving the associated time-discrete equations. It does not, however, directly yield the uniform $\LL^2(0,T;\HH^2(\Om))$-bound for the De\,Giorgi interpolant; we obtain that bound instead from a coercivity property of the energy slope.

\begin{definition}[$\lambda$-flows]
	Let $\calF:\calX_\frakm^M\to(-\infty,+\infty]$ be a proper, lower semicontinuous functional.
	\begin{enumerate}
		\item A \emph{semigroup} on a subset $X_0\subseteq\calX_\frakm^M$ is a family of maps $S^t:X_0\to X_0$, parametrised by $t\in[0,\infty)$, such that $S^0=\mathrm{id}_{X_0}$ and $S^{t+s}(u)=S^t(S^s(u))$ for every $u\in X_0$ and $s,t\geq0$.
		\item We call a semigroup $(S^t)_{t\geq0}$ on $\dom(\calF)$ a \emph{$\lambda$-flow for $\calF$} for some $\lambda\in\R$ if it satisfies the \emph{Evolutionary Variational Inequality} (EVI)
		\begin{align}\label{eq:EVI}
			\frac12\limsup_{t\searrow0}\frac{\calW_m^2(S^t(u),v)-\calW_m^2(u,v)}{t}+\frac{\lambda}{2}\calW_m^2(u,v)+\calF(u)\leq\calF(v)\tag{\textsf{EVI}}
		\end{align}
		for every $u,v\in\dom(\calF)$ with $\calW_m(u,v)<+\infty$.
	\end{enumerate}
\end{definition}

Testing \eqref{eq:EVI} with $v:=u$, one obtains $\lim_{t\searrow0}\calW_m(S^t(u),u)=0$, i.e.\ every $\lambda$-flow for $\calF$ is a continuous semigroup. This property, together with the triangle inequality, ensures that the `limsup' in \eqref{eq:EVI} takes a finite value for all $v\in\dom(\calF)$ with $\calW_m(u,v)<+\infty$.\\

Two prominent examples of such flows relevant in this work are the heat flow, associated with the internal energy~\eqref{eq:defInternalEnergy}, and the nonlinear drift-diffusion flow, corresponding to the regularised potential energy~\eqref{eq:defPotentialEnergyReg}. In the present setting, the flow interchange lemma takes the following form:

\begin{lemma}\label{le:flowInterchange}
	Let $\calF:\calX_\frakm^M\to(-\infty,+\infty]$ be proper, lower semicontinuous and $(S_\calF^t)_{t\geq0}$ a $\lambda$-flow for $\calF$ for some $\lambda\in\R$. Moreover, let $(\boldu_\kappa^n)_{n\in\N}$ be the minimisers of \eqref{eq:TIM}. Whenever $u_\kappa^n\in\dom(\calF)$ then
	\begin{equation}\label{eq:flowInterchangeEstimate}
		\calF(u_\kappa^n)-\calF(u_\kappa^{n-1})\leq\kappa\liminf_{s\searrow0}\Bigparen{\frac{\calE[S_\calF^s(u_\kappa^n),z_\kappa^n]-\calE[u_\kappa^n,z_\kappa^n]}{s}}-\frac{\lambda}{2}\calW_m^2(u_\kappa^n,u_\kappa^{n-1}).
	\end{equation}
\end{lemma}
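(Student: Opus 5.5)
The argument compares the minimality of $\boldu_\kappa^n$ in \eqref{eq:TIM} with the competitor obtained by moving only its phase-field component along the flow $S_\calF^s$, and then invokes \eqref{eq:EVI} with $v:=u_\kappa^{n-1}$. Fix $n$ with $u_\kappa^n\in\dom(\calF)$ and set, for $s>0$, $\boldu^s:=(S_\calF^s(u_\kappa^n),z_\kappa^n)$. Since $S_\calF^s$ maps $\dom(\calF)\subseteq\calX_\frakm^M$ into itself, $\boldu^s$ is admissible. Minimality of $\boldu_\kappa^n$ for $\Psi_{\kappa,\boldu_\kappa^{n-1}}$ gives $\Psi_{\kappa,\boldu_\kappa^{n-1}}(\boldu_\kappa^n)\le\Psi_{\kappa,\boldu_\kappa^{n-1}}(\boldu^s)$.

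The $z$-component is unchanged, so in $\calD^2_{u_\kappa^{n-1}}$ the weighted term $\bignorm{\sqrt{\tau(u_\kappa^{n-1})}(z_\kappa^n-z_\kappa^{n-1})}_{\LL^2(\Om)}^2$ appears on both sides and cancels. Here it is essential that the weight is frozen at the previous step $u_\kappa^{n-1}$, so that it does not depend on the competitor; this is exactly where the semi-implicit structure is used. We are left with
\[
\frac{1}{2\kappa}\bigparen{\calW_m^2(u_\kappa^n,u_\kappa^{n-1})-\calW_m^2(S_\calF^s(u_\kappa^n),u_\kappa^{n-1})}\le\calE[S_\calF^s(u_\kappa^n),z_\kappa^n]-\calE[u_\kappa^n,z_\kappa^n].
\]
Dividing by $s>0$ and taking $\liminf_{s\searrow0}$ puts $\kappa\liminf_{s\searrow0}\frac{1}{s}\bigparen{\calE[S_\calF^s(u_\kappa^n),z_\kappa^n]-\calE[u_\kappa^n,z_\kappa^n]}$ on the right. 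On the left we obtain $-\frac12\limsup_{s\searrow0}\frac1s\bigparen{\calW_m^2(S_\calF^s(u_\kappa^n),u_\kappa^{n-1})-\calW_m^2(u_\kappa^n,u_\kappa^{n-1})}$.

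Next apply \eqref{eq:EVI} with $u:=u_\kappa^n$ and $v:=u_\kappa^{n-1}$. This needs three facts. First, $u_\kappa^{n-1}\in\dom(\calF)$; if instead $\calF(u_\kappa^{n-1})=+\infty$, the claimed inequality is trivial. Second, $u_\kappa^n\in\dom(\calF)$, which is assumed. Third, $\calW_m(u_\kappa^n,u_\kappa^{n-1})<\infty$, which holds because the minimiser has finite value of $\Psi$, as $\boldu_\kappa^{n-1}$ itself is an admissible competitor with finite energy. The EVI then gives
\[
-\frac12\limsup_{s\searrow0}\frac{\calW_m^2(S_\calF^s(u_\kappa^n),u_\kappa^{n-1})-\calW_m^2(u_\kappa^n,u_\kappa^{n-1})}{s}\ge\calF(u_\kappa^n)-\calF(u_\kappa^{n-1})+\frac\lambda2\calW_m^2(u_\kappa^n,u_\kappa^{n-1}).
\]
Chaining this with the previous inequality, after multiplying the latter by $\kappa$, yields \eqref{eq:flowInterchangeEstimate}.

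The only delicate point is bookkeeping with the $\liminf$ and $\limsup$. The EVI $\limsup$ is finite by the remark following the definition of $\lambda$-flows, so the manipulations are legitimate. If the energy $\liminf$ equals $+\infty$, the inequality holds trivially.
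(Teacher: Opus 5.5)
Your proposal is correct and matches the paper's proof: compare $\boldu_\kappa^n$ with the competitor $(S_\calF^s(u_\kappa^n),z_\kappa^n)$, cancel the frozen-weight $\LL^2$ term, multiply by $\kappa/s$, take the $\liminf$ as $s\searrow0$, and apply \eqref{eq:EVI} with $u=u_\kappa^n$, $v=u_\kappa^{n-1}$. You also handle the case $\calF(u_\kappa^{n-1})=+\infty$ and check that $\calW_m(u_\kappa^n,u_\kappa^{n-1})<\infty$, details the paper leaves implicit.
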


\begin{proof}
	Using the minimising property $\Psi_{\kappa,\boldu_\kappa^{n-1}}(\boldu_\kappa^n)\leq\Psi_{\kappa,\boldu_\kappa^{n-1}}[S_\calF^s(u_\kappa^n),z_\kappa^n]$, we infer
	\begin{multline*}
		\frac{1}{2\kappa}\calW_m^2(u_\kappa^n,u_\kappa^{n-1})+\frac{1}{2\kappa}\Bignorm{\sqrt{\tau(u_\kappa^{n-1})}(z_\kappa^n-z_\kappa^{n-1})}_{\LL^2(\Om)}^2+\calE(\boldu_\kappa^n)\\
		\leq\frac{1}{2\kappa}\calW_m^2(S_\calF^s(u_\kappa^n),u_\kappa^{n-1})+\frac{1}{2\kappa}\Bignorm{\sqrt{\tau(u_\kappa^{n-1})}(z_\kappa^n-z_\kappa^{n-1})}_{\LL^2(\Om)}^2+\calE[S_\calF^s(u_\kappa^n),z_\kappa^n].
	\end{multline*}
	Observing that the norm terms on both sides cancel each other out, and multiplying both sides by $\kappa/s$, we are left with
	\[\frac{1}{2s}\Bigparen{\calW_m^2(u_\kappa^n,u_\kappa^{n-1})-\calW_m^2(S_\calF^s(u_\kappa^n),u_\kappa^{n-1})}\leq\kappa\frac{\calE[S_\calF^s(u_\kappa^n),z_\kappa^n]-\calE(\boldu_\kappa^n)}{s}.\]
	Thus, \eqref{eq:flowInterchangeEstimate} follows by taking the limes inferior $s\searrow0$ and applying \eqref{eq:EVI} to $u=u_\kappa^n$ and $v=u_\kappa^{n-1}$.
\end{proof}

We will apply Lemma~\ref{le:flowInterchange} later to the internal energy $\calF=\calU$ in Subsection~\ref{subsec:H2regularity} to show the $\HH^2$-regularity, and to the regularised potential energy $\calF=\calV_\ve$ in Subsection~\ref{subsec:discreteSystem} to derive the PDE.

\paragraph{The entropy functional.}

The \emph{entropy} functional on $\calX_\frakm^M$ is defined as in \eqref{eq:defEntropy}, i.e.,
\begin{equation}\label{eq:defInternalEnergy}
	\calU(v):=\int_\Om U(v(x))\dd x\in[0,+\infty],
\end{equation}
where $U:\rangeM\to[0,\infty)$ denotes the (uniquely determined) convex function that satisfies $U''=1/m$ and $U(\frakm)=U'(\frakm)=0$, i.e.\
\[U(v):=\int_\frakm^v\frac{v-s}{m(s)}\dd s.\]

Referring to \cite[Lemma 2.3]{LisiniMatthesSavare2012}, $\calU$ is lower semicontinuous, we have $\LL^2(\Om)\cap\calX_\frakm^M\subseteq\dom(\calU)$
and there exists $C_\calU\in(0,\infty)$, depending only on $\Om$, $\frakm$ and $m(\frakm)$, such that
\begin{equation}\label{eq:internalEnergyUpperBound}
	0\leq\calU(v)\leq C_\calU(1+\norm{v}_{\LL^2(\Om)}^2)\quad\text{for all }v\in\calX_\frakm^M.
\end{equation}
It is not difficult to see that the PDE associated with $\calU$ is formally given by the heat equation with homogeneous Neumann boundary conditions
\begin{PDEsystemTag}{\textsf{HE}}\label{eq:heatEquation}
	\PDElineIn{\pa_tv}{\Delta v}{(0,\infty)\times\Om},\\
	\PDElineOn{\nabla v\cdot\n}{0}{(0,\infty)\times\boundary{\Om}},\\
	\PDElineIn{v(0)}{v^0}{\Om}.
\end{PDEsystemTag}

However, proving that $\calU$ indeed generates a $\lambda$-flow $(S_\calU^t)_{t\geq0}$ with trajectories $t\mapsto S_\calU^t(v^0)$ solving~\eqref{eq:heatEquation} weakly for initial data $v^0\in\LL^2(\Om)\cap\calX_\frakm^M$ is rather subtle. Detailed proofs on the existence of a 0-flow for $(\calU,\calW_m)$ based on the Eulerian calculus method can be found in~\cite[Lemma 4.2]{LisiniMatthesSavare2012} and~\cite[Sections 5, 6]{CarrilloLisiniSavareSlepcev2010}.

\paragraph{The potential energy.}

For a potential $V\in\CC^2(\varcl{\Om})$ satisfying $\nabla V\cdot\n=0$ on $\boundary{\Om}$, we define the corresponding \emph{potential energy}
\begin{equation}\label{eq:defPotentialEnergy}
	\calV(v):=\calV_V(v):=\int_\Om V(x)v(x)\dd x.
\end{equation}
It is obvious that $\calV$ is lower semicontinuous in $(\calX_\frakm^M,\calW_m)$. Unfortunately, as pointed out in~\cite[Subsection 2.3]{CarrilloLisiniSavareSlepcev2010}, it is impossible to show that $\calV$ generates a $\lambda$-flow unless $m$ is linear. As a workaround, we therefore follow~\cite{LisiniMatthesSavare2012} and resort to the entropic regularisation
\begin{equation}\label{eq:defPotentialEnergyReg}
	\calV_\ve(v):=\calV(v)+\ve\calU(v).
\end{equation}
Let us briefly explain why this regularisation is useful. For a geodesic $\gamma:[0,1]\to\calX_\frakm^M$ with associated tangential potential $\varphi$, one can formally show
\begin{align*}
	\frac{\dd^2}{\dd s^2}\calV_\ve(\gamma)&\geq\int_\Om m(\gamma)m''(\gamma)\Bigparen{(\nabla\varphi\cdot\nabla\gamma)(\nabla\varphi\cdot\nabla V)-\frac12(\nabla\gamma\cdot\nabla V)\abs{\nabla\varphi}^2}\dd x\\
	&\quad+\int_\Om m(\gamma)m'(\gamma)(\Hess V)\nabla\varphi\cdot\nabla\varphi\dd x-\frac{\ve}{2}\int_\Om m''(\gamma)\abs{\nabla\gamma}^2\abs{\nabla\varphi}^2\dd x\\
	&\overset{!}{\geq}\lambda_\ve\int_\Om m(\gamma)\abs{\nabla\varphi}^2\dd x,
\end{align*}
where the `!'-indicated inequality corresponds to geodesic $\lambda_\ve$-convexity. For $\ve=0$, there are counterexamples showing that no single choice of $\lambda_0$ works uniformly among all geodesics. If, however, $\ve>0$, the desired inequality can be shown via Young's inequality and the assumptions on $m'$ and $m''m$ from~\eqref{eq:mobilityLipschitzConditions}, with
\[\lambda_\ve=-\norm{m'}_\infty\norm{V}_{\CC^2(\Om)}-\frac{9}{8\ve}\norm{V}_{\CC^1(\Om)}^2\norm{m''m}_\infty.\]
As for the heat flow, the existence of a $\lambda_\ve$-flow generated by $\calV_\ve$ is rigorously justified by the Eulerian calculus method, and for initial data $v^0\in\LL^2(\Om)\cap\calX_\frakm^M$, the trajectory $v_t:=S_{\calV_\ve}^t(v^0)$ is a weak solution to the nonlinear drift-diffusion equation
\begin{PDEsystemTag}{\textsf{nDDE}}\label{eq:viscousPotentialEquation}
	\PDElineIn{\pa_tv}{\divv(m(v)\boldzeta)+\ve\Delta v}{(0,\infty)\times\Om},\\
	\PDElineOn{\nabla v\cdot\n}{0}{(0,\infty)\times\boundary{\Om}},\\
	\PDElineIn{v(0)}{v^0}{\Om}
\end{PDEsystemTag}
with the drift field given by $\boldzeta=\nabla V$. We point out that the argument requires the additional conditions~\eqref{eq:mobilityLipschitzConditions} on $m$ as well as the degeneracy $m(0)=0$ (and $m(M)=0$) to ensure that $v$ stays in $\rangeM$. For a full proof we refer the interested reader to~\cite[Proposition 4.6]{LisiniMatthesSavare2012}.

\section{Analysis of the time-discrete scheme}\label{sec:discreteSolutions}

In this section, we prove Theorem~\ref{th:discreteSolutions} by establishing uniform estimates, deriving $\HH^2$-regularity, and proving the approximate system and discrete energy-dissipation inequality.

\subsection{Existence of minimisers and uniform bounds}\label{subsec:existenceOfMinimisers}

We begin by proving existence of minimisers for the scheme in~\eqref{eq:TIM} at each time step and deriving the uniform bounds needed for the subsequent analysis.

\begin{lemma}[Properties of $\calE$]\label{le:freeEnergyProperties}
	The functional $\calE$ is lower semicontinuous on $\calX_\frakm^M\times\LL^2(\Om)$ with respect to the weak* convergence in $\calM(\varcl{\Om})$ and the weak convergence in $\LL^2(\Om)$. Moreover, there exists a constant $C_\calE\in(0,\infty)$, depending only on $\Om$, $\frakm$, $d$ and $c_2$ (see \eqref{eq:potentialDecompositionBounds}), such that
	\begin{equation}\label{eq:freeEnergyLowerBound}
		\frac18\norm{u}_{\HH^1(\Om)}^2+\int_\Om\Fconv(u)\dd x+\frac12\norm{z-K(u)}_{\LL^2(\Om)}^2\leq\calE(\boldu)+C_\calE
	\end{equation}
	for every $\boldu=(u,z)\in\calX_\frakm^M\times\LL^2(\Om)$. In particular, $(\calX_\frakm^M\cap\HH^2(\Om))\times\LL^2(\Om)\subseteq\dom(\calE)\subseteq(\calX_\frakm^M\cap\HH^1(\Om))\times\LL^2(\Om)$ and $\calEmin:=\inf\calE(\boldu)<+\infty$.
\end{lemma}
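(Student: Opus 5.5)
We establish the coercivity bound \eqref{eq:freeEnergyLowerBound} first and then deduce lower semicontinuity and the domain inclusions from it.

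\textbf{Lower bound.} Write $F=\Fconv+\Faff+\Fconc$. By \eqref{eq:potentialDecompositionBounds}, $\Fconc(u)\ge -c_2(1+u^2)$. The affine part $\Faff(u)=a(u-\frakm)$ integrates to zero because of the mass constraint $\int_\Om u\dd x=\frakm\Leb^d(\Om)$. Hence
\[
\calE(\boldu)\ge \tfrac12\norm{\nabla u}_{\LL^2}^2+\int_\Om\Fconv(u)\dd x-c_2\Leb^d(\Om)-c_2\norm{u}_{\LL^2}^2+\tfrac12\norm{z-K(u)}_{\LL^2}^2 .
\]
It remains to control $c_2\norm{u}_{\LL^2}^2$ by a small multiple of $\norm{\nabla u}^2_{\LL^2}$ plus a constant. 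Let $\bar u=\frakm$ denote the mean. Using Poincaré--Wirtinger, $\norm{u-\frakm}_{\LL^2}\le C_P\norm{\nabla u}_{\LL^2}$, alone does not suffice, since $c_2C_P^2$ need not be small. Instead we interpolate $\norm{u-\frakm}_{\LL^2}$ between $\LL^1$ and $\HH^1$. The $\LL^1$-norm is fixed because $u\ge0$: $\norm{u}_{\LL^1}=\frakm\Leb^d(\Om)$. By Gagliardo--Nirenberg (or Ehrling's lemma, using the compact embedding $\HH^1\hookrightarrow\LL^2\hookrightarrow\LL^1$), for every $\delta>0$,
\[
\norm{u-\frakm}_{\LL^2}^2\le \delta\norm{\nabla u}_{\LL^2}^2+C_\delta\norm{u-\frakm}_{\LL^1}^2 ,
\qquad
\norm{u-\frakm}_{\LL^1}\le 2\frakm\Leb^d(\Om).
\]
Choose $\delta$ so that $2c_2\delta\le\tfrac18$ together with the extra $\tfrac18$ needed to convert $\norm{\nabla u}^2$ into $\tfrac18\norm{u}^2_{\HH^1}$. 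The bookkeeping is: $\tfrac12\norm{\nabla u}^2-c_2\norm{u}^2\ge \tfrac18\norm{\nabla u}^2+\tfrac18\norm{u}^2-C$, obtained by applying the interpolation to $(c_2+\tfrac18)\norm{u}^2\le 2(c_2+\tfrac18)(\norm{u-\frakm}^2+\frakm^2\Leb^d(\Om))$. This yields \eqref{eq:freeEnergyLowerBound} with $C_\calE$ depending only on $\Om,\frakm,d,c_2$. This interpolation is the only genuinely non-routine step; it is where non-negativity, hence the $\LL^1$ control, enters essentially.

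\textbf{Lower semicontinuity.} Let $u_n\starto u$ in $\calM(\varcl\Om)$ and $z_n\rightharpoonup z$ in $\LL^2$, and assume without loss of generality $\liminf_n\calE(\boldu_n)<\infty$ along a subsequence realising it. By \eqref{eq:freeEnergyLowerBound}, $(u_n)$ is bounded in $\HH^1$. Passing to a further subsequence, $u_n\rightharpoonup u$ in $\HH^1$ and $u_n\to u$ in $\LL^2$ and a.e., with the limit identified through the weak* convergence. The constraints $u\in\rangeM$ and the mass condition pass to the limit. We then treat each term:
\begin{itemize}
\item The Dirichlet term is weakly lower semicontinuous.
\item $\int\Fconv(u_n)$ is handled by Fatou, since $\Fconv\ge0$ is continuous.
\item $\int\Faff(u_n)$ is constant.
\item $\int\Fconc(u_n)\to\int\Fconc(u)$ by dominated convergence (generalised, with dominating function $c_2(1+u_n^2)$ converging in $\LL^1$) and continuity of $\Fconc$ on $\rangeM$.
\item Since $K$ is Lipschitz, $K(u_n)\to K(u)$ strongly in $\LL^2$, so $z_n-K(u_n)\rightharpoonup z-K(u)$ weakly in $\LL^2$ and the quadratic term is lower semicontinuous.
\end{itemize}

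\textbf{Domain inclusions.} The inclusion $\dom(\calE)\subseteq \HH^1\times\LL^2$ holds by definition. For $u\in\calX_\frakm^M\cap\HH^2$ (or indeed $\HH^1$ with $\int|F(u)|<\infty$), \eqref{eq:potentialUpperBound} gives $|F(u)|\le C_1(1+u^\gamma)$. The restriction $\gamma\le 2d/(d-4)$ for $d\ge5$ is precisely the exponent of the embedding $\HH^2\hookrightarrow\LL^\gamma$ (all finite $\gamma$ being admissible for $d\le4$), and the term $\int(z-K(u))^2$ is finite since $K$ has linear growth. Hence $\calE(\boldu)<\infty$. Finally, the constant function $u\equiv\frakm$ with $z=K(\frakm)$ shows $\calEmin<+\infty$; the bound \eqref{eq:freeEnergyLowerBound} additionally gives $\calEmin\ge -C_\calE$.
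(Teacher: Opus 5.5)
Your proposal is correct and follows essentially the paper's route. The paper also obtains the coercivity bound by interpolating $\norm{u}_{\LL^2(\Om)}$ between $\norm{\nabla u}_{\LL^2(\Om)}$ and the mass-fixed norm $\norm{u}_{\LL^1(\Om)}=\frakm\Leb^d(\Om)$, using an explicit Gagliardo--Nirenberg inequality that after Young's inequality is exactly your Ehrling-type bound. It proves lower semicontinuity just as you do, via $\HH^1$-compactness, Fatou's lemma for $\Fconv$, the quadratic lower bound on $\Fconc$, and $K(u_n)\to K(u)$ in $\LL^2(\Om)$.

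One harmless imprecision: $\int_\Om\Faff(u)\dd x$ equals the constant $F(\frakm)\Leb^d(\Om)$, not zero, unless one normalises $F(\frakm)=0$. This only shifts the value of $C_\calE$.
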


\begin{proof}
	The energy estimate~\eqref{eq:freeEnergyLowerBound} can be shown precisely as in~\cite[Lemma 2.4]{LisiniMatthesSavare2012} by using \eqref{eq:potentialDecompositionBounds} and the interpolation estimate
	\[\norm{u}_{\LL^2(\Om)}\leq C\norm{\nabla u}_{\LL^2(\Om)}^{d/(d+2)}\norm{u}_{\LL^1(\Om)}^{2/(d+2)}+C\norm{u}_{\LL^1(\Om)}\]
	for a constant $C=C(\Om,d)>0$.
	For lower semicontinuity, we let $\boldu_n,\boldu\subset\calX_\frakm^M\times\LL^2(\Om)$, $n\in\N$ satisfy $u_n\starto u$ in $\calM(\varcl{\Om})$ and $z_n\weakto z$ in $\LL^2(\Om)$ as $n\to\infty$. It is not restrictive to assume $E:=\liminf_{n\to\infty}\calE(\boldu_n)<+\infty$ and $u_n\in\HH^1(\Om)$, $E=\lim_{n\to\infty}\calE(\boldu_n)$ and $\calE(\boldu_n)\leq E+1$ for every $n\in\N$. From~\eqref{eq:freeEnergyLowerBound} we get $u\in\HH^1(\Om)$, $u_n\weakto u$ in $\HH^1(\Om)$, $u_n\to u$, $K(u_n)\to K(u)$ in $\LL^2(\Om)$ and $u_n\to u$ pointwise almost everywhere. Therefore, lower semicontinuity for $\calE$ follows from weak lower semicontinuity of the norms, Fatou's lemma and~\eqref{eq:potentialDecompositionBounds}.
\end{proof}

The next lemma tells us that minimisers of~\eqref{eq:TIM} and~\eqref{eq:deGiorgiInterpolant} exist.

\begin{lemma}\label{le:existenceTIM}
	Fix $\boldv=(v,p)\in\dom(\calE)$, $w\in\calX_\frakm^M$ and $\kappa>0$. Then the minimisation problem
	\begin{equation}
		\min_{\boldu}\Psi_{\kappa,\boldv}^w(\boldu),\quad\text{where }\Psi_{\kappa,\boldv}^w(\boldu):=\frac{1}{2\kappa}\calD_w^2(\boldu,\boldv)+\calE(\boldu)
	\end{equation}
	among $\boldu=(u,z)\in\calX_\frakm^M\times\LL^2(\Om)$ has a solution that attains a finite value.
\end{lemma}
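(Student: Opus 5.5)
We argue by the direct method of the calculus of variations. First, $\Psi_{\kappa,\boldv}^w$ is proper: choosing $\boldu=\boldv$ gives $\calD_w(\boldv,\boldv)=0$, so $\Psi_{\kappa,\boldv}^w(\boldv)=\calE(\boldv)<+\infty$. Second, it is bounded from below. Indeed, $\calD_w^2\ge0$, and by~\eqref{eq:freeEnergyLowerBound} we have $\calE\ge -C_\calE$, because $\Fconv\ge0$ by~\eqref{eq:potentialDecompositionBounds}. Hence $\inf\Psi_{\kappa,\boldv}^w$ is finite, and we may pick a minimising sequence $\boldu_n=(u_n,z_n)$ with $\Psi_{\kappa,\boldv}^w(\boldu_n)\le \calE(\boldv)+1$ for all $n$.

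Next, we extract compactness. Since $\calE(\boldu_n)\le \calE(\boldv)+1+$ bounded terms, \eqref{eq:freeEnergyLowerBound} gives a uniform bound on $\norm{u_n}_{\HH^1(\Om)}$ and on $\norm{z_n-K(u_n)}_{\LL^2(\Om)}$. Because $K$ is Lipschitz with $K(0)=0$, the sequence $K(u_n)$ is bounded in $\LL^2(\Om)$, and therefore $z_n$ is bounded in $\LL^2(\Om)$. (Alternatively, one can use the $\tau(w)\ge\taumin$ weighted term in $\calD_w$ together with $p\in\LL^2(\Om)$.) Passing to a subsequence, we obtain
\begin{itemize}
\item $u_n\weakto u$ in $\HH^1(\Om)$ and $u_n\to u$ in $\LL^2(\Om)$ and almost everywhere, by Rellich;
\item $z_n\weakto z$ in $\LL^2(\Om)$.
\end{itemize}
The limit is admissible: the pointwise constraint $u\in\rangeM$ and the mass constraint are preserved under $\LL^1$-convergence, so $u\in\calX_\frakm^M$. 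Moreover, strong $\LL^1$-convergence implies $u_n\Leb^d\starto u\Leb^d$ in $\calM(\varcl{\Om})$. Alternatively, the sublevel compactness of Lemma~\ref{le:metricProperties}\ref{it:metricPropertySublevels} yields the same conclusion.

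It remains to establish lower semicontinuity of each term.
\begin{itemize}
\item The energy $\calE$ is lower semicontinuous under exactly this convergence by Lemma~\ref{le:freeEnergyProperties}.
\item For the transport term, Lemma~\ref{le:metricProperties}\ref{it:metricPropertyWeak*LSC}, applied with the constant sequence $\nu_n=v$, gives $\calW_m^2(u,v)\le\liminf_n\calW_m^2(u_n,v)$.
\item The weighted term $\norm{\sqrt{\tau(w)}(z-p)}_{\LL^2(\Om)}^2$ is a continuous convex quadratic form, since $\tau(w)$ is bounded between $\taumin$ and $\taumax$ by~\eqref{eq:assumptionTau}. It is therefore weakly lower semicontinuous in $z$.
\end{itemize}
Since the $\liminf$ of a sum dominates the sum of the $\liminf$s, we get $\Psi_{\kappa,\boldv}^w(\boldu)\le\liminf_n\Psi_{\kappa,\boldv}^w(\boldu_n)=\inf\Psi_{\kappa,\boldv}^w$. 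Hence $\boldu$ is a minimiser, and its value is finite because it is bounded by $\calE(\boldv)$.

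The only delicate point is ensuring that the convergence obtained is compatible with both the lower semicontinuity of $\calE$ and that of $\calW_m$. This is settled by the $\HH^1$ bound, which gives strong $\LL^2$- and almost-everywhere convergence of $u_n$. No finiteness of $\calW_m$ along the sequence needs to be assumed a priori, since the choice $\boldu=\boldv$ already provides a finite comparison value.
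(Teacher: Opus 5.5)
Your proposal is correct and follows essentially the same route as the paper's proof. Both use the direct method: the infimum is finite because $\calEmin\le\inf\Psi_{\kappa,\boldv}^w\le\calE(\boldv)$, a minimising sequence is weakly compact in $\HH^1(\Om)\times\LL^2(\Om)$ by~\eqref{eq:freeEnergyLowerBound}, and one concludes by lower semicontinuity of $\calE$ (Lemma~\ref{le:freeEnergyProperties}), of $\calW_m$ (Lemma~\ref{le:metricProperties}~\ref{it:metricPropertyWeak*LSC}) and of the weighted $\LL^2$-norm. You also correctly supply two details the paper leaves implicit: the $\LL^2$-bound on $z_n$ via the Lipschitz continuity of $K$, and the preservation of the constraint $u\in\calX_\frakm^M$ under strong $\LL^1$-convergence.
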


\begin{proof}[Proof (via direct method).]
	Since $\calEmin\leq E:=\inf\Psi_{\kappa,\boldv}^w\leq\Psi_{\kappa,\boldv}^w(\boldv)=\calE(\boldv)<\infty$, the infimum is finite.
	In particular, we find an infimising sequence $(\boldu_n)_{n\in\N}\subset\calX_\frakm^M\times\LL^2(\Om)$ such that $E=\lim_{n\to\infty}\Psi_{\kappa,\boldv}^w(\boldu_n)$ and $\sup_{n\in\N}\Psi_{\kappa,\boldv}^w(\boldu_n)<\infty$. From the lower bound of $\calE$ in~\eqref{eq:freeEnergyLowerBound}, we obtain a limit $\boldu=(u,z)\in\HH^1(\Om)\times\LL^2(\Om)$ such that $u_n\weakto u$ in $\HH^1(\Om)$ and $z_n\weakto z$ in $\LL^2(\Om)$, in particular $u_n\starto u$ in $\calM(\varcl{\Om})$. The lower semicontinuity of $\calE$ from Lemma~\ref{le:freeEnergyProperties} and lower semicontinuity of the weighted $\LL^2$-norm and of $\calW_m$ from Lemma~\ref{le:metricProperties}~\ref{it:metricPropertyWeak*LSC} ensure that $\Psi_{\kappa,\boldv}^w(\boldu)=E$.
\end{proof}

In the setting of the previous lemma, the minimising property always implies
\begin{equation}\label{eq:TIMminimisingProperty}
	\calE(\boldu)+\frac{1}{2\kappa}\calD_w^2(\boldu,\boldv)\leq\calE(\boldv).
\end{equation}
In particular, if we insert the minimisers $(\boldu_\kappa^n)_{n\in\N}$ from scheme~\eqref{eq:TIM}, we have via telescoping
\begin{equation}\label{eq:TIMTelescopeEstimate}
	\calE(\boldu_\kappa^n)+\frac{1}{2\kappa}\sum_{k=1}^n\calD_{u_\kappa^{k-1}}^2(\boldu_\kappa^{k-1},\boldu_\kappa^k)\leq\calE(\boldu^0).
\end{equation}
Let $\olu_\kappa$, $\ulu_\kappa$, $\wtu_\kappa$, and $\olz_\kappa$, $\ulz_\kappa$, $\whz_\kappa$, $\wtz_\kappa$ be defined as in~\eqref{eq:piecewiseInterpolants} and~\eqref{eq:deGiorgiInterpolant}. We now derive the basic uniform estimates for these interpolants from~\eqref{eq:TIMTelescopeEstimate}, using the coercivity of $\calE$.

\begin{lemma}[Basic uniform estimates]\label{le:aPrioriEasy}
	There exists a constant $C_2\in(0,\infty)$, only depending on $C_\calE$, $\calEmin$, $\calE(\boldu^0)$, $\taumin$ and $\Amax$, such that
	\begin{subequations}
		\begin{align}
			\norm{\olu_\kappa(t)}_{\HH^1(\Om)}+\norm{\ulu_\kappa(t)}_{\HH^1(\Om)}+\norm{\wtu_\kappa(t)}_{\HH^1(\Om)}&\leq C_2\label{eq:aPrioriLIH1u}\\
			\calW_m(\olu_\kappa(t),\ulu_\kappa(t))+\calW_m(\olu_\kappa(t),\wtu_\kappa(t))&\leq C_2\sqrt{\kappa}\label{eq:aPrioriInterpolantsU}\\
			\calW_m(\olu_\kappa(s),\olu_\kappa(t))&\leq C_2\sqrt{\kappa+\abs{s-t}}\label{eq:aPrioriCauchyWu}\\
			\norm{\olz_\kappa(t)}_{\LL^2(\Om)}+\norm{\ulz_\kappa(t)}_{\LL^2(\Om)}+\norm{\whz_\kappa(t)}_{\LL^2(\Om)}+\norm{\wtz_\kappa(t)}_{\LL^2(\Om)}&\leq C_2\label{eq:aPrioriLIL2z}\\
			\norm{\olz_\kappa(t)-\ulz_\kappa(t)}_{\LL^2(\Om)}+\norm{\olz_\kappa(t)-\whz_\kappa(t)}_{\LL^2(\Om)}+\norm{\olz_\kappa(t)-\wtz_\kappa(t)}_{\LL^2(\Om)}&\leq C_2\sqrt{\kappa}\label{eq:aPrioriInterpolantsZ}\\
			\norm{\dot\whz_\kappa}_{\LL^2([0,\infty);\LL^2(\Om))}&\leq C_2\label{eq:aPrioriL2L2dotz}
		\end{align}
	\end{subequations}
	for every $s,t\in[0,\infty)$.
\end{lemma}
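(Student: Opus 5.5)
The plan is to read everything off the telescoping estimate \eqref{eq:TIMTelescopeEstimate}, the minimising property \eqref{eq:TIMminimisingProperty} (applied both to the scheme and to the De\,Giorgi step), and the coercivity bound \eqref{eq:freeEnergyLowerBound}.

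\emph{Energy bounds.} From \eqref{eq:TIMTelescopeEstimate} we get $\calE(\boldu_\kappa^n)\le\calE(\boldu^0)$ for all $n$. For the De\,Giorgi interpolant, comparing with $\boldu_\kappa^n$ in \eqref{eq:deGiorgiInterpolant} gives $\calE(\wtbfu_\kappa(t))+\frac{1}{2\theta}\calD^2_{u_\kappa^n}(\wtbfu_\kappa(t),\boldu_\kappa^n)\le\calE(\boldu_\kappa^n)\le\calE(\boldu^0)$. Inserting these bounds into \eqref{eq:freeEnergyLowerBound} controls $\norm{u}_{\HH^1}^2$ and $\norm{z-K(u)}_{\LL^2}^2$ by $8(\calE(\boldu^0)+C_\calE)$. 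We use $\Fconv\ge0$, and for the lower energy level we use $\calEmin$ where needed.

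\emph{$\LL^2$ bounds on $z$.} Since $\abs{K(u)}\le\Amax\abs u$, we have $\norm{K(u)}_{\LL^2}\le\Amax\norm{u}_{\LL^2}$. The triangle inequality then yields \eqref{eq:aPrioriLIH1u} and \eqref{eq:aPrioriLIL2z}. The bound for $\whz_\kappa$ follows by convexity, since it is a convex combination of the nodal values.

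\emph{Interpolant distances.} From the single-step inequality \eqref{eq:TIMminimisingProperty},
\[
\calD^2_{u_\kappa^{n}}(\boldu_\kappa^{n+1},\boldu_\kappa^n)\le2\kappa\bigl(\calE(\boldu_\kappa^n)-\calE(\boldu_\kappa^{n+1})\bigr)\le2\kappa(\calE(\boldu^0)-\calEmin).
\]
Because $\tau\ge\taumin$, this bounds both $\calW_m(u_\kappa^{n+1},u_\kappa^n)$ and $\norm{z_\kappa^{n+1}-z_\kappa^n}_{\LL^2}$ by $C\sqrt\kappa$. This gives the $\olu$--$\ulu$ and $\olz$--$\ulz$ parts of \eqref{eq:aPrioriInterpolantsU} and \eqref{eq:aPrioriInterpolantsZ}. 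The difference $\olz_\kappa-\whz_\kappa$ equals a fraction of $z^{n+1}_\kappa-z^n_\kappa$, so it obeys the same bound. For the De\,Giorgi interpolant, the inequality above gives $\calD^2_{u_\kappa^n}(\wtbfu_\kappa(t),\boldu_\kappa^n)\le2\theta(\calE(\boldu^0)-\calEmin)\le C\kappa$. Combined with the previous step via the triangle inequality (for $\calW_m$, and in $\LL^2$ for $z$), this bounds the distance to $\olbfu_\kappa(t)=\boldu_\kappa^{n+1}$.

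\emph{Hölder estimate \eqref{eq:aPrioriCauchyWu}.} Summing \eqref{eq:TIMTelescopeEstimate} gives $\sum_k\calW_m^2(u_\kappa^{k-1},u_\kappa^k)\le2\kappa(\calE(\boldu^0)-\calEmin)$. For $s<t$ the number of steps between $\lceil s/\kappa\rceil$ and $\lceil t/\kappa\rceil$ is at most $(t-s)/\kappa+1$. The triangle inequality and Cauchy--Schwarz then give
\[
\calW_m(\olu_\kappa(s),\olu_\kappa(t))\le\Bigl(\tfrac{t-s}{\kappa}+1\Bigr)^{1/2}\bigl(2\kappa(\calE(\boldu^0)-\calEmin)\bigr)^{1/2}\le C\sqrt{\kappa+\abs{t-s}}.
\]

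\emph{Time derivative \eqref{eq:aPrioriL2L2dotz}.} On each interval $(n\kappa,(n+1)\kappa)$ we have $\dot\whz_\kappa=(z^{n+1}_\kappa-z^n_\kappa)/\kappa$. Hence $\int_0^\infty\norm{\dot\whz_\kappa}^2_{\LL^2}\,\dd t=\sum_n\norm{z^{n+1}_\kappa-z^n_\kappa}^2_{\LL^2}/\kappa$. Using $\tau\ge\taumin$, this is at most
\[
\frac{1}{\taumin\kappa}\sum_n\calD^2_{u^n_\kappa}(\boldu_\kappa^n,\boldu_\kappa^{n+1})\le\frac{2(\calE(\boldu^0)-\calEmin)}{\taumin},
\]
uniformly in $\kappa$.

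No step is a real obstacle; the only care needed is that the constants depend just on the listed quantities. In particular, $\calEmin$ (finite by Lemma~\ref{le:freeEnergyProperties}) is what allows the infinite telescoping sums to be bounded uniformly in time.
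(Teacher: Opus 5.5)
Your proposal is correct and follows essentially the same route as the paper: coercivity~\eqref{eq:freeEnergyLowerBound} combined with the minimising and telescoping inequalities gives the $\HH^1$ and $\LL^2$ bounds. The one-step and De\,Giorgi minimality, together with the triangle inequality and $\tau\ge\taumin$, give the interpolant distances. Cauchy--Schwarz over at most $1+\abs{s-t}/\kappa$ steps gives the H\"older estimate, and the weighted telescoping sum gives the $\dot\whz_\kappa$ bound. Here $\calEmin\ge -C_\calE$, which follows from~\eqref{eq:freeEnergyLowerBound}, is what keeps the telescoping constants uniform.
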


\begin{proof}
	Fix $\kappa>0$, $n\in\N_0$, $0<\theta\leq\kappa$ and put $t:=n\kappa+\theta$. From Lemma~\ref{le:freeEnergyProperties}, \eqref{eq:TIMminimisingProperty} and the telescope estimate~\eqref{eq:TIMTelescopeEstimate} we obtain
	\[
		\frac18\norm{\wtu_\kappa(t)}_{\HH^1(\Om)}^2+\frac12\norm{\wtz_\kappa(t)-K(\wtu_\kappa(t))}_{\LL^2(\Om)}^2-C_\calE\leq\calE(\wtbfu_\kappa(t))\leq\calE(\boldu_\kappa^n)\leq\calE(\boldu^0).
	\]
	The estimates \eqref{eq:aPrioriLIH1u} and \eqref{eq:aPrioriLIL2z} easily follow from there. Now let additionally $\ell\in\N_0$, $0<\vartheta\leq\kappa$, $s:=\ell\kappa+\vartheta$. Then, using \eqref{eq:TIMTelescopeEstimate} once again,
	\begin{align*}
		\calW_m^2(\olu_\kappa(s),\olu_\kappa(t))&=\calW_m^2(u_\kappa^{\ell+1},u_\kappa^{n+1})\leq\biggparen{\sum_{k=\ell\wedge n+1}^{\ell\vee n}\calW_m(u_\kappa^k,u_\kappa^{k+1})}^2\\
		&\leq\abs{\ell-n}\sum_{k=\ell\wedge n+1}^{\ell\vee n}\calW_m^2(u_\kappa^k,u_\kappa^{k+1})\leq2\kappa\abs{\ell-n}(\calE(\boldu^0)-\calEmin).
	\end{align*}
	Noting that $\abs{\ell-n}\leq1+\frac{1}{\kappa}\abs{s-t}$, \eqref{eq:aPrioriCauchyWu} is established. Similarly we show \eqref{eq:aPrioriL2L2dotz}, namely
	\[\norm{\dot{\whz}_\kappa}_{\LL^2(0,t;\LL^2(\Om))}^2\leq\frac{1}{\taumin}\sum_{k=1}^{n+1}\kappa\Bignorm{\sqrt{\tau(u_\kappa^{k-1})}\frac{z_\kappa^k-z_\kappa^{k-1}}{\kappa}}_{\LL^2(\Om)}^2\leq\frac{2}{\taumin}(\calE(\boldu^0)-\calEmin).\]
	The estimates \eqref{eq:aPrioriInterpolantsU} and \eqref{eq:aPrioriInterpolantsZ} are easily obtained from~\eqref{eq:TIMminimisingProperty}, e.g., \[\calW_m(\ulu_\kappa(t),\wtu_\kappa(t))\leq\sqrt{\theta}\sqrt{2(\calE(\boldu^0)-\calEmin)}\] and $\norm{\olz_\kappa(t)-\whz_\kappa(t)}_{\LL^2(\Om)}\leq(1-\frac{\theta}{\kappa})\sqrt{\kappa}\sqrt{\frac{2(\calE(\boldu^0)-\calEmin)}{\taumin}}$.
\end{proof}

\subsection[\texorpdfstring{$\HH^2(\Om)$}{H2} regularity for \texorpdfstring{$u$}{u}]{\texorpdfstring{$\boldsymbol{\HH^2(\Om)}$}{H2} regularity for \texorpdfstring{$u$}{u}}\label{subsec:H2regularity}

We now establish $\HH^2(\Omega)$-regularity for the phase-field interpolants, treating the piecewise constant and De\,Giorgi interpolants separately.
The following auxiliary result is a consequence of~\cite[Theorem 3.1]{GianazzaSavareToscani2009}.

\begin{lemma}\label{le:nabla2mEstimate}
	There exists a constant $C_3\in(0,\infty)$, depending only on $\Om$, $d$, $M$, $\frakm$ and $m(\frakm)$, such that
	\begin{equation}\label{eq:nabla2mEstimate}
		\Bignorm{\frac{\abs{\nabla v}^2}{m(v)}}_{\LL^q(\Om)}\leq C_3\bigparen{\norm{\Delta v}_{\LL^2(\Om)}+\norm{\nabla v}_{\LL^{2q}(\Om;\R^d)}^2}
	\end{equation}
	for every $v\in\HHn^2(\Om)$ and $q\in[1,2]$, with convention $\frac{0}{0}:=0$.
\end{lemma}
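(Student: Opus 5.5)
The plan is to reduce \eqref{eq:nabla2mEstimate} to the one-sided square-root estimates of~\cite[Theorem 3.1]{GianazzaSavareToscani2009}. These control $\abs{\nabla v}^2/v$ (and, when $M<+\infty$, $\abs{\nabla v}^2/(M-v)$) in $\LL^2(\Om)$ by $\norm{\Delta v}_{\LL^2(\Om)}$ on convex domains under homogeneous Neumann conditions. The degenerate factor $1/m(v)$ is split into these singular weights plus a bounded remainder by means of the growth bound~\eqref{eq:mobilityGrowthEstimates}.

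\textbf{Pointwise reduction.} By~\eqref{eq:mobilityGrowthEstimates} there is a constant $c>0$, depending only on $\frakm$, $M$ and $m(\frakm)$, such that pointwise
\[\frac{1}{m(v)}\leq c\Bigparen{\frac{1}{v}\mathbf 1_{\{v\leq\frakm\}}+\frac{1}{M-v}\mathbf 1_{\{v\geq\frakm\}}}\quad(M<+\infty),\qquad\frac{1}{m(v)}\leq c\Bigparen{\frac1v\mathbf 1_{\{v\leq\frakm\}}+\mathbf 1_{\{v\geq\frakm\}}}\quad(M=+\infty),\]
with the convention $0/0=0$ on $\{\nabla v=0\}$. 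This needs a remark: on level sets $\{v=0\}$ or $\{v=M\}$ we have $\nabla v=0$ a.e. Hence
\[\frac{\abs{\nabla v}^2}{m(v)}\leq c\Bigparen{\frac{\abs{\nabla v}^2}{v}+\frac{\abs{\nabla v}^2}{M-v}}\quad\text{or}\quad\frac{\abs{\nabla v}^2}{m(v)}\leq c\Bigparen{\frac{\abs{\nabla v}^2}{v}+\abs{\nabla v}^2},\]
where each singular term is only used on the set where it is the relevant bound.

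\textbf{Singular terms.} We use $\abs{\nabla v}^2/v=4\abs{\nabla\sqrt v}^2$ and the Lions--Villani estimate~\eqref{eq:square-root.est}, i.e.\ $\norm{\nabla\sqrt v}_{\LL^4(\Om)}^2\lesssim\norm{\Delta v}_{\LL^2(\Om)}$ for non-negative $v\in\HHn^2(\Om)$. This gives $\norm{\abs{\nabla v}^2/v}_{\LL^2(\Om)}\lesssim\norm{\Delta v}_{\LL^2(\Om)}$. Since $\Om$ is bounded, Hölder's inequality yields the same bound in $\LL^q(\Om)$ for every $q\in[1,2]$, with a constant depending on $\Leb^d(\Om)$. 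For $M<+\infty$ we apply the same estimate to $M-v\geq0$, which lies in $\HHn^2(\Om)$ and satisfies $\Delta(M-v)=-\Delta v$. The remaining non-singular term $\abs{\nabla v}^2$ (case $M=+\infty$, on $\{v\geq\frakm\}$) is bounded directly: $\norm{\abs{\nabla v}^2}_{\LL^q(\Om)}=\norm{\nabla v}_{\LL^{2q}(\Om)}^2$. Summing the pieces gives~\eqref{eq:nabla2mEstimate}, with $C_3$ depending on $\Om$, $d$, $M$, $\frakm$ and $m(\frakm)$.

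\textbf{Expected obstacle.} The main difficulty is checking that~\cite[Theorem 3.1]{GianazzaSavareToscani2009} really applies to functions that may vanish on sets of positive measure, and at the stated regularity $v\in\HHn^2(\Om)$. The theorem is typically proven for smooth positive functions. I would first apply it to $v_\delta:=v+\delta$, which is positive and still in $\HHn^2(\Om)$ with $\Delta v_\delta=\Delta v$. Then I would pass $\delta\searrow0$ using Fatou's lemma on $\abs{\nabla v}^2/(v+\delta)$, noting that the limit integrand vanishes a.e.\ on $\{v=0\}$ by the $0/0$ convention. A density argument in $\HHn^2(\Om)$ on the convex domain handles any residual smoothness requirement.
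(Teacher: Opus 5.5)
Your proposal is correct and follows essentially the same route as the paper. The paper also uses \eqref{eq:mobilityGrowthEstimates} to split $1/m(v)$ into the weights $1/v$ and $1/(M-v)$ (or a bounded remainder if $M=+\infty$), controls $|\nabla v|^2/v=4|\nabla\sqrt v|^2$ in $\LL^2(\Om)$ by $\|\Delta v\|_{\LL^2(\Om)}$ via the square-root estimate (Lisini--Matthes--Savar\'e, Lemma~A.1, and Gianazza--Savar\'e--Toscani, (3.4)), passes to $\LL^q(\Om)$ by H\"older's inequality, and bounds the remainder by $\|\nabla v\|_{\LL^{2q}(\Om)}^2$. Your $v+\delta$ approximation with monotone convergence, together with the observation that $\nabla v=0$ a.e.\ on $\{v=0\}$, is a valid extra justification of a step the paper leaves to the cited lemma.
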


\begin{proof}
	The estimates for $m$ captured in \eqref{eq:mobilityGrowthEstimates} yield
	\[\frac{m(\frakm)}{m(u)}\leq\begin{cases}
		\frac{\frakm}{u}+1&\text{for }M=+\infty,\\
		\frac{\frakm}{u}+\frac{M-\frakm}{M-u}&\text{for }M<+\infty
	\end{cases}\]
	for all $u\in\rangeM$. Then, using $\frac{\abs{\nabla v}^2}{v}=4\abs{\nabla\sqrt{v}}^2$ and inequality $16\int_\Om\abs{\nabla\sqrt{v}}^4\dd x\leq(2d+8)\int_\Om\abs{\Delta v}^2\dd x$ (cf.~\cite[Lemma A.1]{LisiniMatthesSavare2012} and \cite[Theorem 3.1, eqn. (3.4)]{GianazzaSavareToscani2009}), we obtain, in the case $M=+\infty$,
	\begin{align*}
		\Bignorm{\frac{\abs{\nabla v}^2}{m(v)}}_{\LL^q(\Om)}&\leq\frac{\frakm}{m(\frakm)}\Bignorm{\frac{\abs{\nabla v}^2}{v}}_{\LL^q(\Om)}+\frac{1}{m(\frakm)}\norm{\nabla v}_{\LL^{2q}(\Om)}^2\\
		&\leq\frac{\frakm}{m(\frakm)}\Bignorm{2\nabla\sqrt{v}}_{\LL^{2q}(\Om)}^2+\frac{1}{m(\frakm)}\norm{\nabla v}_{\LL^{2q}(\Om)}^2\\
		&\leq\frac{\frakm}{m(\frakm)}(\Leb^d(\Om))^\frac{2-q}{2q}\sqrt{2d-8}\norm{\Delta v}_{\LL^2(\Om)}+\frac{1}{m(\frakm)}\norm{\nabla v}_{\LL^{2q}(\Om)}^2.
	\end{align*}
	For $M<+\infty$, an analogous argument, applied to both $\frac{\abs{\nabla v}^2}{v}$ and $\frac{\nabla(M-v)^2}{M-v}$, shows
	\[\Bignorm{\frac{\abs{\nabla v}^2}{m(v)}}_{\LL^q(\Om)}\leq\frac{M}{m(\frakm)}(\Leb^d(\Om))^\frac{2-q}{2q}\sqrt{2d-8}\norm{\Delta v}_{\LL^2(\Om)}.\qedhere\]
\end{proof}

To take advantage of the flow interchange lemma, we next identify the dissipation of $\calE$ along the heat flow.

\begin{lemma}\label{le:freeEnergyAlongHeatFlow}
	Let $v$ be the solution to the heat equation~\eqref{eq:heatEquation} with homogeneous Neumann condition and some initial value $v^0\in\calX_\frakm^M\cap\HH^1(\Om)$. Moreover, let $z\in\LL^2(\Om)$. If
	\begin{equation}\label{eq:freeEnergyLiminfAlongHeatFlowBounded}
		\liminf_{s\searrow0}\frac{1}{s}\bigparen{\calE[v_s,z]-\calE[v^0,z]}>-\infty,
	\end{equation}
	then $v^0\in\HHn^2(\Om)$, and we have the estimate
	\begin{equation}\label{eq:freeEnergyLiminfAlongHeatFlowQualitativeEstimate}
		-\liminf_{s\searrow0}\frac{1}{s}\bigparen{\calE[v_s,z]-\calE[v^0,z]}\geq\frac{1}{2}\int_\Om\abs{\Delta v^0}^2\dd x-C_4(1+\calE[v^0,z])
	\end{equation}
	for a constant $C_4\in(0,\infty)$ depending only on $C_3$ (from Lemma~\ref{le:nabla2mEstimate}), $C_\calE$, $c_1$ and $\Amax$. If Assumption~\ref{as:entropyInequality}~\ref{it:potentialConcavePartVanishing} holds in addition, we have the sharp inequality
	\begin{multline}\label{eq:freeEnergyLiminfAlongHeatFlowRefinedEstimate}
		-\liminf_{s\searrow0}\frac{1}{s}\bigparen{\calE[v_s,z]-\calE[v^0,z]}\\
		\geq\int_\Om\abs{\Delta v^0}^2\dd x+\int_\Om F''(v^0)\abs{\nabla v^0}^2\dd x+\int_\Om(z-K(v^0))A(v^0)\Delta v^0\dd x.
	\end{multline}
\end{lemma}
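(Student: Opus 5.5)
The plan is to compute the time derivative of $s\mapsto\calE[v_s,z]$ for $s>0$, where the heat flow is smooth, and then recover information at $s=0$ from the liminf hypothesis. For $s>0$ parabolic regularity gives $v_s\in\HHn^2(\Om)$, smooth in time, with values in $\rangeM$ by the maximum principle. I will split
\[\calE[v_s,z]=\tfrac12\norm{\nabla v_s}^2_{\LL^2}+\int_\Om F(v_s)\dd x+\tfrac12\norm{z-K(v_s)}^2_{\LL^2}.\]
Differentiating and using $\pa_sv=\Delta v$ together with the Neumann condition, I expect to obtain
\[-\frac{\dd}{\dd s}\calE[v_s,z]=\int_\Om\abs{\Delta v_s}^2\dd x+\int_\Om F''(v_s)\abs{\nabla v_s}^2\dd x+\int_\Om(z-K(v_s))A(v_s)\Delta v_s\dd x.\]
The middle term comes from $-\int f(v)\Delta v=\int F''(v)\abs{\nabla v}^2$. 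This step needs justification near the degeneracy set $\{v\in\{0,M\}\}$, where $F''$ may be singular. I will handle it by approximation: replace $F$ by a truncation, or integrate over $\{\delta<v<M-\delta\}$, and pass to the limit. Assumption~\ref{as:domain} on boundary regularity provides the smoothness of $v_s$ needed to make this rigorous.

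For the quantitative bound \eqref{eq:freeEnergyLiminfAlongHeatFlowQualitativeEstimate}, I will bound the two lower-order terms from below. By \eqref{eq:potentialLowerBound}, $F''\ge -c_1(1+m)/m$, so
\[\int F''(v)\abs{\nabla v}^2\ge -c_1\int\frac{\abs{\nabla v}^2}{m(v)}-c_1\norm{\nabla v}_{\LL^2}^2.\]
Lemma~\ref{le:nabla2mEstimate} with $q=1$ gives
\[\int\frac{\abs{\nabla v}^2}{m(v)}\le C_3\bigl(\norm{\Delta v}_{\LL^2}+\norm{\nabla v}_{\LL^2}^2\bigr).\]
Young's inequality then absorbs the $\norm{\Delta v}_{\LL^2}$ term into $\tfrac14\norm{\Delta v}^2_{\LL^2}$. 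For the coupling term, $\abs{A}\le\Amax$ and Young give the bound $\tfrac14\norm{\Delta v}^2+\Amax^2\norm{z-K(v)}^2$. Both $\norm{\nabla v_s}^2$ and $\norm{z-K(v_s)}^2$ are controlled by $\calE[v_s,z]+C_\calE$ through \eqref{eq:freeEnergyLowerBound}. Hence, for $s>0$,
\[-\frac{\dd}{\dd s}\calE[v_s,z]\ge\tfrac12\norm{\Delta v_s}^2-C_4\bigl(1+\calE[v_s,z]\bigr).\]

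The main obstacle is transferring this information to $s=0$, since the liminf only involves a difference quotient. This inequality already shows that $s\mapsto\calE[v_s,z]$ is nonincreasing up to the factor $e^{C_4 s}$. Hence $\calE[v_s,z]\le e^{C_4s}(1+\calE[v^0,z])$, and $\calE[v_s,z]\to\calE[v^0,z]$ by lower semicontinuity (Lemma~\ref{le:freeEnergyProperties}). Integrating over $(0,s)$ gives
\[\frac1s\bigl(\calE[v^0,z]-\calE[v_s,z]\bigr)\ge\frac1s\int_0^s\tfrac12\norm{\Delta v_r}^2\dd r-C_4\bigl(1+\calE[v^0,z]\bigr)-o(1).\]
By the hypothesis \eqref{eq:freeEnergyLiminfAlongHeatFlowBounded}, the averages $\frac1s\int_0^s\norm{\Delta v_r}^2\dd r$ stay bounded along a sequence $s\searrow0$. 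Choose $r_k\searrow0$ with $\norm{\Delta v_{r_k}}^2$ bounded by these averages. Then $v_{r_k}\to v^0$ in $\LL^2$ and is bounded in $\HH^2$, and weak closedness of $\HHn^2$ gives $v^0\in\HHn^2$. To obtain the full estimate, I will use that $\norm{\Delta v_r}_{\LL^2}$ is nonincreasing in $r$ and converges to $\norm{\Delta v^0}_{\LL^2}$ once $v^0\in\HHn^2$, by semigroup continuity in $\HHn^2$. The averages therefore tend to $\norm{\Delta v^0}^2$, and \eqref{eq:freeEnergyLiminfAlongHeatFlowQualitativeEstimate} follows.

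For the sharp inequality \eqref{eq:freeEnergyLiminfAlongHeatFlowRefinedEstimate}, I will pass to the limit $r\to0$ in each term of the derivative identity, using strong $\HH^2$ convergence $v_r\to v^0$. The coupling term converges directly, since $A$ is continuous and bounded and $K$ is Lipschitz. For the potential term, I split $F''=\Fconv''+\Faff''+\Fconc''$. Fatou's lemma applies to the nonnegative convex part. Assumption~\ref{as:entropyInequality}~\ref{it:potentialConcavePartVanishing} makes $m\Fconc''$ bounded and continuous up to the boundary of $\rangeM$. Then $\Fconc''\abs{\nabla v}^2=(m\Fconc'')\abs{\nabla v}^2/m$ is dominated, via Lemma~\ref{le:nabla2mEstimate} with $q=2$ and $\HH^2$ convergence, and Vitali's theorem applies. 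Combining Fatou with the averaged identity yields the liminf inequality with the full terms.
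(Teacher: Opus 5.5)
Your overall plan matches the paper's: a derivative identity for $s>0$, a lower bound on the dissipation via Lemma~\ref{le:nabla2mEstimate} and Young's inequality, a transfer to $s=0$, and Fatou/Vitali for the sharp version. However, the transfer to $s=0$ relies on a step that is not justified.

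\textbf{The gap: continuity of the energy at $s=0$.} Two of your steps need $\limsup_{s\searrow0}\calE[v_s,z]\le\calE[v^0,z]$: the bound $\calE[v_s,z]\le e^{C_4s}(1+\calE[v^0,z])$, and integrating the derivative over $(0,s)$ to get the averaged inequality. You obtain this from lower semicontinuity (Lemma~\ref{le:freeEnergyProperties}), but that only gives the reverse inequality $\calE[v^0,z]\le\liminf_{s\searrow0}\calE[v_s,z]$. Your Gronwall-type monotonicity is only available on $(0,\infty)$, where the derivative formula holds, so using it to bound $\calE[v_s,z]$ by $\calE[v^0,z]$ is circular. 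The hypothesis \eqref{eq:freeEnergyLiminfAlongHeatFlowBounded} does not help either, since it only yields $\calE[v_s,z]\ge\calE[v^0,z]-Cs$.

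Nothing you use rules out an upward jump $\lim_{s\searrow0}\calE[v_s,z]>\calE[v^0,z]$. In that case the left-hand side of \eqref{eq:freeEnergyLiminfAlongHeatFlowQualitativeEstimate} equals $-\infty$ and the conclusion fails, so this is a missing ingredient, not a formality. The paper proves the continuity in a separate step:
\begin{itemize}
\item The Dirichlet and coupling terms are continuous because $v\in\CC([0,\infty);\HH^1(\Om))$ and $K$ is Lipschitz.
\item The concave and affine parts of $F$ converge by the quadratic bounds \eqref{eq:potentialDecompositionBounds}.
\item For $\Fconv$, Fatou gives the lower bound. Jensen's inequality for the Neumann heat semigroup (a probability kernel) gives the upper bound $\int_\Om\Fconv(v_t)\dd x\le\int_\Om\Fconv(v^0)\dd x$. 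This is needed because, for $M=+\infty$, the growth of $F$ is not controlled by $\HH^1$-convergence alone.
\end{itemize}

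With this step added, your route works. It is slightly heavier than the paper's, which controls the lower-order terms by the monotonicity of $\norm{v_t}_{\LL^2(\Om)}$ and $\norm{\nabla v_t}_{\LL^2(\Om)}$ along the heat flow (so no Gronwall argument is needed) and then applies the mean value theorem.

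\textbf{Three smaller repairs.}
\begin{itemize}
\item For the derivative identity at $s>0$, the strong maximum principle (Lemma~\ref{le:strongMaximumPrinciple}) keeps $v_s$ uniformly away from $\{0,M\}$. This makes your truncation argument unnecessary.
\item Lemma~\ref{le:nabla2mEstimate} with $q=2$ needs $\HH^2(\Om)\hookrightarrow\WW^{1,4}(\Om)$, which holds only for $d\le4$. Instead take $q\in(1,2]$ with $\HH^2(\Om)\hookrightarrow\WW^{1,2q}(\Om)$; this still gives the uniform integrability you need.
\item For $M=+\infty$, Assumption~\ref{as:entropyInequality}~\ref{it:potentialConcavePartVanishing} gives no control of $m\Fconc''$ at infinity. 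On $\{v>\frakm\}$ use instead that $\Fconc''$ is bounded there, which follows from \eqref{eq:potentialLowerBound}.
\end{itemize}
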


\begin{proof}
	We follow the ideas of Lisini et al.~\cite[Lemma 4.3]{LisiniMatthesSavare2012} and \cite[Lemma 4.4]{MatthesMcCannSavare2009}.
	The overall strategy is to prove first that $t\mapsto\calE[v_t,z]$ is differentiable in $(0,\infty)$ with derivative
	\begin{align}\label{eq:freeEnergyDerivativeAlongHeatFlow}
		\ddd{t}\calE[v_t,z]&=-\int_\Om\abs{\Delta v_t}^2\dd x-\int_\Om F''(v_t)\abs{\nabla v_t}^2\dd x-\int_\Om(z-K(v_t))A(v_t)\Delta v_t\dd x\notag\\
		&=:-I_1(t)-I_2(t)-I_3(t),
	\end{align}
	and, secondly, that $\calE[v,z]$ is continuous at $t=0$. Hence, Lagrange's mean value theorem is applicable and provides $\xi(s)\in(0,s)$ for every $s>0$ such that
	\begin{equation*}
		\frac{1}{s}\bigparen{\calE[v_s,z]-\calE[v^0,z]}=\ddd{t}\bigg\vert_{t=\xi(s)}\calE[v,z].
	\end{equation*}
	Inserting \eqref{eq:freeEnergyDerivativeAlongHeatFlow} and \eqref{eq:freeEnergyLiminfAlongHeatFlowBounded} and exploiting the assumptions on $F$ and $A$, one finds a sequence $(v_{\xi(s_n)})_{n\in\N}$ being uniformly bounded in $\HH^2(\Om)$. Since $v_s\to v^0$ in $\HH^1(\Om)$ already, we deduce $v_{s_n}\weakto v^0$ in $\HH^2(\Om)$ and $v^0\in\HH^2(\Om)$. We do this rigorously now.\\

	\noindent
	\textbf{Step 1:} Derivation of \eqref{eq:freeEnergyDerivativeAlongHeatFlow} and estimate~\eqref{eq:freeEnergyDerivativeAlongHeatFlowEstimate} below.\\\noindent
	Since $\Om$ has a $\CC^l$-boundary with $l\geq2$, it follows $v\in\CC^\infty((0,\infty);\HH^2(\Om))$.
	To achieve formula~\eqref{eq:freeEnergyDerivativeAlongHeatFlow}, we differentiate each term of $\calE$ individually.
	We easily obtain $I_1$ and $I_3$ by differentiating the Dirichlet part and coupling term, relying on the Lipschitz continuity of $K$.
	The integral involving $F$ can be differentiated due to the strong maximum principle Lemma~\ref{le:strongMaximumPrinciple}, which ensures $v\in\CC^0((0,\infty)\times\varcl{\Om})$ and $0<\inf_{x\in\bar\Om}v(t,x)\leq\sup_{x\in\bar\Om}v(t,x)<M$ for positive $t>0$. Hence, $v$ is uniformly separated from possible singularities of $F$. For this comparison result, we relied on the boundary regularity~\eqref{eq:domainBoundaryRegularity}.

	Next, using the lower bound~\eqref{eq:potentialLowerBound} for $F$ and Lemma~\ref{le:nabla2mEstimate}, one finds for every $\ve>0$
	\begin{align*}
		-\int_\Om F''(v_t)\abs{\nabla v_t}^2\dd x&\leq c_1\biggparen{\int_\Om\abs{\nabla v_t}^2\dd x+\int_\Om\frac{\abs{\nabla v_t}^2}{m(v_t)}\dd x}\\
		&\leq c_1\biggparen{(1+C_3)\int_\Om\abs{\nabla v_t}^2\dd x+C_3\norm{\Delta v_t}_{\LL^2(\Om)}}\\
		&\leq c_1(1+C_3)\int_\Om\abs{\nabla v_t}^2\dd x+\ve\norm{\Delta v_t}_{\LL^2(\Om)}^2+\frac{c_1^2C_3^2}{4\ve}.
	\end{align*}
	The term $I_3$ is estimated simply by~\eqref{eq:assumptionA} from Assumption~\ref{as:coefficients}
	\begin{align*}
		-\int_\Om(z-K(v_t))A(v_t)\Delta v_t\dd x&\leq\ve\int_\Om\abs{\Delta v_t}^2\dd x+\frac{\Amax^2}{4\ve}\int_\Om\abs{z-K(v_t)}^2\dd x\\
		&\leq\ve\int_\Om\abs{\Delta v_t}^2\dd x+\frac{\Amax^2}{2\ve}\bigparen{\norm{z}_{\LL^2(\Om)}^2+\Amax^2\norm{v_t}_{\LL^2(\Om)}^2}.
	\end{align*}
	Since $v$ solves \eqref{eq:heatEquation}, the $\LL^2(\Om)$-norms of $v_t$ and $\nabla v_t$ are non-increasing, see \eqref{eq:L2NormDissipationOfHeatEquation} and~\eqref{eq:DirichletEnergyDissipationOfHeatEquation}.
	In summary, there exists a constant $C_4\in(0,\infty)$, depending only on $C_3$, $c_1$ and $\Amax$, such that
	\begin{equation}\label{eq:freeEnergyDerivativeAlongHeatFlowEstimate}
		\ddd{t}\calE[v_t,z]\leq-\frac12\norm{\Delta v_t}_{\LL^2(\Om)}^2+C_4\bigparen{1+\norm{v^0}_{\HH^1(\Om)}^2+\norm{z}_{\LL^2(\Om)}^2}.
	\end{equation}

	\noindent
	\textbf{Step 2:} Continuity $\calE[v_t,z]\to\calE[v^0,z]$ as $t\searrow0$.\\\noindent
	Since $v\in\CC([0,\infty);\HH^1(\Om))$, continuity of the Dirichlet energy and of the coupling term is immediate. For the potential part of the energy, we recall $F=\Fconv+\Fconc+\Faff$. By the bounds~\eqref{eq:potentialDecompositionBounds} it is easy to verify
	\[\lim_{t\searrow0}\int_\Om\bigparen{\Fconc(v_t)+\Faff(v_t)}\dd x=\int_\Om\bigparen{\Fconc(v^0)+\Faff(v^0)}\dd x.\]
	For the convex part, Fatou's lemma and $\Fconv\geq0$ imply on the one hand
	\[\int_\Om\Fconv(v^0)\dd x\leq\liminf_{t\searrow0}\int_\Om\Fconv(v_t)\dd x.\]
	On the other hand, convexity of $\Fconv$ and the fact that $v$ solves~\eqref{eq:heatEquation} imply with Jensen's inequality
	\[\int_\Om\Fconv(v_t)\dd x\leq\int_\Om\Fconv(v^0)\dd x\quad\text{for all }t\in(0,\infty).\]
	Summarising, we have the convergence $\calE[v_t,z]\to\calE[v^0,z]$ for $t\searrow0$.\\

	\noindent
	\textbf{Step 3:} $\HHn^2(\Om)$-regularity and \eqref{eq:freeEnergyLiminfAlongHeatFlowQualitativeEstimate}.\\\noindent
	Now we can apply the mean value theorem and obtain $\xi(s)\in(0,s)$ such that
	\begin{equation}\label{eq:meanValueEstimate}
		\frac{1}{s}\bigparen{\calE[v_s,z]-\calE[v^0,z]}\leq-\frac12\norm{\Delta v_{\xi(s)}}_{\LL^2(\Om)}^2+C_4\bigparen{1+\norm{v^0}_{\HH^1(\Om)}^2+\norm{z}_{\LL^2(\Om)}^2}.
	\end{equation}
	Combining this with \eqref{eq:freeEnergyLiminfAlongHeatFlowBounded}, we get a sequence $(s_n)_{n\in\N}$ with $s_n\searrow0$ and whence $\xi(s_n)\to0$ for $n\to\infty$, such that $\sup_{n\in\N}\norm{\Delta v_{\xi(s_n)}}_{\LL^2(\Om)}<\infty$. Moreover, $\avint{_\Om}v_{\xi(s_n)}\dd x=\avint{_\Om}v^0\dd x=\frakm$ and $\nabla v_{\xi(s_n)}\cdot\n=0$ on $\boundary{\Om}$, so that $(v_{\xi(s_n)})_{n\in\N}$ is uniformly bounded in $\HH^2(\Om)$ by \eqref{eq:H2DeltaEstimate}. Together with $v_{\xi(s_n)}\to v^0$ in $\HH^1(\Om)$ this implies $v^0\in\HH^2(\Om)$, $\nabla v^0\cdot\n=0$ on $\boundary{\Om}$ and $v_{\xi(s_n)}\weakto v^0$ in $\HH^2(\Om)$.
	Estimate~\eqref{eq:freeEnergyLiminfAlongHeatFlowQualitativeEstimate} follows from \eqref{eq:meanValueEstimate}, the weak lower semicontinuity of the norm, and the lower bound \eqref{eq:freeEnergyLowerBound} for $\calE$.\\

	\noindent
	\textbf{Step 4:} Estimate \eqref{eq:freeEnergyLiminfAlongHeatFlowRefinedEstimate}.\\\noindent
	Let $s_n\searrow0$. In view of~\eqref{eq:freeEnergyDerivativeAlongHeatFlow} and the mean value theorem, it suffices to show
	\[I_1(0)+I_2(0)+I_3(0)\leq\limsup_{t\to0}\bigparen{I_1(t)+I_2(t)+I_3(t)}.\]
	Since we have already shown $v^0\in\HH^2(\Om)$, we infer a posteriori $v_t\to v^0$ in $\HH^2(\Om)$ by Proposition~\ref{pr:driftDiffusion} (applied to $V=0$, $\ve=1$) for $t\to0$.
	Thus, $I_1(t)\to I_1(0)$ and $I_3(t)\to I_3(0)$ for $t\to0$.
	To conclude, we need to prove
	\[\int_\Om F''(v^0)\abs{\nabla v^0}^2\dd x\leq\limsup_{t\to0}\int_\Om F''(v_t)\abs{\nabla v_t}^2\dd x.\]
	Fix a subsequence $(t_\ell)_{\ell\in\N}$ such that $t_\ell\to0$, $v_{t_\ell}\to v^0$ and $\nabla v_{t_\ell}\to\nabla v^0$ almost everywhere for $\ell\to\infty$, and recall the decomposition $F''(u)=\Fconv''(u)+\Fconc''(u)$.
	On $\{0<v^0<M\}$ we have $\Fconv''(v_{t_\ell})\abs{\nabla v_{t_\ell}}^2\to\Fconv''(v^0)\abs{\nabla v^0}^2$ almost everywhere by continuity $\Fconv''\in\CC((0,M))$, while, on $\{v^0\in\{0,M\}\}$, we simply use
	\[\liminf_{\ell\to\infty}\Fconv''(v_{t_\ell})\abs{\nabla v_{t_\ell}}^2\geq0=\Fconv''(v^0)\abs{\nabla v^0}^2\]
	almost everywhere. Therefore, Fatou's lemma is applicable, and since the limsup is larger than liminf, we receive
	\[\int_\Om\Fconv''(v^0)\abs{\nabla v^0}^2\dd x\leq\limsup_{\ell\to\infty}\int_\Om\Fconv''(v_{t_\ell})\abs{\nabla v_{t_\ell}}^2\dd x.\]
	For the limit passage in the concave contribution, we will use Vitali's convergence theorem to show $\Fconc''(v_{t_\ell})\abs{\nabla v_{t_\ell}}^2\to\Fconc''(v^0)\abs{\nabla v^0}^2$ in $\LL^1(\Om)$. We first show the local convergence in measure. Precisely as in the proof of the convex part, we have pointwise convergence almost everywhere $\Fconc''(v_{t_\ell})\abs{\nabla v_{t_\ell}}^2\to\Fconc''(v^0)\abs{\nabla v^0}^2$ on $\{0<v^0<M\}$. On $\{v^0\in\{0,M\}\}$, we use that for $\ve,\delta>0$
	\begin{align*}
		\Leb^d\bigparen{\{\abs{\Fconc''(v_{t_\ell})\abs{\nabla v_{t_\ell}}^2}>\ve\}}&\leq\Leb^d\bigparen{\{\abs{m(v_{t_\ell})\Fconc''(v_{t_\ell})}>\delta\}}\\
		&\quad+\Leb^d\bigparen{\bigbraces{\bigabs{\abs{\nabla v_{t_\ell}}^2/m(v_{t_\ell})}>\ve/\delta}}\\
		&\leq\Leb^d\bigparen{\{\abs{m(v_{t_\ell})\Fconc''(v_{t_\ell})}>\delta\}}+\frac{\delta}{\ve}C,
	\end{align*}
	for a constant $C\in(0,\infty)$ depending on $\sup_{\ell\in\N}\norm{v_{t_\ell}}_{\HH^2(\Om)}$, since
	\[\frac{\ve}{\delta}\Leb^d\biggparen{\biggbraces{\Bigabs{\frac{\abs{\nabla v_{t_\ell}}^2}{m(v_{t_\ell})}}>\frac{\ve}{\delta}}}\leq\int_\Om\Bigabs{\frac{\abs{\nabla v_{t_\ell}}^2}{m(v_{t_\ell})}}\dd x\leq C_3\bigparen{\norm{\Delta v_{t_\ell}}_{\LL^2(\Om)}+\norm{\nabla v_{t_\ell}}_{\LL^2(\Om;\R^d)}^2}\]
	by Lemma~\ref{le:nabla2mEstimate}. Since $m(v_{t_\ell})\Fconc''(v_{t_\ell})\to0$ almost everywhere on $\{v^0\in\{0,M\}\}$ and $\ve,\delta$ were arbitrary, we have local convergence in measure.
	Next, we show that $\bigparen{\Fconc''(v_{t_\ell})\abs{\nabla v_{t_\ell}}^2}_{\ell\in\N}$ is uniformly bounded in $\LL^q(\Om)$ space for some $q>1$, thus having uniformly absolutely continuous integrals in $\LL^1(\Om)$. To this end, fix $q\in(1,2]$ such that $\HH^2(\Om)\hookrightarrow\WW^{1,2q}(\Om)$. With condition~\eqref{eq:potentialLowerBound} we find in the case $M<+\infty$
	\begin{align*}
		\int_\Om\bigabs{\Fconc''(v_{t_\ell})\abs{\nabla v_{t_\ell}}^2}^q\dd x&=\int_\Om\Bigabs{\frac{\abs{\nabla v_{t_\ell}}^2}{m(v_{t_\ell})}m(v_{t_\ell})\Fconc''(v_{t_\ell})}^q\dd x\\
		&\leq C_3^q\bigparen{\norm{\Delta v_{t_\ell}}_{\LL^2(\Om)}+\norm{\nabla v_{t_\ell}}_{\LL^{2q}(\Om)}^2}^q\sup_{u\in[0,M]}\abs{m(u)\Fconc''(u)}^q\leq C
	\end{align*}
	for a constant $C\in(0,\infty)$ independent of $\ell$. If $M=+\infty$ we split
	\[\int_\Om\bigabs{\Fconc''(v_{t_\ell})\abs{\nabla v_{t_\ell}}^2}^q\dd x=\int_{\{v_{t_\ell}\leq\frakm\}}\bigabs{\Fconc''(v_{t_\ell})\abs{\nabla v_{t_\ell}}^2}^q\dd x+\int_{\{v_{t_\ell}>\frakm\}}\bigabs{\Fconc''(v_{t_\ell})\abs{\nabla v_{t_\ell}}^2}^q\dd x.\]
	The integral over $\{v_{t_\ell}\leq\frakm\}$ can then be controlled as in the case $M<+\infty$, and the other integral is estimated as follows
	\begin{align*}
		\int_{\{v_{t_\ell}>\frakm\}}\bigabs{\Fconc''(v_{t_\ell})\abs{\nabla v_{t_\ell}}^2}^q\dd x&\leq\norm{\nabla v_{t_\ell}}_{\LL^{2q}(\Om)}^{2q}\sup_{u\in(\frakm,\infty)}\abs{\Fconc''(u)}^q\\
		&\leq c_1\Bigparen{1+\frac{1}{m(\frakm)}}\norm{\nabla v_{t_\ell}}_{\LL^{2q}(\Om)}^{2q},
	\end{align*}
	so that a uniform bound in $\LL^q(\Om)$ is established in any case.
\end{proof}

We now combine the heat-flow estimate with the flow interchange argument to obtain $\HH^2(\Omega)$-regularity for the piecewise constant interpolants.

\begin{lemma}\label{le:aPrioriH2}
	It holds $\wtu_\kappa(t)\in\HHn^2(\Om)$ for every $\kappa>0$, $t\in(0,\infty)$ (in particular for every $u_\kappa^n$, $n\in\N$, since $\wtu_\kappa(n\kappa)=u_\kappa^n$). There exists a constant $C_5\in(0,\infty)$ such that
	\begin{equation}
		\norm{\olu_\kappa}_{\LL^2([0,T];\HH^2(\Om))}+\norm{\ulu_\kappa}_{\LL^2([\kappa,T];\HH^2(\Om))}\leq C_5(T+\kappa+1)\label{eq:aPrioriL2H2olu&ulu}
	\end{equation}
	for every $\kappa>0$ and every $T>0$.
\end{lemma}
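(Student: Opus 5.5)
We combine the flow interchange Lemma~\ref{le:flowInterchange} with $\calF=\calU$ and its $0$-flow $S_\calU^t$ given by the Neumann heat flow (cf.~\cite[Lemma 4.2]{LisiniMatthesSavare2012}) with the heat-flow estimate of Lemma~\ref{le:freeEnergyAlongHeatFlow}. The argument mirrors the proof of Lemma~\ref{le:flowInterchange}, but is set up for a general minimiser of $\Psi^w_{\theta,\boldv}$ from Lemma~\ref{le:existenceTIM}. This yields the qualitative regularity for every De\,Giorgi value at once.

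\textbf{Qualitative regularity.} Fix $n\in\N_0$ and $\theta\in(0,\kappa]$, and let $\boldu=(u,z)=\wtbfu_\kappa(n\kappa+\theta)$ be a minimiser of $\frac1{2\theta}\calD^2_{u_\kappa^n}(\cdot,\boldu_\kappa^n)+\calE$. By Lemma~\ref{le:aPrioriEasy}, $u\in\HH^1(\Om)\subset\LL^2(\Om)$, so $u\in\dom(\calU)$ by~\eqref{eq:internalEnergyUpperBound}, and the same holds for $u_\kappa^n$.

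We compare $\boldu$ with the competitor $(S_\calU^s(u),z)$. The weighted $z$-terms cancel, and dividing by $s$ gives
\[\frac{\theta}{s}\bigparen{\calE[S^s_\calU u,z]-\calE[u,z]}\geq\frac{1}{2s}\bigparen{\calW_m^2(u,u_\kappa^n)-\calW_m^2(S^s_\calU u,u_\kappa^n)}.\]
By~\eqref{eq:EVI} with $\lambda=0$, the $\liminf_{s\searrow0}$ of the right-hand side is at least $\calU(u)-\calU(u_\kappa^n)>-\infty$. Hence
\[\liminf_{s\searrow0}\tfrac1s\bigparen{\calE[S^s_\calU u,z]-\calE[u,z]}\geq\tfrac1\theta\bigparen{\calU(u)-\calU(u_\kappa^n)}>-\infty.\]
This is exactly condition~\eqref{eq:freeEnergyLiminfAlongHeatFlowBounded}. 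Lemma~\ref{le:freeEnergyAlongHeatFlow} then gives $u\in\HHn^2(\Om)$. Taking $\theta=\kappa$ covers every $u_\kappa^n$, $n\geq1$.

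\textbf{Quantitative bound.} For the piecewise constant interpolant we take $\theta=\kappa$ and combine Lemma~\ref{le:flowInterchange} with the estimate~\eqref{eq:freeEnergyLiminfAlongHeatFlowQualitativeEstimate}:
\[\calU(u_\kappa^n)-\calU(u_\kappa^{n-1})\leq-\kappa\Bigparen{\tfrac12\norm{\Delta u_\kappa^n}_{\LL^2}^2-C_4\bigparen{1+\calE(\boldu_\kappa^n)}}.\]
Since $\calE(\boldu_\kappa^n)\leq\calE(\boldu^0)$ by~\eqref{eq:TIMTelescopeEstimate}, summing over $n=1,\dots,N$ with $N=\lceil T/\kappa\rceil$ telescopes to
\[\tfrac\kappa2\sum_{n=1}^N\norm{\Delta u_\kappa^n}^2_{\LL^2}\leq\calU(u^0)+C_4(T+\kappa)\bigparen{1+\calE(\boldu^0)}.\]
Here we used $\calU\geq0$ and $N\kappa\leq T+\kappa$. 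The quantity $\calU(u^0)$ is finite by~\eqref{eq:internalEnergyUpperBound} and Lemma~\ref{le:freeEnergyProperties}.

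Elliptic regularity for the Neumann problem on mean-$\frakm$ functions (\eqref{eq:H2DeltaEstimate}), together with the $\HH^1$ bound~\eqref{eq:aPrioriLIH1u}, converts this into
\[\norm{\olu_\kappa}_{\LL^2(0,T;\HH^2)}^2\lesssim(T+\kappa)\bigparen{1+C_2^2}+\calU(u^0)+C_4(T+\kappa)(1+\calE(\boldu^0)).\]
Taking square roots, this is bounded by $C(T+\kappa+1)$, as required. Since $\ulu_\kappa(t)=\olu_\kappa(t-\kappa)$ on $[\kappa,T]$ off the grid points, the bound for $\ulu_\kappa$ on $[\kappa,T]$ follows immediately.

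\textbf{Main obstacle.} The delicate step is verifying the hypotheses of the EVI and flow interchange for general De\,Giorgi minimisers. Two points need care: $\calW_m(u,u_\kappa^n)<\infty$, which follows from the finite minimal value, and $u\in\dom(\calU)$ in the case $M=+\infty$. We also need the $\limsup$ in~\eqref{eq:EVI} to be finite, so that the bounds combine correctly with the $\liminf$. No uniform-in-$\theta$ bound is claimed for $\wtu_\kappa$ here; the factor $1/\theta$ is only used qualitatively.
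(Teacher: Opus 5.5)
Your proposal is correct and follows the paper's route. You apply flow interchange with the entropy $\calU$ and its heat flow at the De\,Giorgi step $\theta$ to verify \eqref{eq:freeEnergyLiminfAlongHeatFlowBounded} and get $\HHn^2$-regularity; then, with $\theta=\kappa$, you combine \eqref{eq:flowInterchangeEstimate} with \eqref{eq:freeEnergyLiminfAlongHeatFlowQualitativeEstimate}, telescope the entropy, and conclude via \eqref{eq:H2DeltaEstimate} and the shift identity for $\ulu_\kappa$. The only cosmetic difference is that you bound the telescoped entropy by $\calU(u^0)$ using $\calU\geq0$, whereas the paper invokes \eqref{eq:internalEnergyUpperBound} directly.
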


\begin{proof}
	Fix $n\in\N_0$, $\theta\in(0,\kappa]$, and set $t:=n\kappa+\theta$. First, we apply the flow interchange Lemma~\ref{le:flowInterchange} to the heat flow $(S_\calU^t)_{t\geq0}$ from~\eqref{eq:defInternalEnergy}. Together with the bounds for $\calU$ in~\eqref{eq:internalEnergyUpperBound} this yields
	\[\liminf_{s\searrow0}\Bigparen{\frac{\calE[S_\calU^s(\wtu_\kappa(t)),\wtz_\kappa(t)]-\calE[\wtu_\kappa(t),\wtz_\kappa(t)]}{s}}\geq\frac{1}{\theta}\bigparen{\calU(\wtu_\kappa(t))-\calU(u_\kappa^n)}\geq-\frac{C_\calU}{\theta}(1+\norm{u_\kappa^n}_{\LL^2(\Om)}^2).\]
	Hence, the condition~\eqref{eq:freeEnergyLiminfAlongHeatFlowBounded} in Lemma~\ref{le:freeEnergyAlongHeatFlow} is fulfilled. It follows $\wtu_\kappa(t)\in\HHn^2(\Om)$ and the estimate~\eqref{eq:freeEnergyLiminfAlongHeatFlowQualitativeEstimate}. Together with the flow interchange estimate~\eqref{eq:flowInterchangeEstimate} this amounts to
	\begin{align}\label{eq:aPrioriL2H2Prelim}
		\norm{\Delta u_\kappa^k}_{\LL^2(\Om)}^2&\leq2C_4(1+\calE(\boldu^0))+\frac{2}{\kappa}\bigparen{\calU(u_\kappa^{k-1})-\calU(u_\kappa^k)}
	\end{align}
	for every $k\in\N$. If $t$ is as before, we obtain by telescoping
	\begin{align*}
		\int_0^t\norm{\Delta\olu_\kappa(t)}_{\LL^2(\Om)}^2\dd t&\leq\kappa\sum_{k=1}^{n+1}\norm{\Delta u_\kappa^k}_{\LL^2(\Om)}^2\\
		&\leq2(n+1)\kappa C_4(1+\calE(\boldu^0))+2\sum_{k=1}^{n+1}\bigparen{\calU(u_\kappa^{k-1})-\calU(u_\kappa^k)}\\
		&\leq2(t+\kappa)C_4(1+\calE(\boldu^0))+2C_\calU(1+C_1^2),
	\end{align*}
	where we used the bounds for $\calU$ from~\eqref{eq:internalEnergyUpperBound} and the a priori estimate~\eqref{eq:aPrioriLIH1u}. Hence, \eqref{eq:aPrioriL2H2olu&ulu} follows for $\olu_\kappa$ from~\eqref{eq:H2DeltaEstimate}, and for $\ulu_\kappa$ since $\norm{\ulu_\kappa}_{\LL^2([\kappa,T];\HH^2(\Om))}=\norm{\olu_\kappa}_{\LL^2([0,T-\kappa];\HH^2(\Om))}$.
\end{proof}

The discrete entropy-dissipation inequality is a direct consequence of the flow interchange argument.

\begin{corollary}[Discrete entropy-dissipation inequality]
	Under Assumption~\ref{as:entropyInequality}~\ref{it:potentialConcavePartVanishing}, we have for every $\kappa>0$, $n\in\N$,
	\begin{align*}
		\calU(\boldu_\kappa^n)+\kappa\int_\Om\Bigbraces{\abs{\Delta u_\kappa^n}^2+F''(u_\kappa^n)\abs{\nabla u_\kappa^n}^2+(z_\kappa^n-K(u_\kappa^n))A(u_\kappa^n)\Delta u_\kappa^n}\dd x\leq\calU(\boldu_\kappa^{n-1}).
	\end{align*}
\end{corollary}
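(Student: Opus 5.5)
The plan is to combine the flow interchange Lemma~\ref{le:flowInterchange}, applied with $\calF=\calU$ and its $0$-flow $(S_\calU^t)_{t\geq0}$ (the Neumann heat flow), with the sharp heat-flow dissipation estimate~\eqref{eq:freeEnergyLiminfAlongHeatFlowRefinedEstimate} from Lemma~\ref{le:freeEnergyAlongHeatFlow}. This mirrors the proof of Lemma~\ref{le:aPrioriH2}, but uses the refined estimate in place of the qualitative bound~\eqref{eq:freeEnergyLiminfAlongHeatFlowQualitativeEstimate}.

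\textbf{Step 1: admissibility.} Fix $\kappa>0$ and $n\in\N$. By Lemma~\ref{le:aPrioriEasy} we have $u_\kappa^n\in\HH^1(\Om)\subset\LL^2(\Om)$, and by~\eqref{eq:internalEnergyUpperBound} this gives $u_\kappa^n\in\dom(\calU)$. Hence Lemma~\ref{le:flowInterchange} applies with $\lambda=0$ and yields
\[\calU(u_\kappa^n)-\calU(u_\kappa^{n-1})\leq\kappa\liminf_{s\searrow0}\frac{\calE[S_\calU^s(u_\kappa^n),z_\kappa^n]-\calE[u_\kappa^n,z_\kappa^n]}{s}.\]
Because $\calU(u_\kappa^{n-1})<\infty$, the right-hand side is bounded below. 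This is exactly hypothesis~\eqref{eq:freeEnergyLiminfAlongHeatFlowBounded} of Lemma~\ref{le:freeEnergyAlongHeatFlow} with $v^0=u_\kappa^n$ and $z=z_\kappa^n$. Here I use that $S_\calU^s(u_\kappa^n)$ is the solution of~\eqref{eq:heatEquation} from $u_\kappa^n$, as recalled in Subsection~\ref{ssec:flow-interchange}.

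\textbf{Step 2: refined estimate.} Under Assumption~\ref{as:entropyInequality}~\ref{it:potentialConcavePartVanishing}, Lemma~\ref{le:freeEnergyAlongHeatFlow} gives $u_\kappa^n\in\HHn^2(\Om)$ and
\[-\liminf_{s\searrow0}\frac1s\bigparen{\calE[v_s,z_\kappa^n]-\calE[u_\kappa^n,z_\kappa^n]}\geq\int_\Om\Bigbraces{\abs{\Delta u_\kappa^n}^2+F''(u_\kappa^n)\abs{\nabla u_\kappa^n}^2+(z_\kappa^n-K(u_\kappa^n))A(u_\kappa^n)\Delta u_\kappa^n}\dd x.\]
All three integrands are integrable. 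For the first and third this follows from $u_\kappa^n\in\HH^2(\Om)$ and~\eqref{eq:assumptionA}. For the middle term it follows from Lemma~\ref{le:nabla2mEstimate} together with~\eqref{eq:potentialLowerBound} and the bound on $m\Fconc''$ implicit in the proof of Lemma~\ref{le:freeEnergyAlongHeatFlow}.

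\textbf{Step 3: conclusion.} Multiplying the inequality of Step 2 by $-\kappa$ and inserting it into the flow interchange inequality of Step 1 gives
\[\calU(u_\kappa^n)-\calU(u_\kappa^{n-1})\leq-\kappa\int_\Om\{\cdots\}\dd x,\]
which, after rearranging, is the claim.

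The only real content sits in Step 2, and it has already been established in Step 4 of the proof of Lemma~\ref{le:freeEnergyAlongHeatFlow}; the delicate point there is the Vitali argument for the concave part of $F''$ near the degeneracy set. The remaining check is to make sure the right-hand side integral is finite, so that no $\infty-\infty$ issue arises when rearranging. This is guaranteed by the $\HHn^2$-regularity obtained in Step 2.
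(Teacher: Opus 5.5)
Your proposal is correct and is exactly the argument the paper intends; the paper calls the corollary a direct consequence of flow interchange. You apply Lemma~\ref{le:flowInterchange} to the $0$-flow of $\calU$ at $u_\kappa^n$, use $\calU\geq0$ to obtain \eqref{eq:freeEnergyLiminfAlongHeatFlowBounded}, and then insert the sharp estimate \eqref{eq:freeEnergyLiminfAlongHeatFlowRefinedEstimate}.

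One justification needs correcting. \eqref{eq:potentialLowerBound} and Lemma~\ref{le:nabla2mEstimate} only control the negative part of $F''(u_\kappa^n)\abs{\nabla u_\kappa^n}^2$, and $\HHn^2$-regularity alone does not make $\Fconv''(u_\kappa^n)\abs{\nabla u_\kappa^n}^2$ integrable. Finiteness of the dissipation integral follows instead from the inequality chain itself, because the $\liminf$ in your Step~1 is $>-\infty$.
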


This argument gives no uniform bound for $\wtu_\kappa$, because it does not produce a telescoping sum. Indeed, in view of~\eqref{eq:aPrioriL2H2Prelim}, we only obtain
\begin{align*}
	\int_\theta^\kappa\norm{\Delta\wtu_\kappa(r)}_{\LL^2(\Om)}^2\dd r&\leq2C_3(1+\calE(\boldu^0))\kappa+\int_\theta^\kappa\frac{2}{r}\bigparen{\calU(u_\kappa^k)-\calU(\wtu_\kappa(r))}\dd r\\
	&\leq2C_3(1+\calE(\boldu^0))\kappa+2\calU(u_\kappa^k)\bigparen{\ln\kappa-\ln\theta},
\end{align*}
which blows up for $\theta\searrow0$. Instead, we will use the metric slope estimate~\eqref{eq:slopeEstimate}.

\begin{proposition}\label{pr:aPrioriH2DeGiorgiPrelim}
	There exists a constant $C_6\in(0,\infty)$, only depending on $C_3$ (from Lemma~\ref{le:nabla2mEstimate}), $C_\calE$, $c_1$ and $\Amax$, such that
	\begin{equation}\label{eq:aPrioriH2MetricSlope}
		\frac12\norm{\Delta u}_{\LL^2(\Om)}^2\leq C_6(1+\calE(\boldu))+\abs{\pa\calE}_{\calD_w}^2(\boldu)
	\end{equation}
	for every $\boldu=(u,z)\in\bigparen{\calX_\frakm^M\cap\HHn^2(\Om)}\times\LL^2(\Om)$ and $w\in\calX_\frakm^M$.
\end{proposition}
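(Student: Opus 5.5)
We test the metric slope $\abs{\pa\calE}_{\calD_w}(\boldu)$ with the curve $s\mapsto\boldu_s:=(v_s,z)$. Here $v_s=S_\calU^s(u)$ is the Neumann heat flow started at $u$, and $z$ is kept fixed. Since $z$ does not move, $\calD_w(\boldu_s,\boldu)=\calW_m(v_s,u)$, and the weight $\tau(w)$ plays no role at all. This is why $C_6$ is independent of $w$. By definition of the slope as a $\limsup$ of $(\calE(\boldu)-\calE(\boldu_s))^+/\calD_w(\boldu_s,\boldu)$, we get
\[
\liminf_{s\searrow0}\frac{\calE(\boldu)-\calE(\boldu_s)}{s}\le \abs{\pa\calE}_{\calD_w}(\boldu)\,\limsup_{s\searrow0}\frac{\calW_m(v_s,u)}{s}.
\]
If $v_s=u$ for small $s$, then $u$ is constant and the claim is trivial.

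\textbf{Left-hand side.} We bound it from below with Lemma~\ref{le:freeEnergyAlongHeatFlow}. Since $u\in\HHn^2(\Om)$, formula~\eqref{eq:freeEnergyDerivativeAlongHeatFlow} and the mean-value argument give $-\liminf_{s\searrow0}\frac1s(\calE[v_s,z]-\calE[u,z])$ finite. More precisely, \eqref{eq:freeEnergyLiminfAlongHeatFlowQualitativeEstimate} yields the lower bound $\frac12\norm{\Delta u}_{\LL^2(\Om)}^2-C_4(1+\calE(\boldu))$.

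\textbf{Metric speed.} We bound the speed of the heat flow in $\calW_m$. The pair $(v_r,-\nabla v_r)$ lies in $\CE_\frakm(0,s;u\to v_s)$. Hence
\[
\calW_m^2(v_s,u)\le s\int_0^s\int_\Om\frac{\abs{\nabla v_r}^2}{m(v_r)}\dd x\dd r .
\]
By Lemma~\ref{le:nabla2mEstimate} with $q=1$, this gives
\[
\calW_m^2(v_s,u)\le C_3\,s\int_0^s\bigl(\norm{\Delta v_r}_{\LL^2(\Om)}+\norm{\nabla v_r}_{\LL^2(\Om)}^2\bigr)\dd r .
\]
This is exactly where the Lions--Villani estimate enters. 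Since $u\in\HH^2_\n$, the heat flow is continuous in $\HH^2$ at $r=0$, as noted in Step~4 of Lemma~\ref{le:freeEnergyAlongHeatFlow} via Proposition~\ref{pr:driftDiffusion}. Therefore
\[
\limsup_{s\searrow0}\frac{\calW_m^2(v_s,u)}{s^2}\le C_3\bigl(\norm{\Delta u}_{\LL^2(\Om)}+\norm{\nabla u}_{\LL^2(\Om)}^2\bigr)=:S^2 .
\]

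\textbf{Combination.} Putting the two bounds together,
\[
\tfrac12\norm{\Delta u}^2-C_4(1+\calE(\boldu))\le \abs{\pa\calE}_{\calD_w}(\boldu)\,S .
\]
Young's inequality gives $\abs{\pa\calE}\,S\le\abs{\pa\calE}^2+\frac14 S^2$. Then $\frac14 S^2\le\frac{C_3}{4}\norm{\Delta u}+\frac{C_3}{4}\norm{\nabla u}^2$, where the first term is at most $\frac14\norm{\Delta u}^2+\frac{C_3^2}{16}$. The gradient term is controlled via \eqref{eq:freeEnergyLowerBound} by $2C_3(\calE(\boldu)+C_\calE)$. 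After absorbing $\frac14\norm{\Delta u}^2$ on the left, this yields $\frac14\norm{\Delta u}^2\le C(1+\calE)+\abs{\pa\calE}^2$. To obtain the stated constant $\frac12$, rerun the argument using the sharper rate $\norm{\Delta v}$ from \eqref{eq:freeEnergyDerivativeAlongHeatFlow}: the slack there is $\ve$, so the left side is at least $(1-2\ve)\norm{\Delta u}^2$. With $\ve$ small, the absorption leaves $\frac12$.

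\textbf{Main obstacle.} The delicate point is the metric-speed estimate. One has to check that the heat-flow pair is an admissible competitor. This needs $v_r\in\rangeM$, and $\int_0^s\Phi<\infty$ near $r=0$, which in turn requires the $\HH^2$-continuity of the heat flow at $0$ for $u\in\HHn^2$. One also has to pass the $\limsup$ correctly through the $r$-average.
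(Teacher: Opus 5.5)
Your proposal is correct and follows essentially the same route as the paper: you vary $u$ along the Neumann heat flow with $z$ frozen, bound the speed via $\calW_m^2(u,v_s)\le s\int_0^s\int_\Om \abs{\nabla v_r}^2/m(v_r)\dd x\dd r$ together with Lemma~\ref{le:nabla2mEstimate} and $\HH^2$-continuity at $s=0$, bound the energy decay via Lemma~\ref{le:freeEnergyAlongHeatFlow}, and close with Young's inequality. Two minor points: \eqref{eq:freeEnergyLiminfAlongHeatFlowQualitativeEstimate} controls $\limsup_{s\searrow0}\frac1s(\calE(\boldu)-\calE(\boldu_s))$, so your first display should have a $\limsup$ on the left (it holds equally well that way); and the paper's symmetric Young inequality $ab\le\frac12a^2+\frac12b^2$ gives $\frac14\norm{\Delta u}_{\LL^2(\Om)}^2\le C(1+\calE(\boldu))+\frac12\abs{\pa\calE}_{\calD_w}^2(\boldu)$, hence the constant $\frac12$ directly, so your rerun with a smaller $\ve$ is valid but not needed.
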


\begin{proof}
	We let $v_s:=S_\calU^s(u)$, i.e.\ $v$ solves~\eqref{eq:heatEquation} with initial value $u$. If $u\equiv\text{constant}$ in $\Om$ then $\norm{\Delta u}_{\LL^2(\Om)}=0$ and \eqref{eq:aPrioriH2MetricSlope} is trivial; otherwise, it holds $v_s\ne u$ for every $s>0$.

	Noting that $\pa_sv=\divv(\nabla v)$ implies $\calW_m^2(u,v_s)\leq s\int_0^s\int_\Om\frac{\abs{\nabla v_t}^2}{m(v_t)}\dd x\dd t$, we derive the upper bound
	\begin{align*}
		\frac{1}{s^2}\calD_w^2(\boldu,[v_s,z])&\leq\frac{1}{s}\int_0^s\int_\Om\frac{\abs{\nabla v_t}^2}{m(v_t)}\dd x\dd t\leq\frac{1}{s}\int_0^sC_3\bigparen{\norm{\Delta v_t}_{\LL^2(\Om)}+\norm{\nabla v_t}_{\LL^2(\Om;\R^d)}^2}\dd t\\
		&\xrightarrow{s\to0} C_3\bigparen{\norm{\Delta u}_{\LL^2(\Om)}+\norm{\nabla u}_{\LL^2(\Om;\R^d)}^2}.
	\end{align*}
	Here, we used Lemma~\ref{le:nabla2mEstimate} to treat $\frac{\abs{\nabla v_t}^2}{m(v_t)}$, and the convergence $\Delta v_s\to\Delta u$ in $\LL^2(\Om)$ for $s\searrow0$, see Proposition~\ref{pr:driftDiffusion} (applied to $V\equiv0$, $\ve=1$). Moreover, with similar computations as in the proof of Lemma~\ref{le:freeEnergyAlongHeatFlow}, we show
	\[\limsup_{s\searrow0}\frac{1}{s}\bigparen{\calE(\boldu)-\calE[v_s,z]}\geq\frac12\norm{\Delta u}_{\LL^2(\Om)}^2-C_4(1+\calE(\boldu)).\]
	Hence we obtain
	\[\abs{\pa\calE}_{\calD_w}(\boldu)\geq\limsup_{s\searrow0}\frac{[\calE(\boldu)-\calE[v_s,z]]_+}{\calD_w(\boldu,[v_s,z])}\geq\frac{\bigbrackets{\frac12\norm{\Delta u}_{\LL^2(\Om)}^2-C_4(1+\calE(\boldu))}_+}{\bigparen{C_3\bigparen{\norm{\Delta u}_{\LL^2(\Om)}+\norm{\nabla u}_{\LL^2(\Om;\R^d)}^2}}^{1/2}}.\]
	Using Young's inequality
	\begin{align*}
		\frac12\norm{\Delta u}_{\LL^2(\Om)}^2&\leq C_4(1+\calE(\boldu))+\bigparen{C_3\bigparen{\norm{\Delta u}_{\LL^2(\Om)}+\norm{\nabla u}_{\LL^2(\Om;\R^d)}^2}}^{1/2}\abs{\pa\calE}_{\calD_w}(\boldu)\\
		&\leq C_4(1+\calE(\boldu))+\frac14 C_3^2+\frac14\norm{\Delta u}_{\LL^2(\Om)}^2+\frac12 C_3\norm{\nabla u}_{\LL^2(\Om;\R^d)}^2+\frac12\abs{\pa\calE}_{\calD_w}^2(\boldu),
	\end{align*}
	we infer~\eqref{eq:aPrioriH2MetricSlope}.
\end{proof}

\begin{corollary}\label{co:aPrioriH2DeGiorgi}
	There exists a constant $C_7\in(0,\infty)$ such that
	\begin{equation}\label{eq:aPrioriL2H2wtu}
		\norm{\wtu_\kappa}_{\LL^2(0,T;\HH^2(\Om))}\leq C_7(T+1)
	\end{equation}
	for every $\kappa>0$ and $T>0$.
\end{corollary}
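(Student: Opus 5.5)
We combine the coercivity estimate of Proposition~\ref{pr:aPrioriH2DeGiorgiPrelim} with the slope inequality~\eqref{eq:slopeEstimate} and the discrete energy-dissipation balance~\eqref{eq:discreteEDB}. Fix $n\in\N_0$ and $\theta\in(0,\kappa]$, and set $t=n\kappa+\theta$. By Lemma~\ref{le:aPrioriH2}, $\wtu_\kappa(t)\in\calX_\frakm^M\cap\HHn^2(\Om)$, so Proposition~\ref{pr:aPrioriH2DeGiorgiPrelim} applies with $w=u_\kappa^n$. Combined with~\eqref{eq:slopeEstimate}, it yields
\[\frac12\norm{\Delta\wtu_\kappa(t)}_{\LL^2(\Om)}^2\leq C_6\bigl(1+\calE(\wtbfu_\kappa(t))\bigr)+\frac{\calD_{u_\kappa^n}^2(\boldu_\kappa^n,\wtbfu_\kappa(t))}{\theta^2}.\]
The energy term is bounded uniformly: by~\eqref{eq:TIMminimisingProperty} and~\eqref{eq:TIMTelescopeEstimate} we have $\calE(\wtbfu_\kappa(t))\leq\calE(\boldu_\kappa^n)\leq\calE(\boldu^0)$.

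Next, integrate over $\theta\in(0,\kappa)$ and sum over $n=0,\dots,N$, where $N=\lceil T/\kappa\rceil$. By~\eqref{eq:discreteEDB},
\[\int_0^\kappa\frac{\calD^2_{u_\kappa^n}(\boldu_\kappa^n,\wtbfu_\kappa(n\kappa+\theta))}{\theta^2}\dd\theta\leq2\bigl(\calE(\boldu_\kappa^n)-\calE(\boldu_\kappa^{n+1})\bigr).\]
The right-hand side telescopes, so the sum over $n$ is at most $2(\calE(\boldu^0)-\calEmin)$. Together with the first step this gives
\[\int_0^T\norm{\Delta\wtu_\kappa}_{\LL^2(\Om)}^2\dd t\leq 2C_6(1+\calE(\boldu^0))(T+\kappa)+4(\calE(\boldu^0)-\calEmin).\]

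To pass from $\Delta\wtu_\kappa$ to the full $\HH^2$-norm, we use the elliptic estimate~\eqref{eq:H2DeltaEstimate} for Neumann functions with fixed mean $\frakm$, together with the uniform $\HH^1$ bound~\eqref{eq:aPrioriLIH1u}. This controls $\norm{\wtu_\kappa}_{\LL^2(0,T;\HH^2)}^2$ by $C(T+\kappa+1)$.

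The remaining issue is $\kappa$-uniformity in the stated form $C_7(T+1)$. For $\kappa\leq1$ this follows immediately, since $\sqrt{T+\kappa+1}\lesssim T+1$. For $\kappa>1$ the bound $(T+\kappa)$ is not uniform, because $N\kappa$ may exceed $T$ by up to $\kappa$. We avoid this by integrating $\theta$ only over $(0,\min\{\kappa,T-n\kappa\})$ in the last interval; the discrete energy-dissipation balance bound still holds there, since the integrand is nonnegative. The energy contribution then scales with $T$ itself rather than $T+\kappa$. With this adjustment the only genuinely delicate point, the singular $1/\theta^2$ weight, is fully absorbed by~\eqref{eq:discreteEDB}, and no telescoping of the entropy is needed.

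Measurability of $t\mapsto\norm{\Delta\wtu_\kappa(t)}$ follows from the Borel measurable selection discussed after~\eqref{eq:deGiorgiInterpolant}.
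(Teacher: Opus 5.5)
Your proposal is correct and follows essentially the same route as the paper: apply Proposition~\ref{pr:aPrioriH2DeGiorgiPrelim} at $\wtbfu_\kappa(t)$ with $w=u_\kappa^n$, bound the slope via \eqref{eq:slopeEstimate}, telescope the $\theta^{-2}$-weighted dissipation using \eqref{eq:discreteEDB}, and conclude with \eqref{eq:H2DeltaEstimate}. Your adjustment for the last subinterval matches the paper, which writes $T=n\kappa+\theta$, integrates the energy term only over $[0,T]$, and bounds the partial dissipation integral on the last interval by the full one.
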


\begin{proof}
	Let $T=n\kappa+\theta$ for $n\in\N_0$ and $\theta\in(0,\kappa]$. Applying Proposition~\ref{pr:aPrioriH2DeGiorgiPrelim} to the De\,Giorgi interpolant $\wtu_\kappa(t)$, and making use of the slope estimate~\eqref{eq:slopeEstimate} and the discrete energy-dissipation balance~\eqref{eq:discreteEDB}, we get
	\begin{align*}
		\frac12\int_0^T\norm{\Delta\wtu_\kappa(t)}_{\LL^2(\Om)}^2\dd t&\leq C_6\int_0^T\bigparen{1+\calE(\wtbfu_\kappa(t))}\dd t+\sum_{k=0}^n\int_0^{\kappa}\frac{\calD_{u_\kappa^k}^2(\boldu_\kappa^k,\wtbfu_\kappa(k\kappa+\vartheta))}{2\vartheta^2}\dd\vartheta\\
		&\leq C_6(1+\calE(\boldu^0))T+2\big(\calE(\boldu^0)-\calEmin\big).
	\end{align*}
	The claim follows by \eqref{eq:H2DeltaEstimate}.
\end{proof}

\subsection{Proof of Theorem \ref{th:discreteSolutions}}\label{subsec:discreteSystem}

We now complete the proof of Theorem \ref{th:discreteSolutions} by deriving the discrete evolution equations and establishing the discrete energy-dissipation inequality.
We first start with the equations \eqref{eq:approxEquationU} and \eqref{eq:approxEquationZ} for the PDE and then move on to the EDI.

\begin{lemma}[Discrete relaxation \eqref{eq:approxEquationZ}]\label{le:approxEquationZ}
	For every $\kappa>0$, every $t\in[0,\infty)\setminus\set{n\kappa}{n\in\N}$, it holds in $\LL^2(\Om)$
	\[\pa_t\whz_\kappa(t)=-\frac{1}{\tau(\ulu_\kappa(t))}\bigparen{\olz_\kappa(t)-K(\olu_\kappa(t))}.\]
\end{lemma}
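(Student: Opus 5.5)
The plan is to compute the first variation of the minimisation problem \eqref{eq:TIM} in the $z$-direction only, keeping $u=u_\kappa^{n+1}$ fixed. Fix $n\in\N_0$ and $t\in(n\kappa,(n+1)\kappa)$. Then $\ulu_\kappa(t)=u_\kappa^n$, $\olbfu_\kappa(t)=\boldu_\kappa^{n+1}$, and $\pa_t\whz_\kappa(t)=(z_\kappa^{n+1}-z_\kappa^n)/\kappa$. The claimed identity is therefore
\[\frac{z_\kappa^{n+1}-z_\kappa^n}{\kappa}=-\frac{1}{\tau(u_\kappa^n)}\bigparen{z_\kappa^{n+1}-K(u_\kappa^{n+1})},\]
which is exactly the Euler--Lagrange equation in $z$ for the $n$-th step of the scheme. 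At $t=0$ the interpolants $\ulu_\kappa$ and $\olbfu_\kappa$ take the values from the first interval, so $t=0$ fits the same formula with $n=0$. Since $\whz_\kappa$ is affine, the one-sided derivative exists there.

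For the variation, take any $\eta\in\LL^2(\Om)$ and $h\in\R$, and compare $\boldu_\kappa^{n+1}$ with the competitor $(u_\kappa^{n+1},z_\kappa^{n+1}+h\eta)$. The competitor is admissible because its $u$-component is unchanged and lies in $\calX_\frakm^M\cap\HH^1(\Om)$. The $\calW_m$-term is identical for both, so it cancels. What remains of $\Psi_{\kappa,\boldu_\kappa^n}$ is the function
\[g(h)=\frac{1}{2\kappa}\int_\Om\tau(u_\kappa^n)\bigabs{z_\kappa^{n+1}+h\eta-z_\kappa^n}^2\dd x+\frac12\int_\Om\bigparen{z_\kappa^{n+1}+h\eta-K(u_\kappa^{n+1})}^2\dd x,\]
which is a quadratic polynomial in $h$. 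It is finite because $\tau\le\taumax$, $z_\kappa^n,z_\kappa^{n+1}\in\LL^2(\Om)$, and $K(u_\kappa^{n+1})\in\LL^2(\Om)$ by the Lipschitz continuity of $K$.

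Minimality of $\boldu_\kappa^{n+1}$ gives $g'(0)=0$, that is,
\[\int_\Om\Bigparen{\tau(u_\kappa^n)\frac{z_\kappa^{n+1}-z_\kappa^n}{\kappa}+z_\kappa^{n+1}-K(u_\kappa^{n+1})}\eta\dd x=0\qquad\text{for all }\eta\in\LL^2(\Om).\]
Hence the bracket vanishes almost everywhere. Dividing by $\tau(u_\kappa^n)\ge\taumin>0$, which Assumption~\ref{as:coefficients} guarantees, yields the identity pointwise a.e. and thus in $\LL^2(\Om)$.

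I do not expect a real obstacle here. The one point needing care is that the $u$-dependence must not interfere, and it does not: the metric $\calD_{u_\kappa^n}$ is a sum of the $u$-part and the $z$-part, and $\calE$ depends on $z$ only through the smooth quadratic coupling term. Beyond that, the bounds $\taumin\le\tau\le\taumax$ make all the integrals finite and justify the division.
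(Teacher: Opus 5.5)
Your argument matches the paper's proof. You vary only the $z$-component of the minimiser, note that the $\calW_m$-term and the $u$-dependent parts of $\calE$ cancel, and read off the Euler--Lagrange equation $\tau(u_\kappa^n)\frac{z_\kappa^{n+1}-z_\kappa^n}{\kappa}+z_\kappa^{n+1}-K(u_\kappa^{n+1})=0$ a.e., which is the claim on each open interval $(n\kappa,(n+1)\kappa)$.

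One correction concerns your remark on $t=0$, which is wrong. By \eqref{eq:piecewiseInterpolants}, $\olbfu_\kappa(0)=\boldu_\kappa^{\lceil 0\rceil}=\boldu^0$; the paper states explicitly that $\olz_\kappa(n\kappa)=z_\kappa^n$ for $n\in\N_0$. At $t=0$ the identity would therefore read $\frac{z_\kappa^1-z^0}{\kappa}=-\frac{1}{\tau(u^0)}\bigl(z^0-K(u^0)\bigr)$. This fails in general: for $A\equiv0$ the scheme gives $z_\kappa^1=\frac{\tau(u^0)}{\tau(u^0)+\kappa}z^0$. So $t=0$ must be excluded like the other grid points, not justified. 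This is harmless, since only the a.e.-in-time identity is used later, and the paper's proof does not address this point either.
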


\begin{proof}
	We just compute the Euler--Lagrange equations of~\eqref{eq:TIM} with respect to $z$. Fix $\kappa>0$, $n\in\N_0$, and let $p\in\LL^2(\Om)$ be arbitrary. For $h\in(0,\infty)$ we compute
	\begin{align*}
		&\frac{\Psi_{\kappa,\boldu_\kappa^n}[u_\kappa^{n+1},z_\kappa^{n+1}+hp]-\Psi_{\kappa,\boldu_\kappa^n}[u_\kappa^{n+1},z_\kappa^{n+1}]}{h}\\
		&\qquad=\frac{1}{\kappa}\bigparen{\tau(u_\kappa^n)(z_\kappa^{n+1}-z_\kappa^n),p}_{\LL^2(\Om)}+\int_\Om\bigparen{z_\kappa^{n+1}-K(u_\kappa^{n+1})}p\dd x+O(h).
	\end{align*}
	Thus, $h\mapsto\Psi_{\kappa,\boldu_\kappa^n}[u_\kappa^{n+1},z_\kappa^{n+1}+hp]$ is differentiable at $h=0$, and since $\boldu_\kappa^{n+1}$ is a minimiser,
	\[0=\ddd{h}\bigg\vert_{h=0}\Psi_{\kappa,\boldu_\kappa^n}[u_\kappa^{n+1},z_\kappa^{n+1}+hp]=\int_\Om\Bigparen{\tau(u_\kappa^n)\frac{z_\kappa^{n+1}-z_\kappa^n}{\kappa}+\bigparen{z_\kappa^{n+1}-K(u_\kappa^{n+1})}}p\dd x.\]
	Since $p\in\LL^2(\Om)$ was arbitrary, the statement follows directly from the definition of the interpolants $\whz_\kappa$, $\olz_\kappa$, $\olu_\kappa$, $\ulu_\kappa$ in~\eqref{eq:piecewiseInterpolants}.
\end{proof}

As already outlined in Section~\ref{sec:setupAndStrategy}, the derivation of~\eqref{eq:approxEquationU} relies on the flow interchange technique and the regularised potential energy~\eqref{eq:defPotentialEnergyReg}. To this end, we need to estimate the dissipation
\[\liminf_{s\searrow0}\frac{1}{s}\bigparen{\calE[v_s,z]-\calE[v^0,z]},\qquad v_s=S_{\calV_\ve}^s(v^0)\]
suitably. In fact, we establish this for a more general evolution as we will need it later for the definitions of $\olj_\kappa$ and $\wtj_\kappa$. To this end, recall the definition of $\boldN$ given in~\eqref{eq:defBoldN}.

\begin{lemma}\label{le:freeEnergyAlongPotentialFlow}
	Let $\boldzeta\in\CC^1(\varcl{\Om};\R^d)$ with $\boldzeta\cdot\n=0$ on $\boundary{\Om}$, $(v^0,z)\in(\calX_\frakm^M\cap\HHn^2(\Om))\times\LL^2(\Om)$, and $\ve>0$. Moreover, assume $s\mapsto v_s$ solves the nonlinear drift-diffusion equation~\eqref{eq:viscousPotentialEquation} with $v_0=v^0$, and fix $p\in\LL^2(\Om)$. Then
	\begin{multline}\label{eq:freeEnergyLiminfAlongPotentialFlowQualitativeEstimate}
		-\liminf_{s\searrow0}\frac{1}{s}\bigparen{\calE[v_s,z+sp]-\calE[v^0,z]}\\
		\geq-\boldN[(v^0,z),\boldzeta]-(p,z-K(v^0))_{\LL^2(\Om)}-\ve C_8(1+\calE[v^0,z])
	\end{multline}
	for some constant $C_8\in(0,\infty)$ depending only on $C_3$ (from Lemma~\ref{le:nabla2mEstimate}), $C_\calE$, $c_1$ and $\Amax$.
\end{lemma}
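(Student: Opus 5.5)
The plan mirrors the proof of Lemma~\ref{le:freeEnergyAlongHeatFlow}. First, differentiate $s\mapsto\calE[v_s,z+sp]$ for $s>0$. Then prove continuity at $s=0$. Finally, use the mean value theorem and pass $s\searrow0$, where the extra regularity of $v^0\in\HHn^2(\Om)$ gives strong $\HH^2$-continuity of the flow. By Proposition~\ref{pr:driftDiffusion} (applied with $V$ replaced by the general drift $\boldzeta$, $\ve>0$), the solution satisfies $v\in\CC([0,\infty);\HHn^2(\Om))$ with $v_s\to v^0$ in $\HH^2(\Om)$, and $v$ stays in $\rangeM$. Moreover, by the strong maximum principle (Lemma~\ref{le:strongMaximumPrinciple} or its drift analogue), $v_s$ is separated from $0$ and $M$ for $s>0$, so $F(v_s)$ can be differentiated.

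\textbf{Step 1: derivative formula.} Using $\pa_sv=\divv(m(v)\boldzeta)+\ve\Delta v$ and integrating by parts with the Neumann and no-flux conditions, I expect for $s>0$
\begin{align*}
\ddd{s}\calE[v_s,z+sp]&=\int_\Om\bigparen{-\Delta v_s+f(v_s)-A(v_s)(z+sp-K(v_s))}\bigparen{\divv(m(v_s)\boldzeta)+\ve\Delta v_s}\dd x\\
&\quad+\int_\Om(z+sp-K(v_s))p\dd x.
\end{align*}
The $f$-term with the drift part is rewritten as $\int_\Om f(v)\divv(m(v)\boldzeta)\dd x=-\int_\Om m(v)F''(v)\nabla v\cdot\boldzeta\dd x=-\int_\Om\nabla P(v)\cdot\boldzeta\dd x=\int_\Om P(v)\divv\boldzeta\dd x$. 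This uses $\boldzeta\cdot\n=0$ and avoids any singularity of $f$. The drift contributions therefore equal exactly $\boldN[(v_s,z+sp),\boldzeta]$.

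The $\ve$-contribution is $-\ve\bigparen{I_1+I_2+I_3}$ with the same $I_i$ as in~\eqref{eq:freeEnergyDerivativeAlongHeatFlow}. The estimates of Step~1 in Lemma~\ref{le:freeEnergyAlongHeatFlow} (via \eqref{eq:potentialLowerBound}, Lemma~\ref{le:nabla2mEstimate} and \eqref{eq:assumptionA}) give
\[
-\ve(I_1+I_2+I_3)\le -\tfrac{\ve}{2}\norm{\Delta v_s}_{\LL^2}^2+\ve C\bigparen{1+\norm{v_s}_{\HH^1}^2+\norm{z+sp}_{\LL^2}^2}.
\]
Dropping the negative term leaves an $\ve C_8(1+\calE[v^0,z])+o(1)$ bound via \eqref{eq:freeEnergyLowerBound}. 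Here the $\HH^1$-norm of $v_s$ is controlled uniformly for small $s$ by continuity.

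\textbf{Step 2: continuity at $s=0$ and limit passage.} Continuity of $s\mapsto\calE[v_s,z+sp]$ at $0$ follows from $v_s\to v^0$ in $\HH^1$, hence uniformly when $d\le3$ and in any case in $\LL^\omega$, together with the decomposition $F=\Fconv+\Fconc+\Faff$ as in Step~2 of Lemma~\ref{le:freeEnergyAlongHeatFlow}. For the convex part, Jensen is no longer available because of the drift. I would instead use that $F(v_s)$ is continuous in $s$ on $[0,s_0]$ when $v^0$ takes values where $F$ is finite. Alternatively, one can integrate the derivative formula, which is integrable near $0$ by Step~1, and obtain $\calE[v_s,z+sp]-\calE[v^0,z]=\int_0^s\frac{\dd}{\dd r}\dots\dd r$ after approximating.

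By the mean value theorem it then suffices to show $\boldN[(v_r,z+rp),\boldzeta]\to\boldN[(v^0,z),\boldzeta]$ and $(z+rp-K(v_r),p)\to(z-K(v^0),p)$ as $r\searrow0$. The second limit is immediate from Lipschitz continuity of $K$. For the first, $\Delta v_r\to\Delta v^0$ in $\LL^2$, and $\divv(m(v_r)\boldzeta)=m'(v_r)\nabla v_r\cdot\boldzeta+m(v_r)\divv\boldzeta$ converges in $\LL^2$ because $m'$ is bounded, $\nabla v_r\to\nabla v^0$ in $\LL^2$, and a.e.\ convergence holds along subsequences (dominated convergence). Also $P(v_r)\to P(v^0)$ in $\LL^1$ by continuity of $P$ and the growth bound \eqref{eq:asymptoticBehaviorP} combined with the embedding~\eqref{eq:interpolationExponentq}.

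\textbf{Main obstacle.} I expect the main difficulty to be justifying the derivative at and near $s=0$ for the $F$-part when $F''$ is singular at $0$ or $M$ and $v^0$ touches these values. The $P$-rewriting handles the drift term. For the $\ve I_2$ term, only the lower bound from~\eqref{eq:potentialLowerBound} is needed, and this is what the sign structure of the liminf requires.
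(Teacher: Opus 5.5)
There is a genuine gap in your Step~1. You justify differentiating $s\mapsto\int_\Om F(v_s)\dd x$ by claiming that, for $s>0$, $v_s$ is uniformly separated from $0$ and $M$ ``by Lemma~\ref{le:strongMaximumPrinciple} or its drift analogue''. Lemma~\ref{le:strongMaximumPrinciple}, however, concerns only the linear heat equation, and its proof rests on $v\in\CC^\infty((0,\infty);\HH^l(\Om))$. For the nonlinear equation~\eqref{eq:viscousPotentialEquation}, $m$ is only $\CC^2$ in the interior and $m'$ is merely bounded. Proposition~\ref{pr:driftDiffusion} then gives only $v\in\CC([0,\infty);\HH^2(\Om))$, which for $d\ge4$ does not even yield continuity. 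No drift analogue is available; the paper states explicitly that the strong maximum principle does not apply here.

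Without separation, and for admissible singular potentials (Flory--Huggins, $F(u)=-\beta u\log u$), $f(v_s)$ is not known to lie in $\LL^2(\Om)$ for $s>0$. So none of the following is justified: the chain rule $\frac{\dd}{\dd s}\int_\Om F(v_s)\dd x=\int_\Om f(v_s)\pa_sv_s\dd x$, the identity $\int_\Om f(v)\Delta v\dd x=-\int_\Om F''(v)\abs{\nabla v}^2\dd x$, or $\int_\Om f(v)\divv(m(v)\boldzeta)\dd x=\int_\Om P(v)\divv\boldzeta\dd x$. The problem is therefore not confined to ``at and near $s=0$'', as your last paragraph suggests. Your fallback ``after approximating'' names the missing step without providing it.

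The missing idea is a regularisation of the potential with $\delta$-uniform control. The paper replaces $F$ by $F_\delta:=F\circ\alpha_\delta$, where $\alpha_\delta(u)=\delta+\frac{M-2\delta}{M}u$, so $F_\delta$ is $\CC^2$ on all of $\rangeM$. It then proves the uniform semiconvexity bound $m(u)F_\delta''(u)\ge-c_1(2+m(u))$ for small $\delta$, using the Lipschitz bound on $m$ and its monotonicity structure. It also proves a $\delta$-uniform version of the decay~\eqref{eq:asymptoticBehaviorP} for $P_\delta$. For the regularised energy $\calE_\delta$, you can then write $\calE_\delta[v_s,z+sp]-\calE_\delta[v^0,z]$ as a time integral over $[0,s]$ of $\boldN_\delta[(v_t,z+tp),\boldzeta]+(p,z+tp-K(v_t))_{\LL^2(\Om)}$ plus the $\ve$-term. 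That $\ve$-term is estimated exactly as you do, with constants independent of $\delta$. Next pass $\delta\searrow0$ at fixed $s$: $F_\delta(v_s)\to F(v_s)$ in $\LL^1$ by~\eqref{eq:potentialUpperBound}, and $P_\delta(v_s)\to P(v_s)$ in $\LL^1$ by Vitali. Only afterwards let $s\searrow0$, using $v\in\CC([0,\infty);\HH^2(\Om))$. This route also makes your separate continuity-at-zero argument and the mean value theorem unnecessary. A minor slip: $\HH^1$-convergence does not give uniform convergence for $d\le3$; you need the $\HH^2$-convergence there.
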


\begin{proof}
	Similarly to Lemma~\ref{le:freeEnergyAlongHeatFlow}, we want to use absolute continuity of $t\mapsto\calE[v_t,z]$. To be able to differentiate, we need to avoid evaluations of $F$ at its singularities. Since the strong maximum principle does not apply to~\eqref{eq:viscousPotentialEquation}, we must regularise $F$.\\

	\noindent
	\textbf{Step 1:} Regularisation of $F$.\\\noindent
	Introducing $\alpha_\delta(u):=\delta+\frac{M-2\delta}{M}u$ for $0<\delta<M/2$, with convention $\frac{+\infty-2\delta}{+\infty}:=1$, we put $F_\delta(u):=F(\alpha_\delta(u))$. Observe that $F_\delta\in\CC^2(\rangeM)$ and $F_\delta(u)\to F(u)$ for every $u\in\rangeM$. Also note that the regularised potential satisfies
	\begin{equation}\label{eq:potentialLowerBoundReg}
		m(u)F_\delta''(u)\geq-c_1(2+m(u))
	\end{equation}
	for all small $\delta>0$. This is of course trivial if $F_\delta''(u)\geq0$, and for $u$ with $F_\delta''(u)<0$ we have
	\begin{align}\label{eq:potentialLowerBoundRegPrelim}
		m(u)F_\delta''(u)&=m(\alpha_\delta(u))F_\delta''(u)+\bigparen{m(u)-m(\alpha_\delta(u)}F_\delta''(u)\notag\\
		&\geq\bigparen{\tfrac{M-2\delta}{M}}^2\Bigparen{-c_1\bigparen{1+m(\alpha_\delta(u))}+\bigparen{m(u)-m(\alpha_\delta(u))}F''(\alpha_\delta(u))}\notag\\
		&\geq-c_1\bigparen{\tfrac{M-2\delta}{M}}^2\bigparen{1+\norm{m'}_\infty\delta\bigabs{1-\tfrac{2u}{M}}+m(u)}\notag\\
		&\qquad+\bigparen{\tfrac{M-2\delta}{M}}^2\bigparen{m(u)-m(\alpha_\delta(u))}F''(\alpha_\delta(u)),
	\end{align}
	with convention $\frac{2u}{+\infty}:=0$, where we used \eqref{eq:potentialLowerBound} and Lipschitz continuity of $m$~\eqref{eq:mobilityLipschitzConditions}. If $M=+\infty$ then $m$ is even non-decreasing, and hence \eqref{eq:potentialLowerBoundRegPrelim} implies $m(u)F_\delta''(u)\geq-c_1(1+\norm{m'}_\infty\delta+m(u))$. If $M<+\infty$, then we find $s_0\in(0,M)$ such that $\norm{m}_\infty=m(s_0)$, and $m\vert_{[0,s_0]}$ is monotonically increasing, $m\vert_{[s_0,M]}$ is monotonically decreasing. Thus, \eqref{eq:potentialLowerBoundRegPrelim} can be estimated by
	\[m(u)F_\delta''(u)\geq-c_1\bigparen{1+\norm{m'}_\infty\delta+m(u)}-\norm{m'}_\infty\delta\norm{F''}_{\CC([s_0\wedge\frac{M}{2},s_0\vee\frac{M}{2}])}.\]
	Furthermore, with $P_\delta(u)=\int_\frakm^um(r)F_\delta''(r)\dd r$, we have in the case $M=+\infty$
	\begin{equation}\label{eq:asymptoticBehaviorPReg}
		\lim_{u\to\infty}\sup_{\delta\in[0,1]}\frac{P_\delta(u)}{u^\omega+\abs{F_\delta(u)}}=0.
	\end{equation}
	Indeed, this can be deduced from~\eqref{eq:asymptoticBehaviorP} and~\eqref{eq:pressureUpperBoundReg} below, which follows from Lipschitz continuity of $m$ from~\eqref{eq:mobilityLipschitzConditions} and $\abs{F''(r)}\leq F''(r)+2c_1(1+\frac{1}{m(r)})$ from~\eqref{eq:potentialLowerBound},
	\begin{align}\label{eq:pressureUpperBoundReg}
		P_\delta(u)&=\int_\frakm^u\bigparen{m(r)-m(r+\delta)+m(r+\delta)}F''(r+\delta)\dd r\notag\\
		&\leq\int_\frakm^{u+\delta}\bigparen{\norm{m'}_\infty\delta+m(r)}\abs{F''(r)}\dd r\notag\\
		&\leq\Bigparen{\frac{\norm{m'}_\infty\delta}{m(\frakm)}+1}\int_\frakm^{u+\delta}m(r)\Bigparen{F''(r)+2c_1\Bigparen{1+\frac{1}{m(\frakm)}}}\dd r\notag\\
		&\leq\Bigparen{\frac{\norm{m'}_\infty\delta}{m(\frakm)}+1}\Bigparen{P(u+\delta)+2c_1\Bigparen{1+\frac{1}{m(\frakm)}}\norm{m'}_\infty(u+\delta)^2}.
	\end{align}

	\noindent
	\textbf{Step 2:} Preliminary version of \eqref{eq:freeEnergyLiminfAlongPotentialFlowQualitativeEstimate}.\\\noindent
	Now let $\calE_\delta$ denote the energy obtained from $\calE$ by replacing $F$ with $F_\delta$. Abbreviate $z_s:=z+sp$. Exploiting the regularity of $v$ from Proposition~\ref{pr:driftDiffusion}, we get for every $s\in(0,\infty)$
	\begin{align*}
		\calE_\delta[v_s,z_s]-\calE_\delta[v^0,z]&\leq\ve\int_0^s\int_\Om\Bigparen{-\abs{\Delta v_t}^2+F_\delta'(v_t)\Delta v_t-A(v_t)(z_t-K(v_t))\Delta v_t}\dd x\dd t\\
		&\quad+\int_0^s\int_\Om\divv(m(v_t)\boldzeta)\bigparen{-\Delta v_t+F_\delta'(v_t)}\dd x\dd t\\
		&\quad-\int_0^s\int_\Om\divv(m(v_t)\boldzeta)A(v_t)(z_t-K(v_t))-p(z_t-K(v_t))\dd x\dd t\\
		&=\ve I(s)+\int_0^s\bigparen{\boldN_\delta[(v_t,z_t),\boldzeta]+(p,z_t-K(v_t))_{\LL^2(\Om)}}\dd t,
	\end{align*}
	where $\boldN_\delta$ is obtained from $\boldN$ by replacing $P$ with $P_\delta$.
	To obtain the last line, we integrated by parts twice with the boundary terms vanishing due to $\nabla v_t\cdot\n=\boldzeta\cdot\n=0$ on $\boundary{\Om}$,
	\[\int_\Om\divv(m(v_t)\boldzeta)F_\delta'(v_t)\dd x=-\int_\Om m(v_t)\boldzeta\cdot F_\delta''(v_t)\nabla v_t\dd x=\int_\Om P_\delta(v_t)\divv\boldzeta\dd x.\]
	Taking into account \eqref{eq:potentialLowerBoundReg}, we can treat the error integral $I(t)$ in the same way as in Step 1 of the proof of Lemma~\ref{le:freeEnergyAlongHeatFlow}. The only difference is that we cannot use that $s\mapsto\norm{v_s}_{\LL^2(\Om)}^2,\norm{\nabla v_s}_{\LL^2(\Om;\R^d)}^2$ are non-increasing. Therefore, we end up with
	\begin{align*}
		\frac{\calE_\delta[v_s,z_s]-\calE_\delta[v^0,z]}{s}\leq\frac{1}{s}\int_0^s\Bigparen{&\boldN_\delta[(v_t,z_t),\boldzeta]+(p,z_t-K(v_t))_{\LL^2(\Om)}\\
			&+\ve C_{8,1}\bigparen{\norm{\nabla v_t}_{\LL^2(\Om;\R^d)}^2+\norm{z_t-K(v_t)}_{\LL^2(\Om)}^2+1}}\dd t
	\end{align*}
	for a constant $C_{8,1}\in(0,\infty)$ depending only on $C_3$, $c_1$ and $\Amax$.\\

	\noindent
	\textbf{Step 3:} Limit passages $\delta\searrow0$, $s\searrow0$.\\\noindent
	First, we pass to the limit $\delta\searrow0$ for $s>0$ fixed. The pointwise convergence $F_\delta(v_s)\to F(v_s)$ is upgraded to $\LL^1(\Om)$-convergence thanks to $v_s\in\HH^2(\Om)$ and the growth condition~\eqref{eq:potentialUpperBound}. Moreover, we have $P_\delta(v_s)\to P(v_s)$ in $\LL^1(\Om)$ due to Vitali convergence theorem. Indeed, pointwise convergence is clear, so it remains to show uniform integrability. By~\eqref{eq:asymptoticBehaviorPReg} and \eqref{eq:potentialUpperBound}, the family $(P_\delta(v_s))_{\delta\in[0,1]}$ is uniformly bounded in $\LL^1(\Omega)$. Moreover, we find for every $\ve>0$ a constant $C_\ve\in(0,\infty)$ such that $\abs{P_\delta(u)}\leq\ve(u^\omega+\abs{F_\delta(u)})$ for every $u\in[C_\ve,+\infty)$, $\delta\in[0,1]$. Hence, if $K\subset\Om$ fulfils $\Leb^d(K)\leq\frac{\ve}{\norm{P}_{\CC([0,C_\ve+1])}}$, then
	\begin{align*}
		\int_K\abs{P_\delta(v_s)}\dd x&=\int_{\{v_s<C_\ve\}\cap K}\abs{P_\delta(v_s)}\dd x+\int_{\{v_s\geq C_\ve\}\cap K}\abs{P_\delta(v_s)}\dd x\\
		&\leq\ve+\ve\bigparen{\norm{v_s}_{\LL^\omega(\Om)}^\omega+\norm{F_\delta(v_s)}_{\LL^1(\Om)}}.
	\end{align*}
	Thus, \eqref{eq:potentialUpperBound} guarantees that $(P_\delta(v_s))_{\delta\in[0,1]}$ has uniformly absolutely continuous integrals. Therefore, the passage $\delta\searrow0$ is justified.

	In order to perform the limit $s\searrow0$, we use the continuity properties of $v$ from Proposition~\ref{pr:driftDiffusion} and the energy bound \eqref{eq:freeEnergyLowerBound} so that we conclude with $C_8:=8C_{8,1}(C_\calE+1)$.
\end{proof}

We now apply this dissipation estimate to the regularised potential flow to derive the approximate weak equation for $\olbfu_\kappa$.

\begin{lemma}[Approximate equation for $u_\kappa$]
	Let $V\in\CC^2(\varcl{\Om})\cap\HHn^2(\Om)$ be a spatial test function and $\psi\in\CC_c^1(0,\infty)$ a temporal test function be given. Fix $\kappa>0$ and $\ve>0$. Then
	\begin{equation}\label{eq:approxEquationUprelim}
		\biggabs{\int_0^\infty\psi'(t)\calV(\olu_\kappa(t))\dd t+\int_0^\infty\underline{\psi}_\kappa(t)\boldN[\olbfu_\kappa(t),\nabla V]\dd t}\leq C_9\Bigparen{\frac{\kappa}{\ve}+\ve+\kappa},
	\end{equation}
	where $C_9\in(0,\infty)$ is a constant only depending on $C_3$ (from Lemma~\ref{le:nabla2mEstimate}), $C_\calU$, $C_\calE$, $\calEmin$, $\calE(\boldu^0)$, $c_1$, $\Amax$, $\psi$, and $V$. Here, $\underline{\psi}_\kappa$ is the piecewise constant function defined as $\underline{\psi}_\kappa(t)=\psi\bigparen{\kappa\lfloor\frac{t}{\kappa}\rfloor}$. In particular, \eqref{eq:approxEquationU} follows from \eqref{eq:approxEquationUprelim} with the choice $\ve:=\sqrt{\kappa}$.
\end{lemma}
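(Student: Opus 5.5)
We follow the flow interchange strategy of \cite{LisiniMatthesSavare2012}. Fix $n\in\N$ with $u_\kappa^n\in\HHn^2(\Om)$ (guaranteed by Lemma~\ref{le:aPrioriH2}) and apply Lemma~\ref{le:flowInterchange} to the $\lambda_\ve$-flow of $\calV_\ve=\calV+\ve\calU$. Along this flow, $v_s$ solves \eqref{eq:viscousPotentialEquation} with drift $\boldzeta=\nabla V$, which is admissible since $\nabla V\cdot\n=0$. Lemma~\ref{le:freeEnergyAlongPotentialFlow} with $p=0$ bounds the liminf of the energy difference quotient by $\boldN[\boldu_\kappa^n,\nabla V]+\ve C_8(1+\calE(\boldu^0))$. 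This yields
\[
\calV(u_\kappa^n)-\calV(u_\kappa^{n-1})+\ve\bigparen{\calU(u_\kappa^n)-\calU(u_\kappa^{n-1})}\leq\kappa\,\boldN[\boldu_\kappa^n,\nabla V]+\kappa\ve C_8(1+\calE(\boldu^0))-\frac{\lambda_\ve}{2}\calW_m^2(u_\kappa^n,u_\kappa^{n-1}).
\]
Here $-\lambda_\ve\lesssim 1+1/\ve$ by the formula for $\lambda_\ve$. Replacing $V$ by $-V$, which flips the sign of both $\calV$ and $\boldN[\cdot,\nabla V]$ because $\boldN$ is linear in $\boldzeta$ and $P(u)\divv\boldzeta$ is linear as well, gives the reverse inequality with the same error terms, except that $\ve(\calU(u_\kappa^n)-\calU(u_\kappa^{n-1}))$ keeps its sign.

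Next we multiply by $\psi(\kappa(n-1))$, which equals $\underline\psi_\kappa$ on $((n-1)\kappa,n\kappa]$, and sum over $n$. On the left, discrete summation by parts turns $\sum_n\psi((n-1)\kappa)(\calV(u_\kappa^n)-\calV(u_\kappa^{n-1}))$ into $-\int_0^\infty\psi'\calV(\olu_\kappa)\dd t$ up to an $O(\kappa)$ error. That error is controlled by $\norm{\psi''}_\infty$ and $\abs{\calV(u)}\leq\norm V_\infty\frakm\Leb^d(\Om)$; it comes from comparing $\psi(n\kappa)-\psi((n-1)\kappa)$ with the integral of $\psi'$ over a shifted interval, and compact support removes boundary terms. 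On the right, $\kappa\sum_n\psi((n-1)\kappa)\boldN[\boldu_\kappa^n,\nabla V]=\int\underline\psi_\kappa\boldN[\olbfu_\kappa,\nabla V]\dd t$ exactly.

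The error terms are bounded as follows. The $\ve C_8$-term sums to $O(\ve)$ over the compact support of $\psi$. The $\calW_m^2$-term is controlled by $\norm\psi_\infty(1+1/\ve)\sum_n\calW_m^2(u_\kappa^n,u_\kappa^{n-1})\leq(1+1/\ve)\,2\kappa(\calE(\boldu^0)-\calEmin)$ via \eqref{eq:TIMTelescopeEstimate}, giving $O(\kappa+\kappa/\ve)$. The entropy term $\ve\sum_n\psi((n-1)\kappa)(\calU(u_\kappa^n)-\calU(u_\kappa^{n-1}))$ is handled by summation by parts: its size is at most $\ve\norm{\psi'}_{\LL^1}\sup_n\calU(u_\kappa^n)$ plus boundary terms that vanish, which is $O(\ve)$ by \eqref{eq:internalEnergyUpperBound} and \eqref{eq:aPrioriLIH1u}. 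Combining the two one-sided inequalities then gives \eqref{eq:approxEquationUprelim}, and the choice $\ve=\sqrt\kappa$ gives \eqref{eq:approxEquationU}.

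The main obstacle is making sure the flow interchange Lemma~\ref{le:flowInterchange} applies. This requires $u_\kappa^n\in\dom(\calV_\ve)$, which follows from $\LL^2\cap\calX^M_\frakm\subseteq\dom(\calU)$. It also requires the flow trajectory to meet the hypotheses of Lemma~\ref{le:freeEnergyAlongPotentialFlow}, namely $H^2$ initial data, which Lemma~\ref{le:aPrioriH2} provides. A second point to watch is that the sign-flip trick for $-V$ keeps the same constant $\lambda_\ve$; it does, since $\lambda_\ve$ depends only on norms of $V$.
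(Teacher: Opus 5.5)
Your strategy is the paper's. It uses flow interchange with the $\lambda_\ve$-flow of $\calV_\ve$, Lemma~\ref{le:freeEnergyAlongPotentialFlow} with $p=0$, the telescope estimate~\eqref{eq:TIMTelescopeEstimate} together with $\abs{\lambda_\ve}\lesssim1+1/\ve$, the bound~\eqref{eq:internalEnergyUpperBound} for the entropy part, and the symmetry $V\mapsto-V$.

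\textbf{The summation step fails as written.} You multiply the one-sided per-step inequalities by $\psi((n-1)\kappa)$, but $\psi$ is not assumed non-negative. At indices where $\psi((n-1)\kappa)<0$ the inequality reverses, so neither of your two summed inequalities is justified for sign-changing $\psi$. The paper avoids this by first reducing to $\psi\geq0$. Since the expression inside the absolute value in~\eqref{eq:approxEquationUprelim} is linear in $\psi$, you can do the same: write $\psi=(\psi+\norm{\psi}_\infty\chi)-\norm{\psi}_\infty\chi$ with $\chi\in\CC_c^1((0,\infty))$, $0\leq\chi\leq1$, and $\chi=1$ on the support of $\psi$. The resulting dependence of the constant on $\chi$ is allowed, since $C_9$ may depend on $\psi$.

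Alternatively, combine your two inequalities per step, before summing, into
\[\abs{a_n}\leq E_n-\ve\bigparen{\calU(u_\kappa^n)-\calU(u_\kappa^{n-1})},\qquad a_n:=\calV(u_\kappa^n)-\calV(u_\kappa^{n-1})-\kappa\boldN[\boldu_\kappa^n,\nabla V],\]
where $E_n$ collects the $\ve C_8$- and $\calW_m^2$-errors. Then multiply by $\abs{\psi((n-1)\kappa)}$ and sum by parts in the entropy term, using that $\abs{\psi}$ is Lipschitz and $\psi(0)=0$. This again gives an $O(\ve)$ contribution.

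\textbf{The claimed $O(\kappa)$ summation-by-parts error is not justified, but also not needed.} You bound it by $\norm{\psi''}_\infty$, which need not exist for $\psi\in\CC_c^1$. There is in fact no error. Since $\olu_\kappa=u_\kappa^n$ on $((n-1)\kappa,n\kappa]$, you have $\int_{(n-1)\kappa}^{n\kappa}\psi'\dd t=\psi(n\kappa)-\psi((n-1)\kappa)$, and $\psi(0)=0$. Hence the identity
\[\sum_{n\geq1}\psi((n-1)\kappa)\bigparen{\calV(u_\kappa^n)-\calV(u_\kappa^{n-1})}=-\int_0^\infty\psi'(t)\calV(\olu_\kappa(t))\dd t\]
is exact, and this is how the paper uses it. With these two corrections your argument is complete.
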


\begin{proof}
	The idea is to apply the flow interchange estimate~\eqref{eq:flowInterchangeEstimate} to the regularised drift flow, generated by the potential $\calV_\ve$ introduced in~\eqref{eq:defPotentialEnergyReg}. First we note that it suffices to check~\eqref{eq:approxEquationUprelim} for non-negative $\psi$. Indeed, using $-\calV_V=\calV_{-V}$ and $\boldN[\boldu,-\nabla V]=-\boldN[\boldu,\nabla V]$, the case of a general $\psi$ follows by decomposing $\psi$ into its positive and negative parts, and replacing $V$ with $-V$ for the negative part.

	Rearranging the first integral in \eqref{eq:approxEquationUprelim} yields
	\begin{align}\label{eq:approxEquationUprelimLHS}
		-\int_0^\infty\psi'(t)\calV_\ve(\olu_\kappa(t))\dd t&=-\sum_{n=1}^\infty\int_{(n-1)\kappa}^{n\kappa}\psi'(t)\calV_\ve(u_\kappa^n)\dd t\notag\\
		&=\sum_{n=1}^\infty\bigparen{\psi((n-1)\kappa)-\psi(n\kappa)}\calV_\ve(u_\kappa^n)\notag\\
		&=\sum_{n=0}^\infty\psi(n\kappa)\bigparen{\calV_\ve(u_\kappa^{n+1})-\calV_\ve(u_\kappa^n)}.
	\end{align}
	Note that the integral and all sums are finite since $\psi$ has compact support. We also used $\psi(0)=0$ in the last line. Applying the flow interchange estimate~\eqref{eq:flowInterchangeEstimate} to $\calF=\calV_\ve$ and combining it with Lemma~\ref{le:freeEnergyAlongPotentialFlow} (here with $p=0$), which is applicable thanks to Lemma~\ref{le:aPrioriH2}, yields
	\[\calV_\ve(u_\kappa^{n+1})-\calV_\ve(u_\kappa^n)\leq\kappa\boldN[\boldu_\kappa^{n+1},\nabla V]+\kappa\ve C_8(1+\calE(\boldu_\kappa^{n+1}))-\frac{\lambda_\ve}{2}\calW_m^2(u_\kappa^{n+1},u_\kappa^n)\]
	for every $\kappa>0$, $n\in\N_0$. Inserting this into~\eqref{eq:approxEquationUprelimLHS} and using $\psi\geq0$, we end up with
	\begin{align}\label{eq:approxEquationUprelim2}
		-\int_0^\infty\psi'(t)\calV_\ve(\olu_\kappa(t))\dd t&\leq\sum_{n=0}^\infty\psi(n\kappa)\Bigparen{\kappa\boldN[\boldu_\kappa^{n+1},\nabla V]-\frac{\lambda_\ve}{2}\calW_m^2(u_\kappa^{n+1},u_\kappa^n)}\notag\\
		&\qquad+\sum_{n=0}^\infty\psi(n\kappa)\kappa\ve C_8(1+\calE(\boldu_\kappa^{n+1}))\notag\\
		&\leq\int_0^\infty\underline{\psi}_\kappa(t)\boldN[\olbfu_\kappa(t),\nabla V]\dd t+|\lambda_\ve|\kappa\norm{\psi}_\infty(\calE(\boldu^0)-\calEmin)\notag\\
		&\qquad+\ve C_8(1+\calE(\boldu^0))\int_0^\infty\underline{\psi}_\kappa(t)\dd t.
	\end{align}
	In the last line we used the telescope estimate~\eqref{eq:TIMTelescopeEstimate}.
	Since $\calV_\ve=\calV+\ve\calU$, we still need to estimate the contribution of $\calU$ in the left-hand side of~\eqref{eq:approxEquationUprelim2}. Relying on \eqref{eq:internalEnergyUpperBound} and \eqref{eq:aPrioriLIH1u}, we have
	\begin{align*}
		\int_0^\infty\psi'(t)\calU(\olu_\kappa(t))\dd t&\leq C_\calU\int_0^\infty\abs{\psi'(t)}\bigparen{1+\norm{\olu_\kappa(t)}_{\LL^2(\Om)}^2}\dd t\\
		&\leq C_\calU\bigparen{1+C_1^2}\int_0^\infty\abs{\psi'(t)}\dd t.
	\end{align*}
	Inserting this into~\eqref{eq:approxEquationUprelim2}, and noting that $\lambda_\ve\geq-C(1+\frac{1}{\ve})$ for a constant $C>0$ depending on $m$ and $V$, we receive
	\begin{equation}
		-\int_0^\infty\psi'(t)\calV(\olu_\kappa(t))\dd t\leq\int_0^\infty\underline{\psi}_\kappa(t)\boldN[\olbfu_\kappa(t),\nabla V]\dd t+C_9\Bigparen{\frac{\kappa}{\ve}+\ve+\kappa}.
	\end{equation}
	Replacing $V$ by $-V$ yields precisely \eqref{eq:approxEquationUprelim}.
\end{proof}

We next establish the flux estimates needed to prove the discrete energy-dissipation inequality \eqref{eq:approxFreeEnergyEDI} and the constitutive relations \eqref{eq:approxFluxConditions}.

\begin{proposition}[Operator bound for constitutive flux]\label{pr:metricSlopeEstimate}
	Let $\boldu=(u,z)\in(\HHn^2(\Om)\cap\calX_\frakm^M)\times\LL^2(\Om)$, $w\in\calX_\frakm^M$, and suppose that $\abs{\pa\calE}_{\calD_w}(\boldu)<\infty$. Then, for every $\boldzeta\in\CC^1(\Om;\R^d)$ with $\boldzeta\cdot\n=0$ and $p\in\LL^2(\Om)$ it holds
	\begin{equation}\label{eq:operatorBoundFlux}
		-\boldN[\boldu,\boldzeta]-(p,z-K(u))_{\LL^2(\Om)}\leq\Bigparen{\int_\Om\bigparen{m(u)\abs{\boldzeta}^2+\tau(w)p^2}\dd x}^{1/2}\abs{\pa\calE}_{\calD_w}(\boldu).
	\end{equation}
	Consequently, there exists a unique continuous linear functional $L_{\boldu,w}$ on the separable Hilbert space $Y:=\LL^2\bigparen{\Om,m(u)\Leb^d\vert_\Om;\R^d}\times\LL^2(\Om,\tau(w)\Leb^d\vert_\Om)$ such that
	\begin{equation}\label{eq:constitutiveFluxLinearFunctional}
		L_{\boldu,w}(\boldzeta,p)=-\boldN[\boldu,\boldzeta]-(p,z-K(u))_{\LL^2(\Om)},
	\end{equation}
	for every $(\boldzeta,p)\in\set{\tilde{\boldzeta}\in\CC^1(\Om;\R^d)}{\tilde{\boldzeta}\cdot\n=0}\times\LL^2(\Om)$, identified with its equivalence class in $Y$. Furthermore,
	\begin{align}\label{eq:L-norm}
		\norm{L_{\boldu,w}}_{Y^*}\leq\abs{\pa\calE}_{\calD_{w}}(\boldu). 
	\end{align}
\end{proposition}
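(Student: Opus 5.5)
The plan is to test the metric slope along curves generated by the nonlinear drift-diffusion flow and to let the viscosity vanish afterwards. Fix $\boldzeta$, $p$ as in the statement and $\ve>0$. Let $v^\ve_s$ solve \eqref{eq:viscousPotentialEquation} with drift field $\boldzeta$, viscosity $\ve$ and initial value $u$; this is admissible by Proposition~\ref{pr:driftDiffusion}. Set $z_s:=z+sp$. The comparison curve $s\mapsto(v^\ve_s,z_s)$ stays in $\calX_\frakm^M\times\LL^2(\Om)$: mass is conserved and the bounds in $\rangeM$ hold because $m$ degenerates at the endpoints. By definition of the slope,
\[
\abs{\pa\calE}_{\calD_w}(\boldu)\geq\limsup_{s\searrow0}\frac{\bigl[\calE(\boldu)-\calE[v^\ve_s,z_s]\bigr]_+}{\calD_w(\boldu,[v^\ve_s,z_s])}.
\]
Lemma~\ref{le:freeEnergyAlongPotentialFlow} already controls the numerator from below:
\[
\liminf_{s\searrow0}\frac1s\bigl(\calE(\boldu)-\calE[v^\ve_s,z_s]\bigr)\geq-\boldN[\boldu,\boldzeta]-(p,z-K(u))_{\LL^2(\Om)}-\ve C_8(1+\calE(\boldu)).
\]

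The main work is the denominator, an upper bound on $\frac1s\calD_w$. The $z$-part is exact: $\frac1s\norm{\sqrt{\tau(w)}(z_s-z)}_{\LL^2(\Om)}=\norm{\sqrt{\tau(w)}p}_{\LL^2(\Om)}$. For the $\calW_m$-part, write the equation as a continuity equation $\pa_sv+\divv\boldj_s=0$ with flux $\boldj_s=-m(v_s)\boldzeta-\ve\nabla v_s$, which satisfies the no-flux condition. Then
\[
\calW_m^2(u,v^\ve_s)\leq s\int_0^s\int_\Om\frac{\abs{m(v_t)\boldzeta+\ve\nabla v_t}^2}{m(v_t)}\dd x\dd t.
\]
Expanding the square and using Cauchy--Schwarz/Young, the right-hand side is at most $(1+\eta)\int m(v_t)\abs{\boldzeta}^2+(1+\eta^{-1})\ve^2\int\abs{\nabla v_t}^2/m(v_t)$. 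Lemma~\ref{le:nabla2mEstimate} bounds the last integral by $C_3(\norm{\Delta v_t}_{\LL^2(\Om)}+\norm{\nabla v_t}_{\LL^2(\Om)}^2)$. As $s\searrow0$, the $\HH^2$-continuity of $v^\ve$ at $s=0$ from Proposition~\ref{pr:driftDiffusion}, together with the uniform convergence $m(v_t)\to m(u)$ (since $\HH^2$-continuity gives convergence in measure and $m$ is bounded Lipschitz), yields
\[
\limsup_{s\searrow0}\frac1{s^2}\calD_w^2\leq(1+\eta)\int_\Om m(u)\abs{\boldzeta}^2\dd x+\int_\Om\tau(w)p^2\dd x+(1+\eta^{-1})\ve^2C_3\bigl(\norm{\Delta u}_{\LL^2(\Om)}+\norm{\nabla u}_{\LL^2(\Om)}^2\bigr).
\]
I expect the main obstacle to be this continuity of the action at $s=0$ together with the degeneracy of $m$. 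Lemma~\ref{le:nabla2mEstimate} is exactly the tool for the degeneracy, and $u\in\HHn^2(\Om)$ is used in an essential way here.

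Dividing numerator by denominator gives an inequality of the form $-\boldN[\boldu,\boldzeta]-(p,z-K(u))_{\LL^2(\Om)}-O(\ve)\leq(\text{denominator})^{1/2}\abs{\pa\calE}_{\calD_w}(\boldu)$. If the left-hand side is nonpositive there is nothing to prove, which also handles the positive part in the slope. Letting $\ve\to0$ with $\eta$ fixed and then $\eta\to0$ gives \eqref{eq:operatorBoundFlux}.

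For the functional, apply \eqref{eq:operatorBoundFlux} to $(-\boldzeta,-p)$. The map $(\boldzeta,p)\mapsto-\boldN[\boldu,\boldzeta]-(p,z-K(u))_{\LL^2(\Om)}$ is linear, so its absolute value is bounded by $\norm{(\boldzeta,p)}_Y\abs{\pa\calE}_{\calD_w}(\boldu)$. The map is therefore well defined on equivalence classes in $Y$: elements with vanishing $Y$-norm are sent to zero. Next I would check that the admissible pairs are dense in $Y$. Tangential $\CC^1$ fields are dense in $\LL^2(\Om,m(u)\Leb^d;\R^d)$ because $m(u)$ is bounded and fields compactly supported in $\Om$ are dense. $\LL^2(\Om)$ is dense in the $\tau(w)$-weighted space since $\tau$ is bounded above and below. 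Extension by continuity then gives a unique $L_{\boldu,w}\in Y^*$ satisfying \eqref{eq:constitutiveFluxLinearFunctional} and the norm bound \eqref{eq:L-norm}.
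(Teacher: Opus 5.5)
Your proposal is correct and follows the paper's own argument, namely its Case~1: viscous drift-diffusion variations paired with $z+sp$, Lemma~\ref{le:freeEnergyAlongPotentialFlow} for the energy decrease, the action bound via Lemma~\ref{le:nabla2mEstimate} for $\calD_w$, then $\ve\to0$ and extension to $Y$ by density. The one difference is that you keep the inequality in multiplied-out form and use $\abs{\pa\calE}_{\calD_w}(\boldu)<\infty$, so the $O(\ve)$ contribution of the denominator vanishes as $\ve\to0$ and the degenerate case $m(u)\boldzeta\equiv0$, $p\equiv0$ is covered by linearity; the paper instead divides by $\bigl(\int_\Om(m(u)\abs{\boldzeta}^2+\tau(w)p^2)\dd x\bigr)^{1/2}$ and therefore shows $\int_\Om P(u)\divv\boldzeta\dd x=0$ separately via the regularisation $P_\delta$ (one minor slip: for $M=+\infty$ the mobility is Lipschitz but need not be bounded, and Lipschitz continuity is all your convergence and density steps actually need).
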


\begin{proof}
 To show~\eqref{eq:operatorBoundFlux}, we distinguish two cases.\\

	\noindent
	\textbf{Case 1:} $m(u)\boldzeta\not\equiv0$ or $p\not\equiv0$.\\\noindent
	We may assume that the right-hand side in~\eqref{eq:operatorBoundFlux} is finite, otherwise there is nothing to do. Abbreviate $s\mapsto z_s:=z+sp$ and let $s\mapsto v_{\ve,s}$ be the solution to~\eqref{eq:viscousPotentialEquation} with initial value $v_{\ve,0}=u$ for $\ve>0$. We emphasise that $m(u)\boldzeta\not\equiv0$ would imply $v_{\ve,s}\ne u$ for every $s\in(0,\infty)$ and small enough $\ve$, so that $(v_{\ve,s},z_s)\ne\boldu$ for every $s\in(0,\infty)$ is guaranteed.

	Similarly as in Proposition~\ref{pr:aPrioriH2DeGiorgiPrelim} we use the definition of $\calW_m$ and \eqref{eq:viscousPotentialEquation} $\pa_sv_\ve=\divv(m(v_\ve)\boldzeta+\ve\nabla v_\ve)$ to obtain $\calW_m^2(u,v_{\ve,s})\leq s\int_0^s\int_\Om\frac{\abs{m(v_\ve)\boldzeta+\ve\nabla v_\ve}^2}{m(v_\ve)}\dd x\dd t$, so that
	\begin{align*}
		\frac{1}{s^2}\calD_w^2(\boldu,[v_{\ve,s},z_s])&\leq\frac{1}{s}\int_0^s\int_\Om\frac{\abs{m(v_{\ve,t})\boldzeta+\ve\nabla v_{\ve,t}}^2}{m(v_{\ve,t})}\dd x\dd t+\int_\Om\tau(w)p^2\dd x\\
		&\leq\frac{1}{s}\int_0^s\int_\Om\bigparen{m(v_{\ve,t})\abs{\boldzeta}^2+2\ve\abs{\boldzeta}\abs{\nabla v_{\ve,t}}}\dd x\dd t+\int_\Om\tau(w)p^2\dd x\\
		&\quad+\frac{\ve^2C_3}{s}\int_0^s\bigparen{\norm{\Delta v_{\ve,t}}_{\LL^2(\Om)}+\norm{\nabla v_{\ve,t}}_{\LL^2(\Om;\R^d)}^2}\dd x\dd t\\
		&\xrightarrow{s\searrow0}\int_\Om\bigparen{m(u)\abs{\boldzeta}^2+\tau(w)p^2}\dd x\\
		&\quad+2\ve\bignorm{\abs{\boldzeta}\abs{\nabla u}}_{\LL^1(\Om)}+\ve^2C_3\bigparen{\norm{\Delta u}_{\LL^2(\Om)}+\norm{\nabla u}_{\LL^2(\Om;\R^d)}^2}.
	\end{align*}
	Again, we relied on \eqref{eq:nabla2mEstimate} to estimate $\frac{\abs{\nabla v_t}^2}{m(v_t)}$, and on Proposition~\ref{pr:driftDiffusion} to justify the limit passage $s\searrow0$. Hence, invoking Lemma~\ref{le:freeEnergyAlongPotentialFlow}, we get
	\begin{align*}
		\abs{\pa\calE}_{\calD_w}(\boldu)&=\limsup_{\boldv\to\boldu}\frac{[\calE(\boldu)-\calE(\boldv)]_+}{\calD_w(\boldu,\boldv)}\geq\sup_{\ve>0}\limsup_{s\searrow0}\frac{[\calE(\boldu)-\calE[v_{\ve,s}(u),z_s]]_+}{\calD_w(\boldu,[v_{\ve,s}(u),z_s])}\\
		&\geq\sup_{\ve>0}\frac{\bigbrackets{-\boldN[\boldu,\boldzeta]-(p,z-K(u))_{\LL^2(\Om)}-\ve C_8(1+\calE(\boldu))}_+}{\bigparen{\int_\Om(m(u)\abs{\boldzeta}^2+\tau(w)p^2)\dd x+2\ve\norm{\abs{\boldzeta}\abs{\nabla u}}_{\LL^1(\Om)}+\ve^2C(u)}^{1/2}}\\
		&=\frac{\bigbrackets{-\boldN[\boldu,\boldzeta]-(p,z-K(u))_{\LL^2(\Om)}}_+}{\bigparen{\int_\Om(m(u)\abs{\boldzeta}^2+\tau(w)p^2)\dd x}^{1/2}}.
	\end{align*}

	\noindent
	\textbf{Case 2:} $m(u)\boldzeta=0$ and $p=0$.\\\noindent
	Since the right-hand side in~\eqref{eq:operatorBoundFlux} is zero in this case, we need to show $\boldN[\boldu,\boldzeta]=0$. Recalling the definition of $\boldN$ in~\eqref{eq:defBoldN}, we observe that $\int_\Om P(u)\divv(\boldeta)\dd x$ is the only term requiring further consideration. To handle this, we use the regularisations $F_\delta$ and $P_\delta$ introduced in Lemma~\ref{le:freeEnergyAlongPotentialFlow}. Integrating by parts for fixed $\delta>0$ and then passing to the limit $\delta\searrow0$ with similar arguments as in Step~3 of Lemma~\ref{le:freeEnergyAlongPotentialFlow} yields
	\begin{align*}
		\int_\Om P(u)\divv(\boldeta)\dd x&=\lim_{\delta\searrow0}\int_\Om P_\delta(u)\divv(\boldeta)\dd x\\
		&=\lim_{\delta\searrow0}-\int_\Om m(u)\boldeta\cdot F_\delta''(u)\nabla u\dd x=0,
	\end{align*}
	which concludes the proof of the bound~\eqref{eq:operatorBoundFlux}.\smallskip

	The asserted continuous extension of $L_{\boldu,w}$ follows from~\eqref{eq:operatorBoundFlux} combined with $\tau(w)\sim1$ and the density of admissible test functions $(\boldzeta,p)$ in $Y$.
\end{proof}

Proposition~\ref{pr:metricSlopeEstimate} shows that the variation of the energy defines a bounded functional on the relevant weighted Hilbert space, with norm controlled by the metric slope. We now apply the Riesz representation theorem to construct fluxes for the piecewise constant and De\,Giorgi interpolants and obtain corresponding dissipation bounds.

\begin{proposition}[Flux interpolants $\olj_\kappa$, $\wtj_\kappa$]\label{pr:existencePiecewiseFlux}
	For every $\kappa>0$, there exist fluxes $\olj_\kappa,\wtj_\kappa\in\LL^2([0,\infty);\LL^{4/3}(\Om;\R^d))$ satisfying, for almost every $t\in[0,\infty)$,
	\begin{equation}\label{eq:discreteFluxCharacterisation}
		\int_\Om\olj_\kappa\cdot\boldzeta\dd x={\boldN[\olbfu_\kappa,\boldzeta]},\qquad \int_\Om\wtj_\kappa\cdot\boldzeta\dd x={\boldN[\wtbfu_\kappa,\boldzeta]}
	\end{equation}
	for all $\boldzeta\in\CC^1(\varcl{\Om};\R^d)$ with $\boldzeta\cdot\n=0$ on $\boundary{\Om}$, and
	\begin{align}
		\int_\Om\Bigparen{\frac{\abs{\olj_\kappa}^2}{m(\olu_\kappa)}+\frac{1}{\tau(\ulu_\kappa)}\abs{\olz_\kappa-K(\olu_\kappa)}^2}\dd x&\leq\abs{\pa\calE}_{\calD_{\ulu_\kappa}}^2(\olbfu_\kappa),\label{eq:PcdiscreteFluxSlopeEstimate}\\
		\int_\Om\Bigparen{\frac{\abs{\wtj_\kappa}^2}{m(\wtu_\kappa)}+\frac{1}{\tau(\ulu_\kappa)}\abs{\wtz_\kappa-K(\wtu_\kappa)}^2}\dd x&\leq\abs{\pa\calE}_{\calD_{\ulu_\kappa}}^2(\wtbfu_\kappa).\label{eq:discreteFluxSlopeEstimate}
	\end{align}
	Moreover, it holds the following a priori estimate
	\begin{equation}\label{eq:aPrioriL2L43boldj}
		\norm{\olj_\kappa}_{\LL^2([0,\infty);\LL^{4/3}(\Om;\R^d))}^2+\norm{\wtj_\kappa}_{\LL^2([0,\infty);\LL^{4/3}(\Om;\R^d))}^2\leq2C_2\norm{m'}_\infty(\calE(\boldu^0)-\calEmin)
	\end{equation}
	with $C_2$ from Lemma~\ref{le:aPrioriEasy}.
\end{proposition}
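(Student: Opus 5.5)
The plan is to apply Proposition~\ref{pr:metricSlopeEstimate} pointwise in time and use Riesz representation on the weighted Hilbert space $Y$. Fix $t$ with $\olbfu_\kappa(t)=\boldu_\kappa^{n+1}$, weight $w=\ulu_\kappa(t)=u_\kappa^n$. By Lemma~\ref{le:aPrioriH2} we have $\boldu_\kappa^{n+1}\in(\HHn^2(\Om)\cap\calX_\frakm^M)\times\LL^2(\Om)$. The slope inequality~\eqref{eq:slopeEstimate} at $\theta=\kappa$ gives $\abs{\pa\calE}_{\calD_{u_\kappa^n}}(\boldu_\kappa^{n+1})\le\calD_{u_\kappa^n}(\boldu_\kappa^n,\boldu_\kappa^{n+1})/\kappa<\infty$. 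Hence $L:=L_{\boldu_\kappa^{n+1},u_\kappa^n}\in Y^*$, and the Riesz theorem yields $(\boldsymbol{g},q)\in Y$ with
\[L(\boldzeta,p)=\int_\Om m(u)\boldsymbol{g}\cdot\boldzeta\dd x+\int_\Om\tau(w)qp\dd x\]
and $\norm{(\boldsymbol{g},q)}_Y=\norm{L}_{Y^*}\le\abs{\pa\calE}_{\calD_w}(\boldu)$.

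Testing with $\boldzeta=0$ and arbitrary $p$ identifies $\tau(w)q=-(z-K(u))$. Testing with $p=0$ gives $\int m(u)\boldsymbol{g}\cdot\boldzeta=-\boldN[\boldu,\boldzeta]$, so I set $\olj_\kappa(t):=-m(u)\boldsymbol{g}$. Then $\abs{\olj}^2/m(u)=m(u)\abs{\boldsymbol{g}}^2$ (with $0/0=0$; note $\olj=0$ where $m(u)=0$), and $\tau(w)q^2=\abs{z-K(u)}^2/\tau(w)$. Thus $\norm{(\boldsymbol{g},q)}_Y^2$ is exactly the left-hand side of~\eqref{eq:PcdiscreteFluxSlopeEstimate}, which proves that bound and the first identity in~\eqref{eq:discreteFluxCharacterisation}. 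For the De\,Giorgi interpolant the same construction applies at $t=n\kappa+\theta$, with $\boldu=\wtbfu_\kappa(t)\in\HHn^2\times\LL^2$ by Lemma~\ref{le:aPrioriH2} and finite slope by~\eqref{eq:slopeEstimate}; the weight is again $w=u_\kappa^n=\ulu_\kappa(t)$, consistent with the metric in~\eqref{eq:deGiorgiInterpolant}. This gives~\eqref{eq:discreteFluxSlopeEstimate}.

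For integrability, Hölder's inequality gives
\[\norm{\boldj}_{\LL^{4/3}}^2\le\norm{m(u)}_{\LL^2}\int\abs{\boldj}^2/m(u).\]
Since $m(u)\le\norm{m'}_\infty u$ and $\norm{u}_{\LL^2}\le C_2$ by~\eqref{eq:aPrioriLIH1u}, we get $\norm{\boldj}_{\LL^{4/3}}^2\le C_2\norm{m'}_\infty\abs{\pa\calE}^2$. The time integral of the squared slopes is controlled as follows. For $\olbfu_\kappa$, \eqref{eq:slopeEstimate} bounds $\kappa\abs{\pa\calE}^2(\boldu_\kappa^{n+1})$ by $\calD^2/\kappa$, which is twice the first term of~\eqref{eq:discreteEDB}. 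For $\wtbfu_\kappa$, it bounds $\int_0^\kappa\abs{\pa\calE}^2\dd\theta$ by twice the second term. Summing~\eqref{eq:discreteEDB} over $n$ telescopes to at most $\calE(\boldu^0)-\calEmin$, which yields~\eqref{eq:aPrioriL2L43boldj} (adjusting constants).

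The main obstacle is measurability in $t$ of $\wtj_\kappa$, since $\olj_\kappa$ is piecewise constant. I would handle it by noting that $\wtj_\kappa(t)$ is uniquely determined through~\eqref{eq:discreteFluxCharacterisation} by $\wtbfu_\kappa(t)$: it is a distribution tested against a countable dense family of $\boldzeta$. The map $t\mapsto\boldN[\wtbfu_\kappa(t),\boldzeta]$ is Borel, because $\wtbfu_\kappa$ is Borel into $\HH^1\times\LL^2$ and the $\HH^2$-terms can be rewritten after integration by parts via $\int\nabla u\cdot\nabla\divv(m(u)\boldzeta)$-type limits. Weak measurability combined with Pettis' theorem in the separable space $\LL^{4/3}$ then gives strong measurability. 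A secondary subtlety is that uniqueness of the representation holds only up to the $m(u)$-weighted class; defining $\boldj:=-m(u)\boldsymbol{g}$ removes this ambiguity.
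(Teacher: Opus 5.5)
Your proposal is correct and follows essentially the same route as the paper. Both use the Riesz representation of $L_{\boldu,w}$ from Proposition~\ref{pr:metricSlopeEstimate}, identify the $p$-component as $-(z-K(u))/\tau(w)$, define the flux as $-m(u)\boldsymbol{g}$, apply H\"older with $\norm{m(u)}_{\LL^2(\Om)}\leq\norm{m'}_\infty C_2$, and sum \eqref{eq:discreteEDB} against \eqref{eq:slopeEstimate}, which gives exactly the constant in \eqref{eq:aPrioriL2L43boldj}. Your measurability argument (uniqueness plus Pettis' theorem in $\LL^{4/3}(\Om;\R^d)$) is more careful than the paper's appeal to continuity of the Riesz map, since $Y$ depends on $t$ through $m(\wtu_\kappa(t))$. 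However, justify measurability of $t\mapsto\boldN[\wtbfu_\kappa(t),\boldzeta]$ by noting that $t\mapsto\Delta\wtu_\kappa(t)$ and $t\mapsto\nabla m(\wtu_\kappa(t))$ are weakly, hence strongly, measurable in $\LL^2(\Om)$ (move the derivatives onto test functions in $\CC_c^\infty(\Om)$), rather than by shifting derivatives onto $\divv(m(u)\boldzeta)$, whose $\HH^1$-regularity is not available in general.
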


\begin{proof}
	Fix $n\in\N$. We want to apply Proposition~\ref{pr:metricSlopeEstimate} to $\boldu=\boldu_\kappa^n,\wtbfu_\kappa(t)$ and $w=u_\kappa^{n-1}$ with $t\in[n\kappa,(n+1)\kappa)$. Let $\mathbb{I}_R:Y\to Y^*$ denote the Riesz isomorphism. Since the discrete EDB~\eqref{eq:discreteEDB} implies $\abs{\pa\calE}_{\calD_w}(\boldu_\kappa^n),\abs{\pa\calE}_{\calD_w}(\wtbfu_\kappa(t))<\infty$ for almost every $t$, we can define
	\[(J^n,\varrho^n):=\mathbb{I}_R^{-1}(L_{\boldu_\kappa^n,w}),\qquad(\widetilde{J}^n,\widetilde{\varrho}^n)_t:=\mathbb{I}_R^{-1}(L_{\wtbfu_\kappa(t),w}).\]
	Note that $t\mapsto(\widetilde{J}^n,\widetilde{\varrho}^n)_t$ is Bochner measurable since $t\mapsto L_{\wtbfu_\kappa(t),w}$ is Bochner measurable and $\mathbb{I}_R$ is continuous. Defining $\boldj_\kappa^n:=-m(u_\kappa^n)J^n\in\LL^2(\Om,\frac{1}{m(u_\kappa^n)}\Leb^d\vert_\Om;\R^d)$ and recalling the definition of $L_{\boldu_\kappa^n,w}$ in Proposition~\ref{pr:metricSlopeEstimate}, we see that $\boldj_\kappa^n$ satisfies the first identity in~\eqref{eq:discreteFluxCharacterisation} on $(n\kappa,(n+1)\kappa)$. Moreover, comparing the expressions for arbitrary $p$ in~\eqref{eq:constitutiveFluxLinearFunctional}, we find $\varrho^n=-\frac{1}{\tau(u_\kappa^{n-1})}(z_\kappa^n-K(u_\kappa^n))$. Furthermore,
	\begin{align*}
		\int_\Om\Bigparen{\frac{\abs{\boldj_\kappa^n}^2}{m(u_\kappa^n)}+\frac{1}{\tau(u_\kappa^{n-1})}\abs{z_\kappa^n-K(u_\kappa^n)}^2}\dd x&=\norm{(J^n,\varrho^n)}_Y^2\\
		&=\norm{L_{\boldu_\kappa^n,w}}_{Y^*}^2\leq\abs{\pa\calE}_{\calD_w}^2(\boldu_\kappa^n).
	\end{align*}
	Similarly, we show that $\wtj_\kappa^n:=-m(\wtu_\kappa)\widetilde{J}^n$ satisfies the second identity in~\eqref{eq:discreteFluxCharacterisation} on $(n\kappa,(n{+}1)\kappa)$, that $\widetilde{\varrho}^n=-\frac{1}{\tau(u_\kappa^{n-1})}(\wtz_\kappa-K(\wtu_\kappa))$, and
	\[\int_\Om\Bigparen{\frac{\abs{\wtj_\kappa^n}^2}{m(\wtu_\kappa)}+\frac{1}{\tau(u_\kappa^{n-1})}\abs{\wtz_\kappa-K(\wtu_\kappa)}^2}\dd x\leq\abs{\pa\calE}_{\calD_w}^2(\wtbfu_\kappa).\]
	Thus, if we define $\olj_\kappa(t):=\boldj_\kappa^{\lceil t/\kappa\rceil}$ and $\wtj_\kappa(t):=\wtj_\kappa^{\lceil t/\kappa\rceil}(t)$, we immediately infer \eqref{eq:discreteFluxCharacterisation}, \eqref{eq:PcdiscreteFluxSlopeEstimate} and \eqref{eq:discreteFluxSlopeEstimate}. It remains to see~\eqref{eq:aPrioriL2L43boldj}. H\"older's inequality yields for almost every $t\in[0,\infty)$
	\[\norm{\olj_\kappa(t)}_{\LL^{4/3}(\Om)}^2=\Bignorm{\sqrt{m(\olu_\kappa(t))}\frac{\olj_\kappa(t)}{\sqrt{m(\olu_\kappa(t))}}}_{\LL^{4/3}(\Om)}^2\leq\Bignorm{m(\olu_\kappa(t))}_{\LL^2(\Om)}\int_\Om\frac{\abs{\olj_\kappa(t)}^2}{m(\olu_\kappa(t))}\dd x\]
	and similarly for $\wtj_\kappa(t)$. Hence, invoking the Lipschitz continuity of $m$ and~\eqref{eq:aPrioriLIH1u}, we obtain
	\begin{align*}
		\norm{\olj_\kappa(t)}_{\LL^{4/3}(\Om)}^2+\norm{\wtj_\kappa(t)}_{\LL^{4/3}(\Om)}^2&\leq C_2\norm{m'}_\infty\Bigparen{\int_\Om\frac{\abs{\olj_\kappa(t)}^2}{m(\olu_\kappa(t))}+\frac{\abs{\wtj_\kappa(t)}^2}{m(\wtu_\kappa(t))}\dd x}\\
		&\leq C_2\norm{m'}_\infty\bigparen{\abs{\pa\calE}_{\calD_{\ulu_\kappa(t)}}^2(\olbfu_\kappa(t))+\abs{\pa\calE}_{\calD_{\ulu_\kappa(t)}}^2(\wtbfu_\kappa(t))}.
	\end{align*}
	Integrating over time $t\in[0,\infty)$, using the discrete EDB~\eqref{eq:discreteEDB} and slope estimate~\eqref{eq:slopeEstimate}, we obtain~\eqref{eq:aPrioriL2L43boldj}.
\end{proof}

\begin{corollary}[Discrete energy-dissipation inequality]
	For every $n\in\N_0$, $\kappa>0$, it holds
	\begin{align*}
		\calE(\boldu_\kappa^{n+1})+\int_{n\kappa}^{(n+1)\kappa}\int_\Om\frac12\biggbraces{&\Bigparen{\frac{\abs{\olj_\kappa}^2}{m(\olu_\kappa)}+\frac{1}{\tau(\ulu_\kappa)}\abs{\olz_\kappa-K(\olu_\kappa)}^2}\notag\\
			&+\Bigparen{\frac{\abs{\wtj_\kappa}^2}{m(\wtu_\kappa)}+\frac{1}{\tau(\ulu_\kappa)}\abs{\wtz_\kappa-K(\wtu_\kappa)}^2}}\dd x\dd t\leq\calE(\boldu_\kappa^n).
	\end{align*}
	From there, \eqref{eq:approxFreeEnergyEDI} is easily derived.
\end{corollary}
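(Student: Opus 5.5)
The plan is to combine the discrete energy-dissipation balance~\eqref{eq:discreteEDB} with the slope inequality~\eqref{eq:slopeEstimate} and the flux bounds of Proposition~\ref{pr:existencePiecewiseFlux}. Fix $n\in\N_0$ and set $w:=u_\kappa^n$, so that on $(n\kappa,(n+1)\kappa)$ we have $\ulu_\kappa=u_\kappa^n$, $\olbfu_\kappa=\boldu_\kappa^{n+1}$ and $\wtbfu_\kappa(n\kappa+\theta)$ is the De\,Giorgi minimiser with parameter $\theta$.

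\textbf{Step 1: the De\,Giorgi part.} For $t=n\kappa+\theta$, the slope inequality~\eqref{eq:slopeEstimate} gives
\[
\abs{\pa\calE}_{\calD_w}^2(\wtbfu_\kappa(t))\le \frac{\calD_w^2(\boldu_\kappa^n,\wtbfu_\kappa(t))}{\theta^2}.
\]
Combining this with~\eqref{eq:discreteFluxSlopeEstimate} and integrating in $\theta\in(0,\kappa)$ shows that
\[
\frac12\int_{n\kappa}^{(n+1)\kappa}\!\int_\Om\Bigl(\frac{\abs{\wtj_\kappa}^2}{m(\wtu_\kappa)}+\frac{\abs{\wtz_\kappa-K(\wtu_\kappa)}^2}{\tau(\ulu_\kappa)}\Bigr)\dd x\dd t
\]
is bounded by the second term on the left of~\eqref{eq:discreteEDB}.

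\textbf{Step 2: the piecewise constant part.} Here I apply~\eqref{eq:slopeEstimate} at $\theta=\kappa$. In this case the De\,Giorgi minimiser is $\boldu_\kappa^{n+1}$, by choosing the selection consistently; the inequality holds for any minimiser of the $\kappa$-problem, and $\boldu_\kappa^{n+1}$ is one. This gives $\abs{\pa\calE}_{\calD_w}^2(\boldu_\kappa^{n+1})\le \calD_w^2(\boldu_\kappa^n,\boldu_\kappa^{n+1})/\kappa^2$. Multiplying by $\kappa/2$ and using~\eqref{eq:PcdiscreteFluxSlopeEstimate}, the $\olj_\kappa$-integral over $(n\kappa,(n+1)\kappa)$, with the factor $\frac12$, is bounded by $\calD_w^2(\boldu_\kappa^n,\boldu_\kappa^{n+1})/(2\kappa)$, which is the first term in~\eqref{eq:discreteEDB}.

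\textbf{Step 3: conclusion.} Adding both bounds and inserting them into~\eqref{eq:discreteEDB} yields the claimed one-step inequality. Summing over consecutive steps telescopes to~\eqref{eq:approxFreeEnergyEDI} for grid points. For general $0\le s\le t$, let $s$ lie in cell $k$ and $t$ in cell $n$. Enlarge the integration interval to $[k\kappa,\lceil t/\kappa\rceil\kappa]$, which is allowed since the integrand is nonnegative. Then observe that $\calE(\olbfu_\kappa(t))=\calE(\boldu_\kappa^{\lceil t/\kappa\rceil})$ and $\calE(\ulbfu_\kappa(s))=\calE(\boldu_\kappa^{k})$.

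\textbf{Main obstacle.} The only delicate point is the measurability and well-definedness of the integrands in Step 1, together with the almost-everywhere finiteness of the slopes. Both are already ensured by the Borel selection of $\wtbfu_\kappa$ and by Proposition~\ref{pr:existencePiecewiseFlux}. A related subtlety is the endpoint $\theta=\kappa$ in Step 2, which I handle by applying the slope lemma of~\cite{AmbrosioGigliSavare2008} directly to the minimiser $\boldu_\kappa^{n+1}$.
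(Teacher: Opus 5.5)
Your proof is correct and follows the paper's argument: you apply the slope inequality \eqref{eq:slopeEstimate} at $\theta\in(0,\kappa)$ for the De\,Giorgi interpolant and at $\theta=\kappa$ for $\boldu_\kappa^{n+1}$. Combined with the flux bounds \eqref{eq:PcdiscreteFluxSlopeEstimate}--\eqref{eq:discreteFluxSlopeEstimate}, this dominates the two dissipation terms by the two terms of the discrete balance \eqref{eq:discreteEDB}. Your telescoping derivation of \eqref{eq:approxFreeEnergyEDI}, which enlarges $[s,t]$ to $[\lfloor s/\kappa\rfloor\kappa,\lceil t/\kappa\rceil\kappa]$ using non-negativity of the integrand, correctly fills in the step the paper leaves as ``easily derived''.
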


\begin{proof}
	From the discrete EDB~\eqref{eq:discreteEDB} and slope estimate~\eqref{eq:slopeEstimate} we know
	\begin{equation}
		\frac12\kappa\abs{\pa\calE}_{\calD_{u_\kappa^n}}^2(\boldu_\kappa^{n+1})+\frac12\int_{n\kappa}^{(n+1)\kappa}\abs{\pa\calE}_{\calD_{u_\kappa^n}}^2(\wtbfu_\kappa(t))\dd t\leq\calE(\boldu_\kappa^n)-\calE(\boldu_\kappa^{n+1}).
	\end{equation}
	Inserting~\eqref{eq:PcdiscreteFluxSlopeEstimate} and~\eqref{eq:discreteFluxSlopeEstimate} yields the claim.
\end{proof}

\section{Proof of Theorem \ref{th:mainTheorem}}\label{sec:mainTheorem}

In this section, we pass to the limit in the time-discrete scheme and prove Theorem \ref{th:mainTheorem}.
We first extract convergent subsequences, then verify the weak equations and energy-dissipation inequalities.

\begin{proposition}\label{pr:convergentSubsequences}
	For every sequence $(\kappaell)_{\ell\in\N}\downarrow0$, there exists a subsequence (non-relabelled) and a quadruple $(u,z,\olj,\wtj)$, such that the interpolants from Theorem~\ref{th:discreteSolutions} converge to $u$, $z$, $\olj$ and $\wtj$ in the following senses:
	\begin{subequations}\label{eq:convergences}
		\begin{alignat}{2}
			\olu_\kappaell,\wtu_{\kappa_\ell}&\weakto u&&\quad\text{in }\LL^2(I;\HH^2(\Om)),\label{eq:convergenceL2H2oluwtu}\\
			\olu_\kappaell,\ulu_\kappaell,\wtu_\kappaell&\to u&&\quad\text{in }\LL^2(I;\HH^1(\Om)),\label{eq:convergenceL2H1u}\\
			\olu_\kappaell,\ulu_\kappaell,\wtu_\kappaell&\weakto u&&\quad\text{in }\HH^1(\Om)\text{ pointwise }\forall t\in[0,\infty),\label{eq:convergencePtH1u}\\
			\olz_\kappaell,\ulz_\kappaell,\whz_\kappaell,\wtz_\kappaell&\starto z&&\quad\text{in }\LL^\infty([0,\infty);\LL^2(\Om)),\label{eq:convergenceLIL2z}\\
			\olz_\kappaell,\ulz_\kappaell,\whz_\kappaell,\wtz_\kappaell&\weakto z&&\quad\text{in }\LL^2(\Om)\text{ uniformly on }I,\label{eq:convergenceCwIL2z}\\
			\pa_t\whz_\kappaell&\weakto\pa_tz&&\quad\text{in }\LL^2([0,\infty);\LL^2(\Om)),\label{eq:convergenceL2L2dotz}\\
			\olj_\kappaell,\wtj_\kappaell&\weakto\olj,\wtj&&\quad\text{in }\LL^2(I;\LL^{4/3}(\Om;\R^d))\label{eq:convergenceL2L43Fluxes}
		\end{alignat}
	\end{subequations}
	on every compact interval $I\subset[0,\infty)$.
\end{proposition}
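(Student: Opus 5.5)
The proof combines the uniform bounds of Lemma~\ref{le:aPrioriEasy}, Lemma~\ref{le:aPrioriH2}, Corollary~\ref{co:aPrioriH2DeGiorgi} and \eqref{eq:aPrioriL2L43boldj} with a refined Arzel\`a--Ascoli argument in the transport metric and weak compactness. We use a diagonal argument over intervals $I=[0,T]$, $T\in\N$.

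\textbf{Step 1: the $u$-interpolants.} We apply the refined Arzel\`a--Ascoli theorem \cite[Proposition~3.3.1]{AmbrosioGigliSavare2008} to $\olu_\kappaell$ in $(\calX_\frakm^M,\calW_m)$. Compactness comes from Lemma~\ref{le:metricProperties}~\ref{it:metricPropertySublevels}, since the energies are bounded by $\calE(\boldu^0)$. Asymptotic equicontinuity comes from \eqref{eq:aPrioriCauchyWu}. This gives a subsequence and a $\calW_m$-continuous limit $u$ with $\olu_\kappaell(t)\to u(t)$ in $\calW_m$ for every $t$. By \eqref{eq:aPrioriInterpolantsU}, the same limit holds for $\ulu_\kappaell$ and $\wtu_\kappaell$.

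By Lemma~\ref{le:metricProperties}~\ref{it:metricPropertyConvergence}, this convergence is weak-$*$ in measures. Together with the uniform $\HH^1$-bound \eqref{eq:aPrioriLIH1u}, every subsequence has an $\HH^1$-weakly convergent further subsequence, and its limit is identified as $u(t)$. This yields \eqref{eq:convergencePtH1u} for the whole subsequence at every $t$. Rellich gives strong $\LL^2(\Om)$ convergence pointwise in $t$.

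\textbf{Step 2: weak and strong space--time limits of $u$.} The bounds \eqref{eq:aPrioriL2H2olu&ulu} and \eqref{eq:aPrioriL2H2wtu} give weak limits in $\LL^2(I;\HH^2(\Om))$. Testing against $\LL^2(I;\LL^2)$ and using the pointwise convergence identifies both limits as $u$, which is \eqref{eq:convergenceL2H2oluwtu}. The $\ulu_\kappaell$-bound only holds on $[\kappa,T]$, which is why it is excluded there.

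For \eqref{eq:convergenceL2H1u}, we interpolate:
\[\norm{\nabla v}_{\LL^2}^2\le\norm{v}_{\LL^2}\norm{\Delta v}_{\LL^2}\]
for $v$ with Neumann data and zero mean, applied to $v=\olu_\kappaell-u$. Since $\norm{v(t)}_{\LL^2}\to0$ for every $t$ and is bounded, dominated convergence gives $\LL^2(I;\LL^2)$ convergence. Cauchy--Schwarz in time together with the $\LL^2(I;\HH^2)$ bounds then gives strong $\LL^2(I;\HH^1)$ convergence for $\olu_\kappaell$ and $\wtu_\kappaell$.

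For $\ulu_\kappaell$, the missing $\HH^2$-bound on $[0,\kappa]$ is harmless. On that interval $\ulu_\kappaell=u^0$, the interval shrinks, and $u^0\in\HH^1$ is fixed. The remainder equals $\olu_\kappaell(\cdot-\kappa)$, which converges because translations are continuous in $\LL^2(I;\HH^1)$ for the limit $u$.

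\textbf{Step 3: the $z$-interpolants.} The affine interpolant $\whz_\kappaell$ is bounded in $\LL^\infty(\LL^2)$ and its time derivative is bounded in $\LL^2(\LL^2)$ by \eqref{eq:aPrioriLIL2z} and \eqref{eq:aPrioriL2L2dotz}. It is therefore equi-H\"older with exponent $1/2$ into $\LL^2(\Om)$. Arzel\`a--Ascoli in the weak topology on the ball $\overline B_R^{\LL^2}$, which is metrizable and compact, gives uniform weak convergence on $I$ to some $z$. The other $z$-interpolants differ from $\whz_\kappaell$ by $O(\sqrt\kappa)$ in norm by \eqref{eq:aPrioriInterpolantsZ}, so they converge uniformly weakly to the same $z$; this is \eqref{eq:convergenceCwIL2z}.

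Weak-$*$ compactness then gives \eqref{eq:convergenceLIL2z} and \eqref{eq:convergenceL2L2dotz}. The limits are identified via distributional derivatives, and $z$ lies in the class \eqref{eq:regularityZ}.

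\textbf{Step 4: fluxes.} The bound \eqref{eq:aPrioriL2L43boldj} and reflexivity of $\LL^2(I;\LL^{4/3})$ give weak limits $\olj,\wtj$, which is \eqref{eq:convergenceL2L43Fluxes}.

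\textbf{Main obstacle.} The most delicate point is Step~1: using compactness of energy sublevels in $\calW_m$ when $M=+\infty$, and upgrading pointwise-in-$t$ convergence so that it holds for all $t$ simultaneously rather than almost everywhere. The refined Ascoli argument handles this, since the modulus in \eqref{eq:aPrioriCauchyWu} degenerates only as $\kappa\to0$. A second point is that the $\HH^2$ compactness of $\wtu$ must come from Corollary~\ref{co:aPrioriH2DeGiorgi} rather than from telescoping.
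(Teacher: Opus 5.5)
Your proposal is correct and follows essentially the same route as the paper. It uses the refined Ascoli--Arzel\`a theorem in $\calW_m$ with compact energy sublevels, upgrades to pointwise weak $\HH^1$ convergence and strong $\LL^2(I;\HH^1)$ convergence via the uniform $\HH^1$/$\HH^2$ bounds and the interpolation inequality, applies a weak Ascoli argument to $\whz_\kappa$ combined with \eqref{eq:aPrioriInterpolantsZ}, and concludes with weak compactness for the fluxes. The one small deviation is on $I\setminus[0,\kappa]$, where you handle $\ulu_\kappa$ through the shift identity $\ulu_\kappa=\olu_\kappa(\cdot-\kappa)$ and continuity of translations, while the paper reapplies the interpolation argument; both work.
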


\begin{proof}
	The approximate equi-continuity~\eqref{eq:aPrioriCauchyWu} from Lemma~\ref{le:aPrioriEasy} yields for every $s,t\geq0$
	\[\limsup_{\kappa\to0}\calW_m(\olu_\kappa(s),\olu_\kappa(t))\leq\sqrt{2(\calE(\boldu^0)-\calEmin)}\sqrt{\abs{t-s}}.\]
	Moreover, by~\eqref{eq:aPrioriLIH1u}, all $\olu_\kappaell(t)$ stay inside one sublevel set of $\calE$. The sublevel sets of $\calE$ are compact in $(\calX_\frakm^M,\calW_m)$ by Lemma~\ref{le:metricProperties}~\ref{it:metricPropertySublevels}. Thus, a refined version of Ascoli--Arzel{\`a}~\cite[Proposition 3.3.1]{AmbrosioGigliSavare2008} provides a curve $u:[0,\infty)\to\calX_\frakm^M$, continuous with respect to $\calW_m$, such that, along a subsequence that we do not relabel, $\calW_m(\olu_\kappaell(t),u(t))\to0$ as $\ell\to\infty$ for every $t\in[0,\infty)$.
	The regularity and convergence improve by the a priori estimate~\eqref{eq:aPrioriLIH1u}, i.e.\ we have $\olu_\kappaell(t)\weakto u(t)$ in $\HH^1(\Om)$ for every $t\in[0,\infty)$ and $\olu_\kappaell\starto u$ in $\LL^\infty([0,\infty);\HH^1(\Om))$, and hence $u\in\CC_w([0,\infty);\calM_\frakm^+(\varcl{\Om}))\cap\LL^\infty([0,\infty);\HH^1(\Om))\hookrightarrow\CC_{w,b}([0,\infty);\HH^1(\Om))$. Invoking~\eqref{eq:aPrioriInterpolantsU}, the compact embedding $\HH^1(\Om)\overset{c}{\hookrightarrow}\LL^2(\Om)$ and Lebesgue's convergence theorem, we further obtain
	\begin{alignat*}{2}
		\olu_{\kappa_n},\ulu_{\kappa_n},\wtu_{\kappa_n}&\starto u&&\quad\text{in }\LL^\infty([0,\infty);\HH^1(\Om)),\\
		\olu_{\kappa_n}(t),\ulu_{\kappa_n}(t),\wtu_{\kappa_n}(t)&\weakto u(t)&&\quad\text{in }\HH^1(\Om),\quad\forall t\in[0,\infty),\\
		\olu_\kappaell,\ulu_\kappaell,\wtu_\kappaell&\to u&&\quad\text{in }\LL^2(I;\LL^2(\Om))
	\end{alignat*}
	for every compact time interval $I\subset[0,\infty)$. Taking into account the uniform $\LL^2(I;\HH^2(\Om))$-estimates~\eqref{eq:aPrioriL2H2olu&ulu} and~\eqref{eq:aPrioriL2H2wtu}, we deduce $u\in\Lloc^2([0,\infty);\HH^2(\Om))$ and~\eqref{eq:convergenceL2H2oluwtu}.
	We combine this with the strong $\LL^2(I;\LL^2(\Om))$-convergence and the interpolation Lemma~\ref{le:H1L2H2Interpolation}, which immediately yields~\eqref{eq:convergenceL2H1u} for $\olu_\kappaell$ and $\wtu_\kappaell$. The strong convergence for $\ulu_\kappaell$ follows by decomposing
	\[\norm{\ulu_\kappaell-u}_{\LL^2(I;\HH^1(\Om))}\leq\norm{\ulu_\kappaell-u}_{\LL^2([0,\kappaell]\cap I;\HH^1(\Om))}+\norm{\ulu_\kappaell-u}_{\LL^2(I\setminus[0,\kappaell];\HH^1(\Om))}.\]
	Then, \eqref{eq:aPrioriLIH1u} ensures $\norm{\ulu_\kappaell-u}_{\LL^2([0,\kappaell]\cap I;\HH^1(\Om))}\leq\sqrt{\kappaell}C_2\to0$, and the other term converges to zero by interpolation as previously.

	The existence of $z$, $\olj$ and $\wtj$ with the asserted convergences~\eqref{eq:convergenceLIL2z}, \eqref{eq:convergenceL2L2dotz} and~\eqref{eq:convergenceL2L43Fluxes} follow readily after passing, if necessary, to a further subsequence, from the a priori estimates \eqref{eq:aPrioriLIL2z}, \eqref{eq:aPrioriL2L2dotz}, \eqref{eq:aPrioriInterpolantsZ} and \eqref{eq:aPrioriL2L43boldj}. The last convergence, \eqref{eq:convergenceCwIL2z}, can be derived from
	\[\norm{\whz_\kappa(t)-\whz_\kappa(s)}_{\LL^2(\Om)}\leq\int_{s\wedge t}^{s\vee t}\norm{\dot\whz_\kappa(r)}_{\LL^2(\Om)}\dd r\leq C_2\sqrt{\abs{t-s}},\]
	where we used \eqref{eq:aPrioriL2L2dotz}. Thus, since $\norm{\whz_\kappa(t)}_{\LL^2(\Om)}\leq C_2$ uniformly in $\kappa$ and $t$, the weak version of Ascoli--Arzel\`a (Proposition~\ref{pr:ascoliWeakVersion}) and \eqref{eq:aPrioriInterpolantsZ} yields \eqref{eq:convergenceCwIL2z}.
\end{proof}

\begin{corollary}\label{co:StrongL2L2ConvergenceU}
	In the situation of Proposition~\ref{pr:convergentSubsequences}: For $\ell\to\infty$ we have
	\begin{subequations}\label{eq:NonlinearConvergencesU}
		\begin{alignat}{2}
			K(\olu_\kappaell)&\to K(u)&&\quad\text{in }\LL^2(I;\LL^2(\Om)),\label{eq:convergenceL2H1Ku}\\
			P(\olu_\kappaell)&\to P(u)&&\quad\text{in }\LL^1(I;\LL^1(\Om)),\label{eq:convergenceL1L1Pu}\\
			m(\olu_\kappaell)&\to m(u)&&\quad\text{in }\LL^2(I;\HH^1(\Om)).\label{eq:convergenceL2H1mu}
		\end{alignat}
	\end{subequations}
\end{corollary}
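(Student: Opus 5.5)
The three convergences all start from the strong convergence $\olu_\kappaell\to u$ in $\LL^2(I;\HH^1(\Om))$ from~\eqref{eq:convergenceL2H1u}. Along a further subsequence this also gives $\olu_\kappaell\to u$ and $\nabla\olu_\kappaell\to\nabla u$ almost everywhere in $I\times\Om$. Since we only need convergence of the full sequence in norm, the usual subsequence-of-subsequence argument lets us work along such subsequences and conclude for the whole sequence.

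\emph{Step 1: the term $K$.} For~\eqref{eq:convergenceL2H1Ku} we use Assumption~\ref{as:coefficients}: $K$ is Lipschitz with constant $\Amax$. Hence
\[\norm{K(\olu_\kappaell)-K(u)}_{\LL^2(I;\LL^2(\Om))}\le\Amax\norm{\olu_\kappaell-u}_{\LL^2(I;\LL^2(\Om))}\to0.\]

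\emph{Step 2: the term $m$.} For~\eqref{eq:convergenceL2H1mu}, the Lipschitz bound $\norm{m'}_\infty<\infty$ from~\eqref{eq:mobilityLipschitzConditions} gives $m(\olu_\kappaell)\to m(u)$ in $\LL^2$ in the same way. For the gradients we write the chain rule $\nabla m(\olu_\kappaell)=m'(\olu_\kappaell)\nabla\olu_\kappaell$. This requires care because $m$ is only $\CC^2$ on the open interval $(0,M)$, and $m'$ need not extend continuously to the endpoints. However, $m$ is Lipschitz, so the Sobolev chain rule holds with $\nabla m(v)=m'(v)\nabla v$ a.e., where the right-hand side is set to $0$ on the set $\{\nabla v=0\}$. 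In particular it vanishes a.e. on the level sets $\{v\in\{0,M\}\}$.

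We then split
\[m'(\olu_\kappaell)\nabla\olu_\kappaell-m'(u)\nabla u=m'(\olu_\kappaell)(\nabla\olu_\kappaell-\nabla u)+\bigparen{m'(\olu_\kappaell)-m'(u)}\nabla u.\]
The first term is bounded by $\norm{m'}_\infty\abs{\nabla\olu_\kappaell-\nabla u}$ and tends to zero in $\LL^2$. For the second term we use dominated convergence with majorant $2\norm{m'}_\infty\abs{\nabla u}$. On the set $\{0<u<M\}$, pointwise convergence follows from continuity of $m'$ on $(0,M)$. On the complement, $\nabla u=0$ a.e., so the second term vanishes there.

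The main obstacle is exactly this endpoint issue. An alternative I would keep in reserve is to note that $m(\olu_\kappaell)$ is bounded in $\LL^2(I;\HH^2(\Om))$ away from degeneracy, but the dominated-convergence argument suffices.

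\emph{Step 3: the term $P$.} For~\eqref{eq:convergenceL1L1Pu}, $P\in\CC^0(\rangeM)$ gives $P(\olu_\kappaell)\to P(u)$ almost everywhere. If $M<\infty$, then $P$ is bounded on $[0,M]$ and dominated convergence applies directly.

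If $M=+\infty$, we use Vitali's theorem. The required uniform integrability is obtained as in Step~3 of Lemma~\ref{le:freeEnergyAlongPotentialFlow}. By~\eqref{eq:asymptoticBehaviorP}, for every $\ve>0$ we have $\abs{P(r)}\le\ve(r^\omega+\abs{F(r)})$ once $r\ge C_\ve$. It therefore suffices to have a uniform bound on $\olu_\kappaell$ in $\LL^\omega(I\times\Om)$, which follows from~\eqref{eq:interpolationExponentq} together with the bounds~\eqref{eq:aPrioriLIH1u} and~\eqref{eq:aPrioriL2H2olu&ulu}. We also need a uniform bound on $F(\olu_\kappaell)$ in $\LL^1(I\times\Om)$, which follows from~\eqref{eq:potentialUpperBound}, the restriction on $\gamma$, and the embedding $\HH^2\hookrightarrow\LL^\gamma$ combined with the same bounds. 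On the set where $\olu_\kappaell<C_\ve$, $P$ is bounded, so small sets carry small integrals there. This yields uniform integrability, and Vitali's theorem gives the convergence.
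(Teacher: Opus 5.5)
Your Steps 1 and 2 and the case $M<+\infty$ of Step 3 are correct and match the paper. For the mobility you supply the chain-rule and dominated-convergence argument that the paper compresses into ``$m$ is Lipschitz''. The endpoint worry is in fact moot: $m'$ is non-increasing by concavity and bounded, so it has finite one-sided limits at $0$ and $M$.

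The gap is in the case $M=+\infty$ of Step 3, in the uniform bound for $F(\olu_\kappaell)$ in $\LL^1(I\times\Om)$. You derive it from \eqref{eq:potentialUpperBound} and $\HH^2(\Om)\hookrightarrow\LL^\gamma(\Om)$. That embedding only gives $\int_\Om\abs{\olu_\kappaell(t)}^\gamma\dd x\lesssim\norm{\olu_\kappaell(t)}_{\HH^2(\Om)}^\gamma$ pointwise in time. The bounds \eqref{eq:aPrioriLIH1u} and \eqref{eq:aPrioriL2H2olu&ulu} control only $\LL^\infty(I;\HH^1(\Om))\cap\LL^2(I;\HH^2(\Om))$. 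For $d\ge3$ this space embeds into $\LL^\gamma(I\times\Om)$ only up to the parabolic exponent $\gamma\le 2\frac{d+2}{d-2}$, which is exactly why $\omega$ is restricted in \eqref{eq:interpolationExponentq}. Assumption~\ref{as:potential}, however, allows arbitrary $\gamma$ for $d\le 4$ and $\gamma$ up to $\frac{2d}{d-4}$ for $d\ge5$. So for $d\ge3$ and large $\gamma$ your justification fails.

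The static argument in Step~3 of Lemma~\ref{le:freeEnergyAlongPotentialFlow} works only because a single function $v_s\in\HH^2(\Om)$ is fixed there. It does not transfer to the space--time setting.

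The missing ingredient is the energy, which is how the paper closes the uniform integrability:
\begin{enumerate}
	\item By \eqref{eq:potentialDecompositionBounds}, $\abs{F(r)}\le\Fconv(r)+C(1+r^2)$.
	\item Since $\omega>2$, \eqref{eq:asymptoticBehaviorP} then gives $\abs{P(r)}\le\ve\bigl(1+r^\omega+\Fconv(r)\bigr)$ for all $r\ge C_\ve$.
	\item By \eqref{eq:freeEnergyLowerBound} and \eqref{eq:TIMTelescopeEstimate}, $\int_\Om\Fconv(\olu_\kappa(t))\dd x\le\calE(\boldu^0)+C_\calE$, uniformly in $t$ and $\kappa$.
\end{enumerate}
With this replacement, the $\LL^\omega(I\times\Om)$-bound from \eqref{eq:interpolationExponentq} suffices, and your Vitali argument goes through.
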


\begin{proof}
	Statements \eqref{eq:convergenceL2H1Ku} and \eqref{eq:convergenceL2H1mu} follow directly from \eqref{eq:convergenceL2H1u} and the fact that $K$ and $m$ are Lipschitz continuous.
	Moreover, \eqref{eq:convergenceL1L1Pu} is clear when $M<+\infty$ since $P$ is supposed to be continuous on $[0,M]$.
	In case $M=+\infty$, \eqref{eq:convergenceL1L1Pu} is obtained via Vitali's convergence theorem, similarly as in Step~3 of the proof of Lemma~\ref{le:freeEnergyAlongPotentialFlow}. More precisely, \eqref{eq:interpolationExponentq}, \eqref{eq:aPrioriLIH1u} and~\eqref{eq:aPrioriL2H2olu&ulu} ensure that $(P(u_\kappaell))_{\ell\in\N}$ is uniformly bounded in $\LL^1(I\times\Om)$. Furthermore, by \eqref{eq:asymptoticBehaviorP} and \eqref{eq:potentialDecompositionBounds}, for every $\ve>0$ there exists a constant $C_\ve\in(0,\infty)$ such that $\abs{P(s)}\leq\ve(1+s^\omega+\Fconv(s))$ for every $s\in[C_\ve,+\infty)$. Consequently, if $K\subset I\times\Om$ satisfies $\Leb^{d+1}(K)\leq\frac{\ve}{\norm{P}_{\CC([0,C_\ve])}}$, then
	\begin{align*}
		\int_K\abs{P(u_\kappaell)}\dd(t,x)&=\int_{\{u_\kappaell<C_\ve\}\cap K}\abs{P(u_\kappaell)}\dd(t,x)+\int_{\{u_\kappaell\geq C_\ve\}\cap K}\abs{P(u_\kappaell)}\dd(t,x)\\
		&\leq\ve+\ve\bigparen{\Leb^{d+1}(I\times\Om)+\norm{u_\kappaell}_{\LL^\omega(I\times\Om)}^\omega+\abs{I}(\calE(\boldu^0)-C_\calE)},
	\end{align*}
	showing that $(P(u_\kappaell))_{\ell\in\N}$ has uniformly absolutely continuous integrals.
\end{proof}

With these convergence results in hand, we now pass to the limit in the discrete equations and dissipation inequalities.

\noindent
\emph{Proof of Theorem~\ref{th:mainTheorem}.}
To keep notation simple, we continue writing $\kappa$ instead of $\kappaell$.\\

\noindent
\textbf{Step 1:} Equations \eqref{eq:equationU}, \eqref{eq:fluxCondition} and \eqref{eq:equationZ}.\\\noindent
We first pass to the limit in \eqref{eq:approxEquationU}. To this end, we fix $\psi\in\CC_c^1((0,\infty))$ and $V\in\CC^2(\varcl{\Om})$ with $\nabla V\cdot\n=0$ on $\boundary{\Om}$. Let $I\subset[0,\infty)$ be a compact interval such that $\supp(\psi)\subseteq I$. From \eqref{eq:convergenceL2H1u} it immediately follows
\begin{equation}\label{eq:equationUPrelim1}
	\int_0^\infty\int_\Om\psi'Vu\dd x\dd t=\lim_{\kappa\to0}\int_0^\infty\int_\Om\psi'V\olu_\kappa\dd x\dd t.
\end{equation}
For the right-hand side in~\eqref{eq:approxEquationU}, we use that \eqref{eq:convergenceL2H1mu} implies $\divv(m(\olu_\kappa)\nabla V)\to\divv(m(u)\nabla V)$ in $\LL^2(I;\LL^2(\Om))$, and \eqref{eq:assumptionA}, \eqref{eq:convergenceL2H1u}, \eqref{eq:convergenceL2H1Ku} yield $A(\olu_\kappa)(\olz_\kappa-K(\olu_\kappa))\weakto A(u)(z-K(u))$ in $\LL^2(I;\LL^2(\Om))$. Hence, together with \eqref{eq:convergenceL2H2oluwtu} and \eqref{eq:convergenceL1L1Pu}, we deduce $\boldN[\olbfu_\kappa,\nabla V]\weakto\boldN[\boldu,\nabla V]$ in $\LL^1(I)$. Therefore,
\begin{equation}\label{eq:equationUPrelim2}
	\int_0^\infty\psi\boldN[\boldu,\nabla V]\dd t=\lim_{\kappa\to0}\int_0^\infty\underline{\psi}_\kappa\boldN[\olbfu_\kappa,\nabla V]\dd t
\end{equation}
follows since $\underline{\psi}_\kappa\to\psi$ uniformly on $I$. Using~\eqref{eq:convergenceL2L43Fluxes} and treating $\boldN[\wtbfu_\kappa,\cdot]$ similarly as $\boldN[\olbfu_\kappa,\cdot]$, we pass to the limit in~\eqref{eq:approxFluxConditions} and obtain for every $\boldzeta\in\CC^1(\varcl{\Om};\mathbb R^d)$ with $\boldzeta\cdot\n=0$ on $\boundary{\Om}$, any $\psi\in\CC_c^0((0,\infty))$ 
\begin{equation}
	\int_0^\infty\int_\Om\psi\boldzeta\cdot\boldj\dd x\dd t=\int_0^\infty\psi\boldN[\boldu,\boldzeta]\dd t,
\end{equation}
showing~\eqref{eq:fluxCondition}. Combining this with~\eqref{eq:equationUPrelim1} and~\eqref{eq:equationUPrelim2}, we obtain~\eqref{eq:equationU}.

The ODE~\eqref{eq:equationZ} is derived easily from \eqref{eq:approxEquationZ}, the convergences \eqref{eq:convergenceL2H1u}, \eqref{eq:convergenceLIL2z}, \eqref{eq:convergenceL2L2dotz}, \eqref{eq:convergenceL2H1Ku} and the fact that $\tau$ is continuous and satisfies $\abs{\frac{1}{\tau}}\leq\frac{1}{\taumin}$.

The initial conditions $u(0)=u^0$ and $z(0)=z^0$ follow readily from \eqref{eq:convergencePtH1u} and \eqref{eq:convergenceCwIL2z}, i.e.\ $u^0=\olu_\kappaell(0)\weakto u(0)$ in $\HH^1(\Om)$ and $z^0=\olz_\kappaell(0)\weakto z(0)$ in $\LL^2(\Om)$.\\

\noindent
\textbf{Step 2:} Energy-dissipation inequality \eqref{eq:freeEnergyEDI}.\\\noindent
We pass to the limit in~\eqref{eq:approxFreeEnergyEDI}. Since $\olu_\kappa(t)\weakto u(t)$ in $\HH^1(\Om)$ and $\olz_\kappa(t)\weakto z(t)$ in $\LL^2(\Om)$ by \eqref{eq:convergencePtH1u} and \eqref{eq:convergenceCwIL2z}, the lower semicontinuity of $\calE$ from Lemma~\ref{le:freeEnergyProperties} implies
\[\calE(\boldu(t))\leq\liminf_{\kappa\to0}\calE(\olbfu_\kappa(t))\]
for every $t\in[0,\infty)$. The limit passage in the dissipation terms in~\eqref{eq:approxFreeEnergyEDI} relies on the convergences~\eqref{eq:convergenceL2H1u}, \eqref{eq:convergenceLIL2z}, and~\eqref{eq:convergenceL2L43Fluxes} and weak lower semicontinuity of the $\LL^2((0,t)\times\Om)$-norm.
At this point we emphasise that this argument needs $\frac{\olj_\kappaell}{\sqrt{m(\olu_\kappaell)}},\frac{\wtj_\kappaell}{\sqrt{m(\wtu_\kappaell)}}\weakto\frac{\boldj}{\sqrt{m(u)}}$ in $\LL^2((0,t)\times\Om)$. This convergence comes from \eqref{eq:approxFreeEnergyEDI} and a limit identification argument that uses that \eqref{eq:convergenceL2H1u} implies $\sqrt{m(\olu_\kappaell)},\sqrt{m(\wtu_\kappaell)}\to\sqrt{m(u)}$ in $\LL^4((0,t)\times\Om)$.\\

\noindent
\textbf{Step 3:} Strong convergence of $z$ under Assumption~\ref{as:entropyInequality}.\\\noindent
First, we derive a uniform bound of $\olz_\kappa$ in $\Lloc^\infty([0,\infty);\LL^\infty(\Om))$. Using the discrete ODE~\eqref{eq:approxEquationZ} and rearranging terms, we get for any $\kappa>0$, $k\in\N$
\[z_\kappa^k=\frac{\tau(u_\kappa^{k-1})}{\tau(u_\kappa^{k-1})+\kappa}z_\kappa^{k-1}+\frac{\kappa}{\tau(u_\kappa^{k-1})+\kappa}K(u_\kappa^k).\]
Hence, since $\abs{K(u)}\leq\Amax\abs{u}$, we infer for every $n\in\N$
\begin{align*}
	\norm{z_\kappa^n}_{\LL^\infty(\Om)}&\leq\sum_{k=1}^n\bigparen{\norm{z_\kappa^k}_{\LL^\infty(\Om)}-\norm{z_\kappa^{k-1}}_{\LL^\infty(\Om)}}+\norm{z^0}_{\LL^\infty(\Om)}\\
	&\leq\sum_{k=1}^n\frac{\kappa\Amax}{\taumin}\norm{u_\kappa^k}_{\LL^\infty(\Om)}+\norm{z^0}_{\LL^\infty(\Om)}=\frac{\Amax}{\taumin}\norm{\olu_\kappa}_{\LL^1([0,n\kappa];\LL^\infty(\Om))}+\norm{z^0}_{\LL^\infty(\Om)}.
\end{align*}
By Assumption~\ref{as:entropyInequality}, we either use $\abs{u_\kappa^k}\leq M<+\infty$, or, if $M=+\infty$, the Sobolev embedding $\HH^2(\Om)\hookrightarrow\LL^\infty(\Om)$ in $d\in\{1,2,3\}$. In any case, we have
\[\norm{\olz_\kappa}_{\LL^\infty([0,T];\LL^\infty(\Om))}\leq C(T)\]
for a constant $C(T)\in(0,\infty)$ depending on $T$ but not on $\kappa$. Next, we let $\kappa$ and $\sigma$ be two time steps. Exploiting the Lipschitz continuity of $\tau$ and $K/\tau$, we have on any compact interval $I\subset[0,\infty)$
\begin{align*}
	\frac12\ddd{t}\norm{\whz_\kappa-\whz_\sigma}_{\LL^2(\Om)}^2&=-\int_\Om\Bigparen{\frac{\olz_\kappa-K(\olu_\kappa)}{\tau(\ulu_\kappa)}-\frac{\olz_\sigma-K(\olu_\sigma)}{\tau(\ulu_\sigma)}}(\whz_\kappa-\whz_\sigma)\dd x\\
	&\leq-\int_\Om\frac{1}{\tau(\ulu_\kappa)}\Bigparen{(\olz_\kappa-\whz_\kappa)+(\whz_\kappa-\whz_\sigma)+(\whz_\sigma-\olz_\sigma)}(\whz_\kappa-\whz_\sigma)\dd x\\
	&\quad-\int_\Om\biggbraces{\olz_\sigma\Bigparen{\frac{1}{\tau(\ulu_\kappa)}-\frac{1}{\tau(\ulu_\sigma)}}-\Bigparen{\frac{K(\ulu_\kappa)}{\tau(\ulu_\kappa)}-\frac{K(\ulu_\sigma)}{\tau(\ulu_\sigma)}}}(\whz_\kappa-\whz_\sigma)\dd x\\
	&\quad+\int_\Om\biggbraces{\frac{K(\olu_\kappa)-K(\ulu_\kappa)}{\tau(\ulu_\kappa)}-\frac{K(\olu_\sigma)-K(\ulu_\sigma)}{\tau(\ulu_\sigma)}}(\whz_\kappa-\whz_\sigma)\dd x\\
	&\leq-\frac{1}{\taumax}\norm{\whz_\kappa-\whz_\sigma}_{\LL^2(\Om)}^2+\frac{C_2(\sqrt{\kappa}+\sqrt{\sigma})}{\taumin}\norm{\whz_\kappa-\whz_\sigma}_{\LL^2(\Om)}\\
	&\quad+\bigparen{\norm{\olz_\sigma}_{\LL^\infty(\Om)}\mathrm{Lip}(\tau^{-1})+\mathrm{Lip}(K/\tau)}\norm{\ulu_\kappa-\ulu_\sigma}_{\LL^2(\Om)}\norm{\whz_\kappa-\whz_\sigma}_{\LL^2(\Om)}\\
	&\quad+\frac{\Amax}{\taumin}\bigparen{\norm{\olu_\kappa-\ulu_\kappa}_{\LL^2(\Om)}+\norm{\olu_\sigma-\ulu_\sigma}_{\LL^2(\Om)}}\norm{\whz_\kappa-\whz_\sigma}_{\LL^2(\Om)}\\
	&\leq C_{10}(\kappa+\sigma+R_{\kappa,\sigma}(t)),
\end{align*}
where $R_{\kappa,\sigma}(t)=\norm{\ulu_\kappa(t)-\ulu_\sigma(t)}_{\LL^2(\Om)}^2+\norm{\olu_\kappa(t)-\ulu_\kappa(t)}_{\LL^2(\Om)}^2+\norm{\olu_\sigma(t)-\ulu_\sigma(t)}_{\LL^2(\Om)}^2$ and $C_{10}=C_{10}(I)\in(0,\infty)$ is independent of $\kappa$, $\sigma$ and $t$, but it may depend on $I$. Applying Gronwall's inequality yields
\[\sup_{t\in I}\norm{\whz_\kappaell(t)-\whz_{\kappa_n}(t)}_{\LL^2(\Om)}^2\leq C_{10}\int_I\bigparen{\kappaell+\kappa_n+R_{\kappaell,\kappa_n}(t)}\dd t\to0\]
for $\ell,n\to\infty$, where $\int_IR_{\kappaell,\kappa_n}(t)\dd t\to0$ due to the strong convergence~\eqref{eq:convergenceL2H1u}.\\

\noindent
\textbf{Step 4:} Entropy-dissipation inequality~\eqref{eq:internalEnergyEDI}.\\\noindent
We want to pass to the limit in~\eqref{eq:approxInternalEnergyEDI}. By~\eqref{eq:convergencePtH1u}, the lower semicontinuity of $\calU$ yields $\calU(u(t))\leq\liminf_{\kappa\to0}\calU(\olu_\kappa(t))$. We treat the three dissipation terms separately.
For the Laplacian term, we can simply use $\Delta\olu_\kappa\weakto\Delta u$ in $\LL^2([0,t];\LL^2(\Om))$ and weak lower semicontinuity of the norm.
For the coupling term, we additionally invoke the strong convergence $A(\olu_\kappa)(\olz_\kappa-K(\olu_\kappa))\to A(u)(z-K(u))$ in $\LL^2([0,t];\LL^2(\Om))$.
For the potential term, we follow Step~4 of the proof of Lemma~\ref{le:freeEnergyAlongHeatFlow}, now on the time-space domain $(0,t)\times\Om$. This means, we decompose $F''=\Fconv''+\Fconc''$, and the convex part is treated using Fatou's lemma, while the concave part is handled by Vitali's convergence theorem. Note that, in the static argument, the $\HH^2(\Om)$-bound gives $\nabla\olu_\kappa$ a spatial integrability exponent strictly larger than 2. In the time-space setting, the corresponding uniform bound follows instead by interpolation~\eqref{eq:interpolationExponentq}, which ensures $\nabla\olu_\kappa\in\LL^\omega((0,t)\times\Om)$ with $\omega>2$.\\

The proof of Theorem~\ref{th:mainTheorem} is finished.\hfill$\Box$

\section*{Conclusion}

We constructed solutions for a degenerate VPS system using minimising movement schemes based on the product $\calW_m\times\LL^2$-topology.
By construction, the solutions preserve the structural properties of the system already at the level of the time-discrete approximants, including the physical bounds on the phase-field variable and the sharp energy-dissipation inequality in terms of the PDE fluxes.
The proof of the EDI is based on the De\,Giorgi interpolant.
The flux interpolants are obtained as Riesz representatives using an operator bound in terms of the metric slope.
A coercivity estimate for the metric slope is established to derive a uniform $\LL^2(0,T;\HH^2(\Om))$-bound for the De\,Giorgi interpolant of the phase-field variable, enabling the limit passage when the time step tends to zero.

Under stronger data assumptions, we also established the entropy-dissipation inequality, providing a promising starting point for future work on weak--strong uniqueness via relative entropy methods.

Finally, we note that for regularity and the EDI, flow interchange can in principle be avoided using slope coercivity estimates for both interpolants. The present analysis, however, still relies on flow interchange in order to derive a weak formulation of the equation. As in~\cite{LisiniMatthesSavare2012}, to ensure the EVI$_\lambda$ property of the regularised potential
flow, we had to restrict to mobilities that degenerate linearly at the endpoints.
It would therefore be interesting to find a method that allows to infer the weak formulation directly from the variational scheme without relying on flow interchange.

\appendix

\section{Appendix}

\subsection{Heat equation and nonlinear drift-diffusion equation}

The following two well-posedness results can be shown with standard parabolic theory.

\begin{proposition}
	Assume $\Om\subseteq\R^d$ is a bounded domain, convex or with $\CC^{1,1}$-boundary, and $v^0\in\HH^1(\Om)$. Then the heat equation with homogeneous Neumann boundary conditions~\eqref{eq:heatEquation} admits a unique strong solution
	\[v\in\Hloc^1([0,\infty);\LL^2(\Om))\cap\CC([0,\infty);\HH^1(\Om))\cap\Lloc^2([0,\infty);\HH^2(\Om)).\]
	In particular, $v$ satisfies the following two energy-dissipation identities for every $t\in[0,\infty)$
	\begin{align}
		\frac12\int_\Om\abs{v(t)}^2\dd x+\int_0^t\int_\Om\abs{\nabla v(s)}^2\dd x\dd s&=\frac12\int_\Om\abs{v^0}^2\dd x,\label{eq:L2NormDissipationOfHeatEquation}\\
		\frac12\int_\Om\abs{\nabla v(t)}^2\dd x+\int_0^t\int_\Om\abs{\Delta v(s)}^2\dd x\dd s&=\frac12\int_\Om\abs{\nabla v^0}^2\dd x.\label{eq:DirichletEnergyDissipationOfHeatEquation}
	\end{align}
\end{proposition}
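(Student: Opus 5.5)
We argue by standard Galerkin/semigroup theory for the Neumann Laplacian, which is self-adjoint and non-positive on $\LL^2(\Om)$ with form domain $\HH^1(\Om)$. The plan is to expand $v^0$ in an orthonormal basis $(e_k)_{k}$ of Neumann eigenfunctions, $-\Delta e_k=\lambda_k e_k$ with $0=\lambda_0\le\lambda_1\le\dots$. We then define
\[v(t):=\sum_k e^{-\lambda_k t}(v^0,e_k)_{\LL^2(\Om)}\,e_k .\]
Since $v^0\in\HH^1(\Om)$, we have $\sum_k(1+\lambda_k)\abs{(v^0,e_k)}^2\simeq\norm{v^0}_{\HH^1(\Om)}^2<\infty$. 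It follows directly that $v\in\CC([0,\infty);\HH^1(\Om))$, by dominated convergence in the series. Moreover,
\[\int_0^T\sum_k\lambda_k^2e^{-2\lambda_kt}\abs{(v^0,e_k)}^2\dd t\le\tfrac12\sum_k\lambda_k\abs{(v^0,e_k)}^2<\infty,\]
so that $\Delta v\in\LL^2(0,T;\LL^2(\Om))$ and $\pa_tv=\Delta v\in\LL^2(0,T;\LL^2(\Om))$.

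To upgrade $\Delta v\in\LL^2$ to $v\in\HH^2(\Om)$ with $\nabla v\cdot\n=0$, we invoke elliptic $\HH^2$-regularity for the Neumann problem. On $\CC^{1,1}$ domains this is classical (Grisvard). On convex domains it follows from Grisvard's estimate $\norm{v}_{\HH^2}\lesssim\norm{\Delta v}_{\LL^2}+\norm{v}_{\LL^2}$ for Neumann data, which is the same estimate as \eqref{eq:H2DeltaEstimate} used throughout. This regularity step is the only genuinely nontrivial ingredient, and we simply cite it.

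For uniqueness, we take the difference $w$ of two strong solutions. It satisfies $\frac{\dd}{\dd t}\frac12\norm{w}_{\LL^2}^2=-\norm{\nabla w}_{\LL^2}^2\le0$ with $w(0)=0$. This computation is justified by integration by parts with the Neumann condition and by the absolute continuity of $t\mapsto\norm{w}^2_{\LL^2}$ for $w\in\HH^1(0,T;\LL^2)$. Hence $w\equiv0$.

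It remains to establish the identities. For \eqref{eq:L2NormDissipationOfHeatEquation}, we test the equation with $v$: since $v\in\HH^1(0,T;\LL^2)\cap\LL^2(0,T;\HH^1)$, the Lions–Magenes lemma gives $\frac{\dd}{\dd t}\frac12\norm{v}^2=(\Delta v,v)=-\norm{\nabla v}^2$, using $\nabla v\cdot\n=0$, and we integrate in time. For \eqref{eq:DirichletEnergyDissipationOfHeatEquation}, we test with $-\Delta v=-\pa_tv$. Here the chain rule $\frac{\dd}{\dd t}\frac12\norm{\nabla v}^2=(\nabla v,\nabla\pa_tv)=-(\Delta v,\pa_tv)=-\norm{\Delta v}^2$ holds rigorously by Lions–Magenes in the triple $\HH^2_\n\subset\HH^1\subset\LL^2$. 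Alternatively, both identities can be read off termwise from the eigen-expansion via Parseval, e.g.
\[\tfrac12\sum_k\lambda_k\abs{(v^0,e_k)}^2\bigl(1-e^{-2\lambda_kt}\bigr)=\int_0^t\sum_k\lambda_k^2e^{-2\lambda_ks}\abs{(v^0,e_k)}^2\dd s,\]
which avoids any chain-rule subtlety. We expect the $\HH^2$ elliptic regularity on merely convex domains to be the main point requiring care; everything else is routine spectral calculus.
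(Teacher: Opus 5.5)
Your proof is correct, and it is the kind of standard parabolic argument (spectral calculus for the Neumann Laplacian plus Grisvard's Neumann $\HH^2$-regularity) that the paper invokes without giving details, so there is no different route to compare. Two small precisions: on convex domains you need Grisvard's $\HH^2$-\emph{regularity} theorem for the Neumann problem, not merely the a priori bound \eqref{eq:H2DeltaEstimate}, and the correct Lions--Magenes triple for the Dirichlet-energy identity is $\HHn^2(\Om)\subset\HH^1(\Om)\subset(\HHn^2(\Om))^*$, with $\LL^2$ embedded via $f\mapsto(f,(1-\Delta)\,\cdot\,)_{\LL^2}$; in any case your termwise Parseval computation already proves both identities rigorously.
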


\begin{proposition}\label{pr:driftDiffusion}
	Assume $\Om\subseteq\R^d$ is a bounded domain, convex or with $\CC^{1,1}$-boundary, $\boldzeta\in\WW^{1,\infty}(\Om;\R^d)$ with $\boldzeta\cdot \n =0$ on $\boundary{\Om}$ and $v^0\in\HH^2(\Om)$ with $\nabla v^0\cdot\n=0$ on $\boundary{\Om}$ and $0\leq v_0\ (\leq M<+\infty)$. Then the nonlinear drift-diffusion equation~\eqref{eq:viscousPotentialEquation} has a unique strong solution
	\begin{equation}\label{eq:driftDiffusionRegularity}
		v\in\Hloc^2([0,\infty);(\HH^1(\Om))^*)\cap\Hloc^1([0,\infty);\HH^1(\Om))\cap\CC([0,\infty);\HH^2(\Om)).
	\end{equation}
	The weak maximum principle ensures $0\leq v_t\ (\leq M)$ for every $t\in[0,\infty)$, and the regularity~\eqref{eq:driftDiffusionRegularity} immediately implies $\pa_tv,\divv(m(v)\boldzeta)\in\CC([0,\infty);\LL^2(\Om))$.
\end{proposition}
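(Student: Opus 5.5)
The statement to prove is the drift-diffusion well-posedness result, Proposition~\ref{pr:driftDiffusion}. I would treat \eqref{eq:viscousPotentialEquation} as a quasilinear parabolic equation
\[\pa_tv-\ve\Delta v=\divv(m(v)\boldzeta)\]
with Lipschitz nonlinearity. By \eqref{eq:mobilityLipschitzConditions}, $\norm{m'}_\infty<\infty$. First I extend $m$ to a globally Lipschitz, bounded-slope function $\bar m$ on $\R$, setting $\bar m=0$ outside $\rangeM$; this is continuous because $m(0)=0$ and, if $M<\infty$, $m(M)=0$. Existence of a weak solution $v\in\LL^2_{\mathrm{loc}}(\HH^1)\cap\HH^1_{\mathrm{loc}}((\HH^1)^*)$ then follows by a Galerkin scheme (or Banach fixed point in $\CC([0,T];\LL^2)$ on short intervals, iterated). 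The drift term satisfies
\[\norm{\bar m(v)\boldzeta}_{\LL^2}\le\norm{\boldzeta}_\infty\norm{m'}_\infty\norm{v}_{\LL^2},\]
so the standard energy estimate closes with Gronwall. Uniqueness follows by testing the difference of two solutions with itself: the drift contribution is bounded by $\norm{\boldzeta}_\infty\mathrm{Lip}(\bar m)\norm{v_1-v_2}\norm{\nabla(v_1-v_2)}$, which Young absorbs into the $\ve$-dissipation.

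Next comes the weak maximum principle. I test with $v_-=\max(-v,0)$. Since $\bar m(v)=0$ on $\{v<0\}$, the drift term $\int\bar m(v)\boldzeta\cdot\nabla v_-\dd x$ vanishes, giving $\ddd{t}\norm{v_-}_{\LL^2}^2\le0$, hence $v\ge0$. Analogously, if $M<\infty$, testing with $(v-M)_+$ gives $v\le M$. Consequently $\bar m(v)=m(v)$ and $v$ solves the original equation.

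For the higher regularity I would differentiate in time formally, rigorously via difference quotients or at the Galerkin level. Set $w:=\pa_tv$; it solves the linear equation
\[\pa_tw-\ve\Delta w=\divv(m'(v)w\boldzeta).\]
The coefficient $m'(v)\boldzeta$ is bounded, and the initial datum is $w(0)=\ve\Delta v^0+\divv(m(v^0)\boldzeta)\in\LL^2$. This is where $v^0\in\HHn^2$ and $\boldzeta\in\WW^{1,\infty}$ enter: $\divv(m(v^0)\boldzeta)=m'(v^0)\nabla v^0\cdot\boldzeta+m(v^0)\divv\boldzeta\in\LL^2$. The same energy argument then yields
\[w\in\LL^\infty(\LL^2)\cap\LL^2(\HH^1),\qquad \pa_tw\in\LL^2((\HH^1)^*),\]
which gives $v\in\HH^1_{\mathrm{loc}}(\HH^1)\cap\HH^2_{\mathrm{loc}}((\HH^1)^*)$ and, by Lions--Magenes, $\pa_tv\in\CC([0,\infty);\LL^2)$. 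Finally, elliptic regularity for the Neumann problem on convex or $\CC^{1,1}$ domains, applied for each $t$ to
\[\ve\Delta v=\pa_tv-\divv(m(v)\boldzeta)\in\LL^2,\]
gives $v\in\CC([0,\infty);\HH^2)$ via \eqref{eq:H2DeltaEstimate}. A bootstrap is needed here, since $\divv(m(v)\boldzeta)\in\LL^2$ requires only $v\in\HH^1$ and $m$ Lipschitz. Continuity of $\divv(m(v)\boldzeta)$ in $\LL^2$ then follows from $v\in\CC(\HH^1)$ together with Lipschitz $m$. The nonlinear term $m'(v)\nabla v\cdot\boldzeta$ is continuous in $\LL^2$ by dominated convergence; this needs $m'$ continuous a.e. along $v$, which holds on $(0,M)$, whereas at the degenerate values $\nabla v=0$ a.e. on level sets.

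I expect the main obstacle to be this time-derivative step: justifying the differentiated equation and the compatibility of the initial datum. I would first carry it out at the Galerkin level using the Neumann eigenbasis, where the estimate for $\pa_tv_N(0)$ needs the projection of $v^0$ to be bounded in $\HH^2$ (true for the eigenbasis projection of $\HHn^2$ data). I would then pass to the limit using lower semicontinuity of the norms.
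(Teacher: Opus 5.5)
Your proposal is correct and is the kind of standard parabolic argument the paper appeals to; the paper gives no proof of Proposition~\ref{pr:driftDiffusion} beyond the remark that it follows from standard parabolic theory. Your sketch fills in exactly those omitted details: extension of $m$ by zero to a globally Lipschitz $\bar m$, Galerkin or fixed-point existence and uniqueness, truncation tests with $v_-$ and $(v-M)_+$ for the bounds, the energy estimate for $\pa_t v$ with $\pa_tv(0)=\ve\Delta v^0+\divv(m(v^0)\boldzeta)\in\LL^2(\Om)$, and Neumann elliptic regularity for $v\in\CC([0,\infty);\HH^2(\Om))$. The only extra care is that $\bar m$ is merely Lipschitz at $0$ and $M$, so the time-differentiated Galerkin system holds only a.e.\ in time (or one mollifies $m$), which does not affect the estimates.
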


\begin{lemma}[Strong maximum principle]\label{le:strongMaximumPrinciple}
	Let $\Om\subset\R^d$ be a bounded domain with a $\CC^l$-boundary for some $l\in\N$ satisfying
	\begin{equation}\label{eq:BoundaryRegularityMaximumPrinciple}
		l>\frac{(d+2)^2+4}{2(d+2)}=2+\frac{d^2}{2(d+2)}.
	\end{equation}
	Let $v$ be the solution to~\eqref{eq:heatEquation} for initial data $v^0\not\equiv\text{constant}$. Then $v\in\CC^{0,1}((0,\infty)\times\varcl{\Om})$ (continuous in time, continuously differentiable in space once), and for every $t>0$
	\[\essinf_{x\in\Om}v^0(x)<\min_{x\in\varcl{\Om}}v(t,x)\leq\max_{x\in\varcl{\Om}}v(t,x)<\esssup_{x\in\Om}v^0(x).\]
\end{lemma}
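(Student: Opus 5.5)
The plan has two parts: first establish enough regularity of $v$ for $t>0$ to apply classical pointwise arguments, then combine the weak maximum principle, the classical parabolic strong maximum principle and the Hopf boundary lemma to obtain the strict inequalities.

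\textbf{Regularity.} I would use the spectral (semigroup) description of the Neumann heat flow. Let $L:=-\Delta$ with homogeneous Neumann conditions, a self-adjoint nonnegative operator on $\LL^2(\Om)$. Then $v(t)=e^{-tL}v^0$, and for every $k\in\N$ and $t>0$ we have $\norm{L^k v(t)}_{\LL^2(\Om)}\lesssim t^{-k}\norm{v^0}_{\LL^2(\Om)}$. The same bound holds for time derivatives, since $\pa_t^j v=(-L)^j v$. Elliptic regularity for the Neumann problem on a $\CC^l$-domain shows that $\dom(L^{k})$ embeds into $\HH^{2k}(\Om)$ as long as $2k\le l$. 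Interpolating between these spaces, and using the fractional-power domains for non-integer orders up to the maximal order permitted by the boundary regularity, gives $v\in\CC^\infty((0,\infty);\HH^{s}(\Om))$ for every $s\le l$.

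The role of \eqref{eq:BoundaryRegularityMaximumPrinciple} is to guarantee an admissible Sobolev order $s$ with $\HH^s(\Om)\hookrightarrow\CC^1(\varcl{\Om})$, i.e.\ $s>1+d/2$. I would first try to get this directly from the condition on $l$. If the direct count is not sharp enough, I would instead go through the parabolic space $\WW^{2,1}_p$ with $p>d+2$, bootstrapping in $\LL^p$ with the exponents chosen by interpolation; that route is where the specific threshold $2+\frac{d^2}{2(d+2)}$ should come from. Either way, the conclusion is $v\in\CC^{0,1}((0,\infty)\times\varcl{\Om})$. Interior smoothness of $v$ (so that $v$ is a classical $\CC^{2,1}$ solution inside $\Om$) is standard and needs no boundary regularity.

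\textbf{Non-strict bounds.} The weak maximum principle, obtained for instance by testing the equation with $(v-\esssup v^0)_+$ and with $(\essinf v^0-v)_+$, gives
\[
\essinf v^0\le v(t,x)\le\esssup v^0\qquad\text{for all } t>0,\ x\in\varcl{\Om}.
\]

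\textbf{Strictness.} Suppose, for contradiction, that $\min_{\varcl{\Om}}v(t_0,\cdot)=\essinf v^0=:a$ for some $t_0>0$, attained at a point $x_0$.
\begin{enumerate}
\item If $x_0\in\Om$, the classical strong maximum principle (Nirenberg) gives $v\equiv a$ on $(0,t_0]\times\Om$.
\item If $x_0\in\boundary{\Om}$ and $v(t_0,\cdot)$ is not constant near $x_0$, the parabolic Hopf lemma would force $\nabla v\cdot\n<0$ at $(t_0,x_0)$, contradicting the Neumann condition. The Hopf lemma applies because a $\CC^2$ boundary satisfies the interior ball condition, and because the $\CC^1$-up-to-the-boundary regularity from the first part makes the normal derivative pointwise meaningful. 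So again $v\equiv a$ on $(0,t_0]\times\varcl{\Om}$.
\end{enumerate}
In both cases $v(s)\to v^0$ in $\LL^2(\Om)$ as $s\searrow0$ then gives $v^0\equiv a$, contradicting $v^0\not\equiv$ constant. (Equivalently, mass conservation forces $a=\frakm$, i.e.\ $v^0$ equals its own essential infimum, which is impossible for non-constant $v^0$.) The maximum is handled symmetrically by applying the same argument to $-v$.

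\textbf{Main obstacle.} I expect the difficulty to lie entirely in the regularity step: matching the limited boundary smoothness $\CC^l$ to a $\CC^1(\varcl{\Om})$ embedding through the right fractional or $\LL^p$ exponents, so as to land exactly on condition \eqref{eq:BoundaryRegularityMaximumPrinciple}. The maximum-principle part is classical once $\CC^1$-regularity up to the boundary is available.
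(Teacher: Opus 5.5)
Your proposal is correct, but it departs from the paper in both halves. The paper also starts from $v\in\CC^\infty((0,\infty);\HH^l(\Om))$. It then uses only $\HH^{l-2}(\Om)\hookrightarrow\LL^{d+2+\delta}(\Om)$ to place $v$ in $\Wloc^{1,2;q}$ with $q>d+2$; as you guessed, this is exactly where the threshold $2+\frac{d^2}{2(d+2)}$ comes from. It then cites an anisotropic embedding (Besov--Il'in--Nikol'skii) for the $\CC^{0,1}$-regularity and a ready-made strong maximum principle for such strong solutions (Quittner--Souplet), so no Hopf argument is carried out.

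Your hedge in the regularity step is unnecessary. The hypothesis reads $l>1+\frac d2+\frac{2}{d+2}$, so $l>1+\frac d2$ and $\HH^l(\Om)\hookrightarrow\CC^1(\varcl{\Om})$ directly. For odd $l$, skip fractional powers and bootstrap Neumann elliptic regularity: $-\Delta v(t)=-\pa_tv(t)\in\HH^{l-2}(\Om)$ gives $v(t)\in\HH^l(\Om)$. Your route therefore shows that the $\CC^1$-part needs only $l>1+d/2$, and it proves strictness from first principles (weak maximum principle, Nirenberg, parabolic Hopf). The paper's route is shorter because it outsources that work to a theorem whose hypotheses are phrased in terms of $\WW^{1,2;q}$-regularity.

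One step should be tightened. The Hopf lemma needs $v>a$ in the interior of a space-time ball touching $(t_0,x_0)$, for example the ball centred at $(t_0,y)$ with $B_R(y)\subset\Om$ tangent to $\boundary{\Om}$ at $x_0$ and $R<t_0$. Mere non-constancy of $v(t_0,\cdot)$ near $x_0$ is not enough. Organise the dichotomy as follows. Either $v(t_1,x_1)=a$ for some $x_1\in\Om$ and $t_1>0$; then Nirenberg gives $v\equiv a$ on $(0,t_1]\times\Om$, hence $v^0\equiv a$. Or $v>a$ on all of $(0,\infty)\times\Om$; then Hopf yields $\nabla v(t_0,x_0)\cdot\n<0$, contradicting the Neumann condition, which holds pointwise thanks to the $\CC^1(\varcl{\Om})$-regularity.
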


\begin{proof}
	The claim follows from \cite[Appendix F, Proposition 52.7 with (52.11)]{QuittnerSouplet2019} and \cite[Theorem 10.4]{BesovIlinNikolskii1978} once we have shown $u\in\Wloc^{1,2;q}((0,T]\times\varcl{\Om})\cap\Lloc^\infty((0,T]\times\varcl{\Om})$ for every $T>0$ and some $q>d+2$, where the $\Wloc^{1,2;q}$-space denotes the set of functions that are weakly differentiable in time, twice weakly differentiable in space, with all derivatives belonging to $\Lloc^q((0,T]\times\varcl{\Om})$.

	The regularity assumption on $\boundary{\Om}$ guarantees $v\in\CC^\infty((0,\infty);\HH^l(\Om))$. In particular, $\pa_{x_ix_j}^2u\in\CC^\infty([\tau,\infty);\HH^{l-2}(\Om))$ for every $1\leq i,j\leq d$. Due to~\eqref{eq:BoundaryRegularityMaximumPrinciple}, $\HH^{l-2}(\Om)$ embeds into $\LL^{d+2+\delta}(\Om)$ for some $\delta>0$. Indeed, this is clear if $d\leq2(l-2)$, and whenever $d>2(l-2)$, condition~\eqref{eq:BoundaryRegularityMaximumPrinciple} is equivalent to $d+2<\frac{2d}{d-2(l-2)}$. Moreover, from
	\[l>\frac{(d+2)^2+4}{2(d+2)}=1+\frac{d}{2}+\frac{2}{d+2}>1+\frac{d}{2}\]
	it follows $d<2(l-1)$, i.e.\ $\HH^{l-1}(\Om)\hookrightarrow\LL^\infty(\Om)$. Thus, $u\in\Lloc^\infty((0,\infty)\times\varcl{\Om})$.
\end{proof}

\subsection{Functional analytic results and Sobolev inequalities}

\begin{proposition}[Ascoli--Arzel\`a, weakly compact version, cf.\ {\cite[Lemma C.1]{Lions1996}}]\label{pr:ascoliWeakVersion}
	Let $X$ be a reflexive Banach space, $X^*$ its dual, $Y\subseteq X^*$ a countable dense subset and $I\subset\R$ a compact interval. With $\CC_w(I;X)$ we denote the set of all weakly continuous functions, i.e.
	\[\CC_w(I;X):=\bigset{f:I\to X}{f(t_n)\weakto f(t)\text{ in }X\text{ whenever }t_n\to t}.\]
	Assume a subset $\mathcal{A}\subset\CC_w(I;X)$ satisfies the following two conditions:
	\begin{enumerate}
		\item $\sup_{f\in\mathcal{A},t\in I}\norm{f(t)}_X<\infty$;
		\item for every $\varphi\in Y$, the family $\{I\to\R,t\mapsto\product{\varphi}{f(t)}_{X^*,X}\}_{f\in A}$ is equicontinuous.
	\end{enumerate}
	Then $\mathcal{A}$ is relatively weakly compact, i.e.\ any sequence $(f_n)_{n\in\N}\subset\mathcal{A}$ admits a subsequence and a limit $f\in\CC_w(I;X)$ such that $\product{\varphi}{f_{n_k}(\cdot)}\to\product{\varphi}{f(\cdot)}$ uniformly on $I$ for every $\varphi\in X^*$.
\end{proposition}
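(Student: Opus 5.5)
The plan is to prove this weak Ascoli--Arzel\`a statement by the classical diagonal argument. Separability enters through the countable dense set $Y\subseteq X^*$, and reflexivity of $X$ is used to identify pointwise limits. Fix a sequence $(f_n)\subset\mathcal A$ and set $R:=\sup_{f\in\mathcal A,t\in I}\norm{f(t)}_X<\infty$. Let $Q:=I\cap\mathbb Q$ together with the endpoints of $I$; this is a countable dense subset of $I$.

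\textbf{Step 1 (convergence on a countable set).} For every $t\in Q$, the sequence $(f_n(t))$ is bounded by $R$ in the reflexive space $X$. By Eberlein--\v{S}mulian it therefore has a weakly convergent subsequence. Enumerating $Q$ and extracting successive subsequences, a Cantor diagonal argument produces a single subsequence $(f_{n_k})$ such that $f_{n_k}(t)\weakto g(t)$ in $X$ for every $t\in Q$, with $\norm{g(t)}_X\le R$.

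\textbf{Step 2 (uniform Cauchy property on $I$ for test functionals in $Y$).} Fix $\varphi\in Y$ and $\ve>0$. By equicontinuity, which on the compact interval $I$ is uniform, there is $\delta>0$ with $\abs{\product{\varphi}{f(t)-f(s)}}<\ve$ for all $f\in\mathcal A$ and all $\abs{t-s}<\delta$. Cover $I$ by finitely many points $t_1,\dots,t_N\in Q$ forming a $\delta$-net. For every $t\in I$ pick $t_i$ with $\abs{t-t_i}<\delta$; then
\[\abs{\product{\varphi}{f_{n_k}(t)-f_{n_j}(t)}}\le 2\ve+\max_i\abs{\product{\varphi}{f_{n_k}(t_i)-f_{n_j}(t_i)}},\]
and the last term is smaller than $\ve$ for $k,j$ large by Step 1. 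Hence $\product{\varphi}{f_{n_k}(\cdot)}$ is uniformly Cauchy on $I$ for each $\varphi\in Y$. Next, extend this to arbitrary $\psi\in X^*$ by density. Choose $\varphi\in Y$ with $\norm{\psi-\varphi}_{X^*}<\ve/R$; the uniform bound $R$ gives
\[\sup_{t}\abs{\product{\psi-\varphi}{f_{n_k}(t)}}\le\ve.\]
So $\product{\psi}{f_{n_k}(\cdot)}$ is uniformly Cauchy on $I$ for every $\psi\in X^*$.

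\textbf{Step 3 (identification of the limit and weak continuity).} For each $t\in I$, the sequence $(f_{n_k}(t))$ is bounded and weakly Cauchy. Every weakly convergent subsequence has the same limit, because the limit is determined by the values $\lim_k\product{\psi}{f_{n_k}(t)}$. Reflexivity ensures such subsequences exist, so $f_{n_k}(t)\weakto f(t)$ for a unique $f(t)\in X$, and by Step 2 the convergence $\product{\psi}{f_{n_k}(\cdot)}\to\product{\psi}{f(\cdot)}$ is uniform on $I$ for every $\psi\in X^*$. Each function $t\mapsto\product{\psi}{f_{n_k}(t)}$ is continuous because $f_{n_k}\in\CC_w(I;X)$, so the uniform limit $t\mapsto\product{\psi}{f(t)}$ is continuous, i.e.\ $f\in\CC_w(I;X)$.

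The only delicate point is the passage from equicontinuity tested against $Y$ to uniform convergence tested against all of $X^*$. This hinges on the uniform bound (i) combined with density; everything else is routine.
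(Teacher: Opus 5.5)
Your argument is correct and complete. It runs by diagonal extraction on a countable dense set of times, then uses uniform equicontinuity on the compact interval together with density of $Y$ and the uniform bound to get uniform Cauchy convergence against every $\psi\in X^*$, and finally uses reflexivity to identify the pointwise weak limit. The paper gives no proof of its own and defers to \cite[Lemma C.1]{Lions1996}, whose proof is essentially this same standard argument.
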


For the next lemmas, suppose $\Om\subset\R^d$ is a bounded domain, being convex or having a $\CC^{1,1}$-boundary.

\begin{lemma}
	There exists a constant $C_\Delta\in(0,\infty)$ only depending on $\Om$ such that
	\begin{equation}\label{eq:H2DeltaEstimate}
		\biggnorm{u-\avint{_\Om}u\dd x}_{\HH^2(\Om)}\leq C_\Delta\norm{\Delta u}_{\LL^2(\Om)}
	\end{equation}
	for all $u\in\HHn^2(\Om)$.
\end{lemma}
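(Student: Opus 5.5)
The plan is to reduce the estimate to the case $\int_\Om u\dd x=0$ and then combine elliptic regularity for the Neumann Laplacian with a Poincar\'e-type bound. Given $u\in\HHn^2(\Om)$, set $w:=u-\avint{_\Om}u\dd x$. Then $w\in\HHn^2(\Om)$, $\Delta w=\Delta u$, and $w$ has zero mean, so it suffices to prove $\norm{w}_{\HH^2(\Om)}\leq C_\Delta\norm{\Delta w}_{\LL^2(\Om)}$ for mean-free $w\in\HHn^2(\Om)$.

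\textbf{Lower-order part.} Put $g:=\Delta w\in\LL^2(\Om)$. Integrating by parts, which is justified because $\nabla w\cdot\n=0$ on $\boundary{\Om}$, gives
\[\int_\Om\abs{\nabla w}^2\dd x=-\int_\Om w\,\Delta w\dd x\leq\norm{w}_{\LL^2(\Om)}\norm{\Delta w}_{\LL^2(\Om)}.\]
Since $w$ has zero mean, the Poincar\'e--Wirtinger inequality on the bounded connected domain gives $\norm{w}_{\LL^2(\Om)}\leq C_P\norm{\nabla w}_{\LL^2(\Om)}$. Together these yield $\norm{\nabla w}_{\LL^2(\Om)}\leq C_P\norm{\Delta w}_{\LL^2(\Om)}$ and hence $\norm{w}_{\HH^1(\Om)}\lesssim\norm{\Delta w}_{\LL^2(\Om)}$.

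\textbf{Second derivatives.} This is the main step. I would use $\HH^2$-regularity for the Neumann problem
\[-\Delta w+w=w-g\ \text{in }\Om,\qquad \nabla w\cdot\n=0\ \text{on }\boundary{\Om}.\]
\begin{itemize}
\item If $\Om$ has a $\CC^{1,1}$-boundary, the standard a priori estimate for this uniformly elliptic problem gives $\norm{w}_{\HH^2(\Om)}\leq C\norm{w-g}_{\LL^2(\Om)}$.
\item If $\Om$ is only convex, I would instead rely on Grisvard's result. For convex domains and $w\in\HHn^2(\Om)$ the boundary term in the Miranda--Talenti/Kadlec identity has a sign, so that $\norm{D^2w}_{\LL^2(\Om)}\leq\norm{\Delta w}_{\LL^2(\Om)}$. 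This is proved by approximating $\Om$ by smooth convex domains, whose boundary curvature term is nonnegative.
\end{itemize}
In either case $\norm{D^2w}_{\LL^2(\Om)}\lesssim\norm{\Delta w}_{\LL^2(\Om)}+\norm{w}_{\LL^2(\Om)}$, and the previous step then bounds the right-hand side by a multiple of $\norm{\Delta w}_{\LL^2(\Om)}$.

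Combining the bounds on $\norm{w}_{\HH^1(\Om)}$ and $\norm{D^2w}_{\LL^2(\Om)}$ gives the claim with a constant depending only on $\Om$.

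The only real obstacle is the convex, non-smooth case. There one must either cite Grisvard's theorem (Theorem~3.2.1.3 / 3.1.3.3 of \emph{Elliptic Problems in Nonsmooth Domains}) or carry out the approximation argument. In that argument, the $\HH^2$ estimates on the approximating smooth convex domains are uniform, and one passes to the limit with the Neumann condition preserved. I would cite the result rather than reprove it.
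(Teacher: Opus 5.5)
Your proof is correct and follows essentially the same route as the paper. The paper likewise combines the Neumann elliptic a priori estimate $\|u\|_{H^2(\Omega)}\le C(\|\Delta u\|_{L^2(\Omega)}+\|u\|_{L^2(\Omega)})$, citing Grisvard for the convex case, with the chain $\|u-\bar u\|_{L^2(\Omega)}\le C_{PW}\|\nabla u\|_{L^2(\Omega)}\le C_{PW}^2\|\Delta u\|_{L^2(\Omega)}$; your integration-by-parts and Poincar\'e--Wirtinger step makes the second inequality of that chain explicit.
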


This is a consequence of the a priori estimate from elliptic regularity (e.g.\ \cite[Chapters II, III]{Grisvard2011}) $\norm{u}_{\HH^2(\Om)}\leq C(\norm{\Delta u}_{\LL^2(\Om)}+\norm{u}_{\LL^2(\Om)})$ and Poincar\'e-Wirtinger's inequality $\norm{u-\avint{_\Om}u\dd x}_{\LL^2(\Om)}\leq\CPW\norm{\nabla u}_{\LL^2(\Om;\R^d)}\leq\CPW^2\norm{\Delta u}_{\LL^2(\Om)}$.

\begin{lemma}\label{le:H1L2H2Interpolation}
	Every $v\in\HHn^2(\Om)$ satisfies $\norm{v}_{\HH^1(\Om)}\leq\sqrt[4]{d{+}1}\norm{v}_{\LL^2(\Om)}^{1/2}\norm{v}_{\HH^2(\Om)}^{1/2}$.
\end{lemma}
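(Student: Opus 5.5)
The plan is to interpolate pointwise in $x$ via an integration by parts that exploits the Neumann condition, and then to bound the $\LL^2$-norm of the gradient and the $\LL^2$-norm of the function separately.

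\textbf{Step 1 (gradient term).} Since $\nabla v\cdot\n=0$ on $\boundary{\Om}$, the divergence theorem gives
\[\int_\Om\abs{\nabla v}^2\dd x=-\int_\Om v\,\Delta v\dd x\leq\norm{v}_{\LL^2(\Om)}\norm{\Delta v}_{\LL^2(\Om)}.\]
The boundary term $\int_{\boundary{\Om}}v\,\nabla v\cdot\n\dd\sigma$ vanishes. This is rigorous for $v\in\HH^2(\Om)$ because the trace of $\nabla v$ lies in $\LL^2(\boundary{\Om})$, and one may justify the identity by density or directly via Green's formula.

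\textbf{Step 2 (Laplacian versus the full $\HH^2$-norm).} Pointwise Cauchy--Schwarz gives $\abs{\Delta v}^2=\bigl(\sum_{i=1}^d\pa_{ii}v\bigr)^2\leq d\sum_{i}\abs{\pa_{ii}v}^2\leq d\abs{\nabla^2v}^2$. Hence $\norm{\Delta v}_{\LL^2(\Om)}\leq\sqrt d\,\norm{\nabla^2 v}_{\LL^2(\Om)}\leq\sqrt d\,\norm{v}_{\HH^2(\Om)}$.

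\textbf{Step 3 (combine).} The $\LL^2$-part is trivial, since $\norm{v}_{\LL^2(\Om)}^2\leq\norm{v}_{\LL^2(\Om)}\norm{v}_{\HH^2(\Om)}$. Adding the two bounds yields
\[\norm{v}_{\HH^1(\Om)}^2\leq\norm{v}_{\LL^2(\Om)}\norm{v}_{\HH^2(\Om)}+\sqrt d\,\norm{v}_{\LL^2(\Om)}\norm{v}_{\HH^2(\Om)}=(1+\sqrt d)\,\norm{v}_{\LL^2(\Om)}\norm{v}_{\HH^2(\Om)}.\]
By concavity of the square root, $1+\sqrt d\leq\sqrt 2\,\sqrt{d+1}$, which only gives a constant of $\sqrt2\sqrt{d+1}$ on the squared norm. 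To reach the stated $\sqrt{d+1}$, I would instead bound the two terms jointly with a single Cauchy--Schwarz inequality:
\[\norm{v}_{\LL^2}^2+\norm{\nabla v}_{\LL^2}^2\leq\norm{v}_{\LL^2}\bigl(\norm{v}_{\LL^2}+\norm{\Delta v}_{\LL^2}\bigr)\leq\norm{v}_{\LL^2}\sqrt2\bigl(\norm{v}_{\LL^2}^2+d\norm{\nabla^2v}_{\LL^2}^2\bigr)^{1/2}.\]
This still leaves a factor $\sqrt{2}$. Bounding the bracket by $(d+1)^{1/2}\norm{v}_{\HH^2(\Om)}$ through the vector Cauchy--Schwarz inequality $a+b\leq\sqrt{1+d}\,(a^2+b^2/d)^{1/2}$ removes it. Here $a=\norm{v}_{\LL^2}$ and $b=\norm{\Delta v}_{\LL^2}$, and $b^2/d\leq\norm{\nabla^2v}_{\LL^2}^2$, so that $a^2+b^2/d\leq\norm{v}_{\HH^2}^2$. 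This gives exactly
\[\norm{v}_{\HH^1}^2\leq\sqrt{d+1}\,\norm{v}_{\LL^2}\norm{v}_{\HH^2},\]
and taking square roots yields the claim with constant $\sqrt[4]{d+1}$.

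The only delicate point is the justification of the integration by parts in Step 1 on a merely convex (Lipschitz) domain. I would handle it with the Green formula for $\HH^2$-functions on Lipschitz domains, where the normal trace is well defined.
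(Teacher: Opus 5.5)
Your argument is correct and is essentially the paper's proof: both integrate by parts with the Neumann condition to get $\|v\|_{H^1}^2=\int_\Omega v(v-\Delta v)\,\mathrm{d}x\le\|v\|_{L^2}\bigl(\|v\|_{L^2}+\|\Delta v\|_{L^2}\bigr)$, and then apply a Cauchy--Schwarz inequality over $d+1$ terms to obtain the factor $\sqrt{d+1}$. The paper first splits $\|\Delta v\|_{L^2}\le\sum_i\|\partial_{ii}v\|_{L^2}$ and applies Cauchy--Schwarz to $\|v\|_{L^2}+\sum_i\|\partial_{ii}v\|_{L^2}$, which is equivalent to your weighted two-term version; your intermediate attempts with the constants $1+\sqrt d$ and $\sqrt2$ can simply be dropped.
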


\begin{proof}
	By an easy calculation that uses integration-by-parts and the boundary condition we find
	\begin{align*}
		\norm{v}_{\HH^1(\Om)}^2&=\int_\Om\bigparen{v^2+\nabla v\cdot\nabla v}\dd x=\int_\Om v(v-\Delta v)\dd x\\
		&\leq\norm{v}_{\LL^2(\Om)}\biggparen{\norm{v}_{\LL^2(\Om)}+\sum_{i=1}^d\norm{\pa_{x_ix_i}^2v}_{\LL^2(\Om)}}\\
		&\leq\sqrt{d+1}\norm{v}_{\LL^2(\Om)}\biggparen{\norm{v}_{\LL^2(\Om)}^2+\sum_{i=1}^d\norm{\pa_{x_ix_i}^2v}_{\LL^2(\Om)}^2}^{1/2}
	\end{align*}
\end{proof}

\printbibliography[heading=bibintoc]
\end{document}